\documentclass{amsart}
\usepackage{amsthm,amssymb,mathtools}
\usepackage[british]{babel}
\usepackage{enumitem}
\usepackage[latin1]{inputenc}
\usepackage{bussproofs}
\usepackage{url}
\usepackage{hyperref}
\usepackage[hang,flushmargin]{footmisc}

\newtheorem{theorem}{Theorem}
\numberwithin{theorem}{section}
\newtheorem{corollary}[theorem]{Corollary}
\newtheorem{lemma}[theorem]{Lemma}
\newtheorem{proposition}[theorem]{Proposition}
\theoremstyle{definition}
\newtheorem{definition}[theorem]{Definition}

\newenvironment{bprooftree}
  {\leavevmode\hbox\bgroup}
  {\DisplayProof\egroup}

\newcommand{\con}{\operatorname{Con}}
\newcommand{\pro}{\operatorname{Proof}}
\newcommand{\true}{\operatorname{Tr}}
\newcommand{\pr}{\operatorname{Pr}}

\newcommand{\dcon}{\operatorname{Con}^\diamond}
\newcommand{\pa}{\mathbf{PA}}
\newcommand{\aca}{\mathbf{ACA}_0}
\newcommand{\ea}{\mathbf{EA}}
\newcommand{\feps}{F_{\varepsilon_0}}
\newcommand{\isigma}{\mathbf{I\Sigma}}
\newcommand{\drfn}{\operatorname{RFN}_{\mathbf{PA}}^\diamond}
\newcommand{\prfn}{{\operatorname{Rfn}}}
\newcommand{\rfn}{\operatorname{RFN}}
\newcommand{\ti}{\operatorname{TI}}
\newcommand{\prog}{\operatorname{Prog}}
\newcommand{\tis}{\operatorname{SeqTI}}
\newcommand{\progs}{\operatorname{SeqProg}}
\newcommand{\len}{\operatorname{len}}

\makeatletter
\newcommand{\dotminus}{\mathbin{\text{\@dotminus}}}
\newcommand{\@dotminus}{%
  \ooalign{\hidewidth\raise1ex\hbox{.}\hidewidth\cr$\m@th-$\cr}%
}
\makeatother

\title[Slow reflection]{Slow reflection\footnotemark[1]}
\author{Anton Freund}

\begin{document}

\begin{abstract}
We describe a ``slow'' version of the hierarchy of uniform reflection principles over Peano Arithmetic ($\pa$). These principles are unprovable in Peano Arithmetic (even when extended by usual reflection principles of lower complexity) and introduce a new provably total function. At the same time the consistency of $\pa$ plus slow reflection is provable in $\pa+\con(\pa)$. We deduce a conjecture of \mbox{S.-D.}~Friedman, Rathjen and Weiermann: Transfinite iterations of slow consistency generate a hierarchy of precisely $\varepsilon_0$ stages between $\pa$ and $\pa+\con(\pa)$ (where $\con(\pa)$ refers to the usual consistency statement).
\end{abstract}

\maketitle
{\let\thefootnote\relax\footnotetext{\copyright~2017. This manuscript version is made available under the CC-BY-NC-ND 4.0 license \url{http://creativecommons.org/licenses/by-nc-nd/4.0/}. The paper has been accepted for publication in the Annals of Pure and Applied Logic (doi 10.1016/j.apal.2017.06.003).}}

The starting point for our work is the notion of slow consistency for (finite extensions of) Peano Arithmetic that has been introduced by Sy-David Friedman, Michael Rathjen and Andreas Weiermann in \cite{rathjen13}. Up to an ``index shift'' (see below) it is defined as
\begin{equation}\label{eq:def-slow-consistency}
 \dcon(\pa+\varphi):\equiv\forall_x(\feps(x)\!\downarrow\,\rightarrow\con(\isigma_{x+1}+\varphi)).
\end{equation}
This formula involves the function $\feps$ at stage $\varepsilon_0$ of the fast-growing hierarchy, due to Wainer and Schwichtenberg \cite{wainer70,schwichtenberg71}. We work with the version used by Sommer \cite{sommer95}: Adopting his assignment of fundamental sequences $\lambda=\sup_{x\in\omega}\{\lambda\}(x)$ to limit ordinals $\lambda\leq\varepsilon_0$ (in particular $\{\varepsilon_0\}(x)=\omega_{x+1}$ is a tower of $x+1$ exponentials with base $\omega$) we define $F_\alpha$ by recursion on $\alpha\leq\varepsilon_0$, setting
\begin{align*}
F_0(x) &:= x+1,\\
F_{\alpha+1}(x) &:= F_\alpha^{x+1}(x),\\
F_\lambda(x) &:= F_{\{\lambda\}(x)}(x)\quad\text{for $\lambda$ a limit ordinal}.
\end{align*}
To conceive of $\dcon(\pa+\varphi)$ as an arithmetic formula (of complexity $\Pi_1$), recall that ordinals below $\varepsilon_0$ can be represented via their Cantor normal forms. We adopt the efficient encoding of \cite{sommer95}. Building on this one can arithmetize the fast-growing hierarchy: Sommer in \cite[Section 5.2]{sommer95} constructs a $\Delta_0$-formula $F_\alpha(x)=y$ which defines the graphs of the functions $F_\alpha$ for $\alpha\leq\varepsilon_0$ (cf.\ \cite[Equation 4]{freund-proof-length} for the case $\alpha=\varepsilon_0$). Basic relations between these functions become provable in $\isigma_1$. As usual $F_\alpha(x)\!\downarrow$ abbreviates $\exists_y F_\alpha(x)=y$. In addition, the formula $\dcon(\pa+\varphi)$ depends on a formula $\pro_{\isigma_x}(p,\varphi)$ which is $\Delta_1$ in $\isigma_1$ and arithmetizes the ternary relation ``$p$ is a proof of $\varphi$ in the theory $\isigma_x$''. Here $\isigma_x$ denotes the fragment of Peano Arithmetic in which induction is only available for $\Sigma_x$-formulas.\\
It is a classical result, due to Kreisel, Wainer and Schwichtenberg \cite{kreisel52,wainer70,schwichtenberg71}, that Peano Arithmetic does not prove $\forall_x\feps(x)\!\downarrow$. This opens up the possibility that $\dcon$ is strictly weaker than the usual consistency statement. Friedman, Rathjen and Weiermann in \cite{rathjen13} prove that this possibility materializes: Indeed, by \cite[Section 4]{rathjen13} finite iterations of slow consistency generate a strict hierarchy of $\omega$ theories that are stronger than Peano Arithmetic but bounded by the usual consistency statement $\con(\pa)$. It is conjectured in \cite[Remark 4.4]{rathjen13} that the same holds for a transfinite extension of the hierarchy up to any ordinal below $\varepsilon_0$. In the present paper we prove that this is the case: For an appropriate $\Pi_1$-formula $\dcon_\alpha(\pa)$ in the variable $\alpha$ we have
\begin{equation*}
 \pa\subsetneq\cdots\subsetneq\pa+\dcon_\alpha(\pa)\subsetneq\cdots\subsetneq\pa+\dcon_{\varepsilon_0}(\pa)\equiv\pa+\con(\pa).
\end{equation*}
As in \cite[Theorem 3.1]{rathjen13} this is also a strict hierarchy with respect to the interpretability ordering.\\
To prove the result about iterated slow consistency we introduce a notion of slow reflection which is interesting in its own right. As observed by Michael Rathjen in \cite{rathjen-miscellanea-slow-consistency} slow consistency can be derived from a corresponding notion of slow provability, and indeed slow proof: A slow $\pa$-proof of a formula $\varphi$ is a pair $\langle q,\feps(n)\rangle$ such that $q$ is a usual proof of $\varphi$ in the fragment $\isigma_{n+1}$. Writing $\pi_i$ for the projections of the Cantor pairing function this amounts to the formula
\begin{equation*}
 \pro_\pa^\diamond(p,\varphi):\equiv\exists_x(\pro_{\isigma_{x+1}}(\pi_1(p),\varphi)\land\feps(x)=\pi_2(p))
\end{equation*}
which is $\Delta_1$ in $\isigma_1$ (cf.\ \cite[Definition 2.1]{freund-proof-length}). Slow provability is then defined as
\begin{equation*}
 \pr_\pa^\diamond(\varphi):\equiv\exists_p\,\pro_\pa^\diamond(p,\varphi).
\end{equation*} 
Michael Rathjen shows in \cite{rathjen-miscellanea-slow-consistency} that slow provability realizes G\"odel-L\"ob provability logic (see also Lemma \ref{lem:slow-hilbert-bernays} below). It is easy to see that we have
\begin{equation}\label{eq:slow-provability-fragments}
 \isigma_1\vdash\forall_\psi(\pr_\pa^\diamond(\psi)\leftrightarrow\exists_x\,(\feps(x)\!\downarrow\!\land\,\pr_{\isigma_{x+1}}(\psi)))
\end{equation}
and then
\begin{equation*}
 \isigma_1\vdash\dcon(\pa+\varphi)\leftrightarrow\neg\pr_\pa^\diamond(\neg\varphi).
\end{equation*}
Given a notion of provability one can consider the corresponding reflection principles. We will mainly be concerned with uniform reflection. Using Feferman's dot notation, slow (uniform) reflection for the formula $\varphi\equiv\varphi(x_1,\dots ,x_k)$ is defined as
\begin{equation*}
 \drfn(\varphi):\equiv\forall_{x_1,\dots ,x_k}(\pr_\pa^\diamond(\varphi(\dot x_1,\dots ,\dot x_k))\rightarrow\varphi(x_1,\dots ,x_k)).
\end{equation*}
Taking the contrapositive yields the usual connection with iterations of consistency:
\begin{equation*}
 \pa+\drfn(\neg\dcon(\pa+\varphi))\vdash\dcon(\pa+\varphi)\rightarrow\dcon(\pa+\dcon(\pa+\dot\varphi)).
\end{equation*}
We thus need to bound the strength of slow reflection. Consider the set of formulas
\begin{equation*}
 \drfn:=\{\drfn(\varphi)\, |\, \text{$\varphi$ a formula of first-order arithmetic}\}.
\end{equation*}
The central result of this paper is the equiconsistency
\begin{equation}\label{eq:equiconsistency-slow-reflection}
 \pa\vdash\con(\pa)\rightarrow\con(\pa+\drfn).
\end{equation}
Conversely the slow reflection statements are non-trivial: Let $\true_{\Pi_n}(x)$ be the usual truth definition for $\Pi_n$-sentences. We abbreviate
\begin{equation*}
 \drfn(\Pi_n):\equiv\forall_\psi(\text{``$\psi$ a $\Pi_n$-sentence''}\land\pr_\pa^\diamond(\psi)\rightarrow\true_{\Pi_n}(\psi)).
\end{equation*}
Similarly we write $\rfn_\pa(\Pi_n)$ for the usual reflection principles over Peano Arithmetic. We will see that
\begin{equation}\label{eq:slow-reflection-hierarchy-non-trivial}
 \pa+\rfn_\pa(\Pi_{n})\nvdash\drfn(\Pi_{n+1})
\end{equation}
holds for any number $n\geq 1$. An analysis of slow reflection from a more computational viewpoint can be found in \cite[Section 3]{freund-proof-length}: In particular it is shown that $\pa+\drfn(\Pi_2)$ proves the totality of a function $F_{\varepsilon_0}^\diamond$ (a slow variant of $\feps$) which eventually dominates any provably total function of Peano Arithmetic.\\
Of course, one can also consider parameter-free (also called ``local'') slow reflection. Proposition \ref{prop:slow-goryachev} (a slow version of Goryachev's Theorem) links this principle to finite iterations of slow consistency.\\
We should also discuss the issue of index shifts: The original definition of slow consistency in \cite{rathjen13} reads
\begin{equation*}
 \con^*(\pa+\varphi):\equiv\forall_x(\feps(x)\!\downarrow\,\rightarrow\con(\isigma_x+\varphi)),
\end{equation*}
i.e.\ it has $\con(\isigma_x+\varphi)$ where our variant $\dcon(\pa+\varphi)$ demands the stronger $\con(\isigma_{x+1}+\varphi)$. Clearly, the upper bounds that we prove for $\dcon$ also hold for the weaker $\con^*$. It is easy to see that the proof which we will give for the lower bound $\pa+\dcon_{\varepsilon_0}(\pa)\vdash\con(\pa)$ does not depend on the index shift. Interestingly, the results change considerably when we shift the index in the other direction: Set
\begin{equation*}
 \pr_\pa^\dagger(\psi):\equiv\exists_x\,(\feps(x)\!\downarrow\!\land\,\pr_{\isigma_{x+2}}(\psi)))
\end{equation*}
and define $\con^\dagger$ and $\rfn_\pa^\dagger$ accordingly. We will see that $\rfn_\pa^\dagger(\Pi_n)$ is $\pa$-provably equivalent to the usual $\Pi_n$-reflection principle for Peano Arithmetic, for each $n\geq 2$. Concerning slow consistency we will get a hierarchy
\begin{equation*}
 \pa\subsetneq\cdots\subsetneq\pa+\con_n^\dagger(\pa)\subsetneq\cdots\subsetneq\pa+\con_\omega^\dagger(\pa)\equiv\pa+\con(\pa)
\end{equation*}
with only $\omega$ stages below $\con(\pa)$. This justifies that we focus on the $\diamond$-variant: It has the strongest consistency and reflection statements which are non-trivial in the described sense. We refer to \cite{freund-proof-length} for a computational view on the same phenomenon.\\
An independent investigation into slow consistency has been carried out by Paula Henk and Fedor Pakhomov \cite{henk-pakhomov} (for comparison, the first preprint of the present paper was published as arXiv:1601.08214v1). Henk and Pakhomov also prove that the usual consistency statement for Peano Arithmetic is equivalent to $\varepsilon_0$ iterations of slow consistency, and that this goes down to $\omega$ iterations after the index shift. In addition, they construct a ``square root'' consistency statement which reaches ordinary consistency in just two iterations, and they determine the joint provability logic of slow and ordinary provability. They do not consider the notion of slow uniform reflection, which is central to the present paper.

\section{Connecting Reflection and Transfinite Induction}\label{sect:reflection-transfinite-induction}

Using (\ref{eq:slow-provability-fragments}) it is easy to see that we have
\begin{equation}\label{eq:slow-reflection-reflection-fragments}
 \isigma_1\vdash\drfn(\Pi_n)\leftrightarrow\forall_x(\feps(x)\!\downarrow\,\rightarrow\rfn_{\isigma_{x+1}}(\Pi_n))
\end{equation}
for each number $n$. An analogue equivalence characterizes $\drfn(\varphi)$. Let us give a typical application of this equivalence: It is a standard consequence of the ``It's snowing''-Lemma (see \cite[Corollary I.1.76]{hajek91}) that
\begin{equation*}
 \isigma_1\vdash\forall_{x\geq 1}(\rfn_{\isigma_x}(\Pi_n)\rightarrow\rfn_{\isigma_x}(\varphi))
\end{equation*}
holds for any formula $\varphi$ which is $\Pi_n$ in $\isigma_1$. Using (\ref{eq:slow-reflection-reflection-fragments}) we can conclude
\begin{equation*}
 \isigma_1\vdash\drfn(\Pi_n)\rightarrow\drfn(\varphi).
\end{equation*}
Now claim (\ref{eq:slow-reflection-hierarchy-non-trivial}) from the introduction is easily established:

\begin{proposition}
 For any $n\geq 1$ we have
\begin{equation*}
 \pa+\rfn_\pa(\Pi_{n})\nvdash\drfn(\Pi_{n+1}).
\end{equation*}
\end{proposition}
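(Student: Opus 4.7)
The plan is to argue by contradiction. Suppose $\pa+\rfn_\pa(\Pi_n)\vdash\drfn(\Pi_{n+1})$; I aim to derive $\pa+\rfn_\pa(\Pi_n)\vdash\rfn_\pa(\Pi_{n+1})$, contradicting the classical strict hierarchy of uniform reflection principles over $\pa$ (Kreisel--L\'evy).

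The crucial ingredient is the key lemma
\begin{equation*}
 \pa+\drfn(\Pi_2)\vdash\forall x\,\feps(x)\!\downarrow,
\end{equation*}
which I would prove by arithmetic induction on $x$ inside $\pa+\drfn(\Pi_2)$. The base case $\feps(0)\!\downarrow$ is a concrete finite computation, verifiable in $\isigma_1$. For the inductive step, from $\feps(x)\!\downarrow$ and (\ref{eq:slow-reflection-reflection-fragments}) applied to $\Pi_2$ we obtain $\rfn_{\isigma_{x+1}}(\Pi_2)$, i.e.\ 1-consistency of the fragment $\isigma_{x+1}$; by the standard Wainer--Sommer alignment between fragments of $\pa$ and the fast-growing hierarchy, this is enough to establish $\forall y\,F_{\omega_{x+2}}(y)\!\downarrow$, so in particular $\feps(x+1)=F_{\omega_{x+2}}(x+1)\!\downarrow$, closing the induction. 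The index shift built into the $\diamond$-variant is tailored precisely so that $\rfn_{\isigma_{x+1}}(\Pi_2)$ lands one level above the totalities already provable in $\isigma_{x+1}$, which is exactly what the inductive step needs.

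Granting the key lemma, the proposition follows quickly. Since $\drfn(\Pi_{n+1})$ trivially implies $\drfn(\Pi_2)$, also $\pa+\drfn(\Pi_{n+1})\vdash\forall x\,\feps(x)\!\downarrow$. Feeding this totality into (\ref{eq:slow-reflection-reflection-fragments}) for $\Pi_{n+1}$ gives $\pa+\drfn(\Pi_{n+1})\vdash\forall x\,\rfn_{\isigma_{x+1}}(\Pi_{n+1})$, which is $\isigma_1$-equivalent to $\rfn_\pa(\Pi_{n+1})$ (every $\pa$-proof uses only finitely many induction axioms and hence lies in some $\isigma_k$). Composed with the hypothesis this yields $\pa+\rfn_\pa(\Pi_n)\vdash\rfn_\pa(\Pi_{n+1})$, the desired contradiction.

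The main obstacle is the arithmetization of the Wainer--Sommer correspondence invoked in the key lemma's inductive step: one needs $\pa\vdash\rfn_{\isigma_{x+1}}(\Pi_2)\rightarrow F_{\omega_{x+2}}(x+1)\!\downarrow$ uniformly in $x$, using Sommer's $\Delta_0$-definition of $F_\alpha(x)=y$. Once this is in hand the remainder is bookkeeping plus an appeal to the classical strict reflection hierarchy.
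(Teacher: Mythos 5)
Your key lemma, $\pa+\drfn(\Pi_2)\vdash\forall x\,\feps(x)\!\downarrow$, is false; in fact its failure is precisely what the $\diamond$-variant is engineered to achieve. The inductive step has an off-by-one error. From $\feps(x)\!\downarrow$ together with $\drfn(\Pi_2)$ and (\ref{eq:slow-reflection-reflection-fragments}) you obtain $\rfn_{\isigma_{x+1}}(\Pi_2)$. But Lemma~\ref{lem:fragments-prove-feps-defined} guarantees only $\isigma_{x+1}\vdash\feps(x)\!\downarrow$, so reflecting over $\isigma_{x+1}$ gives back $\feps(x)\!\downarrow$, which you already assumed; it does \emph{not} give $\feps(x+1)\!\downarrow$. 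For that you would need $\Pi_2$-reflection over $\isigma_{x+2}$, i.e.\ the $\dagger$-shifted principle: the paper carries out exactly your bootstrap argument to prove $\rfn_\pa^\dagger(\Pi_n)\rightarrow\rfn_\pa(\Pi_n)$ in Section~\ref{sect:iterated-consistency}, and the $\diamond$-variant is designed so that this bootstrap does not go through. You cannot fix it by appealing to the Wainer--Sommer bound $\rfn_{\isigma_{x+1}}(\Pi_2)\rightarrow\forall y\,F_{\omega_{x+1}}(y)\!\downarrow$ either, since $\feps(x+1)=F_{\omega_{x+2}}(x+1)=F_{\omega_{x+1}^{x+2}}(x+1)$ sits strictly above level $\omega_{x+1}$. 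Concretely, if your key lemma held, then $\drfn(\Pi_2)$ would collapse to $\rfn_\pa(\Pi_2)$ over $\pa$, so $\pa+\drfn(\Pi_2)\vdash\con(\pa)$; combined with Theorem~\ref{thm:bound-consistency-slow-reflection} this would yield $\pa+\drfn(\Pi_2)\vdash\con(\pa+\drfn(\Pi_2))$, contradicting G\"odel's second incompleteness theorem.

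The paper's actual proof is more elementary and works for all $n\geq 1$: assume a proof of $\drfn(\Pi_{n+1})$ from $\isigma_{k+1}+\rfn_\pa(\Pi_n)$, instantiate slow reflection at the specific $\Pi_{n+1}$-formula $\neg\rfn_\pa(\Pi_n)$, use $\isigma_{k+1}\vdash\feps(k)\!\downarrow$ to extract ordinary reflection over the single fragment $\isigma_{k+1}$, and read off $\isigma_{k+1}+\rfn_\pa(\Pi_n)\vdash\con(\isigma_{k+1}+\rfn_\pa(\Pi_n))$, contradicting G\"odel's second theorem. The paper also records a secondary argument closer in spirit to yours, but with the crucial difference that it derives $\forall x\,\feps(x)\!\downarrow$ from the \emph{ordinary} reflection hypothesis $\rfn_\pa(\Pi_n)\supseteq\rfn_\pa(\Pi_2)$ (where reflection is over full $\pa$, not over a fragment indexed by $x$), which is sound but only applies when $n\geq 2$; it is then the hypothesis, not the slow-reflection conclusion, that collapses $\drfn(\Pi_{n+1})$ to $\rfn_\pa(\Pi_{n+1})$.
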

As the proof will show, even a suitable instance of parameter-free slow reflection is unprovable in $\pa+\rfn_\pa(\Pi_{n})$.
\begin{proof}
If we replace slow reflection by the usual reflection principle then the claim is a classical result of Kreisel and L\'evy in \cite{kreisel68}. We combine their proof with an argument specific to slow provability, due to \cite[Proposition 3.3]{rathjen13}: For $n\geq 1$ the formula $\neg\rfn_\pa(\Pi_{n})$ is $\Pi_{n+1}$ in $\isigma_1$. As we have seen above this implies
\begin{equation*}
 \isigma_{1}\vdash\drfn(\Pi_{n+1})\rightarrow\drfn(\neg\rfn_\pa(\Pi_{n}))
\end{equation*}
Aiming at a contradiction, assume that the proposition fails. Then we have
\begin{equation*}
 \isigma_{k+1}+\rfn_\pa(\Pi_n)\vdash\drfn(\neg\rfn_\pa(\Pi_{n}))
\end{equation*}
for  some number $k$. Using an analogue of (\ref{eq:slow-reflection-reflection-fragments}) we can deduce
\begin{equation*}
 \isigma_{k+1}+\rfn_\pa(\Pi_n)\vdash\forall_x(\feps(x)\!\downarrow\,\rightarrow\rfn_{\isigma_{x+1}}(\neg\rfn_\pa(\Pi_{n}))).
\end{equation*}
Since $\isigma_{k+1}$ proves the true $\Sigma_1$-formula $\feps(k)\!\downarrow$ we obtain
\begin{equation*}
 \isigma_{k+1}+\rfn_\pa(\Pi_n)\vdash\rfn_{\isigma_{k+1}}(\neg\rfn_\pa(\Pi_{n})).
\end{equation*}
This is equivalent to
\begin{equation*}
 \isigma_{k+1}+\rfn_\pa(\Pi_n)\vdash\con(\isigma_{k+1}+\rfn_\pa(\Pi_{n})),
\end{equation*}
which contradicts G\"odel's second incompleteness theorem.\\
It is interesting to consider the following alternative argument for the case $n\geq 2$: As is well known $\pa+\rfn_\pa(\Pi_{2})$ proves that $\feps$ is total (cf.\ Lemma \ref{lem:fragments-prove-feps-defined} below). Given that $\feps$ is total, however, the principle $\drfn(\Pi_{n+1})$ collapses into the usual $\rfn_\pa(\Pi_{n+1})$, and we can hark back to the original result of Kreisel and L\'evy \cite{kreisel68}.
\end{proof}

In the rest of this section we reformulate claim (\ref{eq:equiconsistency-slow-reflection}) of the introduction. The goal is to make it accessible for a model construction from \cite{sommer95}, to be carried out in the next section. We begin with an easy observation:

\begin{lemma}[$\isigma_1$]\label{lem:slow-reflection-only-for-pin}
If the theory $\pa+\drfn(\Pi_n)$ is consistent for arbitrarily large $n$ then the theory $\pa+\drfn$ is consistent as well.
\end{lemma}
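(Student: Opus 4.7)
The proof is a formalised compactness argument that uses the observation recorded immediately before the lemma: if a formula $\varphi$ is $\Pi_n$ in $\isigma_1$, then $\isigma_1\vdash\drfn(\Pi_n)\rightarrow\drfn(\varphi)$. Note that $\drfn(\Pi_n)$ is, up to $\isigma_1$-equivalence, monotone in $n$ (since every $\Pi_n$-sentence is trivially $\Pi_{n+1}$), so the hypothesis of the lemma is $\isigma_1$-equivalent to $\forall n\,\con(\pa+\drfn(\Pi_n))$. I therefore argue the contrapositive.

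Working in $\isigma_1$, suppose that $\pa+\drfn$ is inconsistent and let $p$ be a proof of $\bot$ from this theory. Since $p$ is a finite syntactic object, one can primitive-recursively extract the list $\drfn(\varphi_1),\dots ,\drfn(\varphi_k)$ of its non-$\pa$ axioms. Every first-order formula is, over $\isigma_1$, equivalent to a formula in prenex normal form of some definite quantifier complexity, so we may further compute a number $n$ such that each $\varphi_i$ is $\Pi_n$ in $\isigma_1$. Applying the observation above to every $\varphi_i$ (this step is uniform in $\varphi_i$ and $n$, since the underlying ``It's snowing''-argument is) we obtain, inside $\isigma_1$, proofs of $\drfn(\Pi_n)\rightarrow\drfn(\varphi_i)$. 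Chaining these with $p$ yields a proof of $\bot$ from $\pa+\drfn(\Pi_n)$, so that $\pa+\drfn(\Pi_n)$ is inconsistent as required.

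The only step that requires any care is the formalisation in $\isigma_1$, i.e.\ checking that the bookkeeping just described goes through in such a weak metatheory. This should not be a genuine obstacle: extracting the finite axiom list from $p$, assigning each $\varphi_i$ a prenex complexity, taking the maximum and composing the resulting formal proofs are all $\Delta_0$ operations on codes of syntactic objects, and the uniform $\isigma_1$-proof of $\drfn(\Pi_n)\rightarrow\drfn(\varphi)$ from the ``It's snowing''-Lemma is given by a primitive recursive function of $\varphi$ and $n$. Hence the whole transformation $p\mapsto$ (proof of $\bot$ in $\pa+\drfn(\Pi_n)$) is provably total in $\isigma_1$, which is all that is needed.
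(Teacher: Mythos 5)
Your proposal is correct and follows essentially the same route as the paper: you use the observation $\drfn(\Pi_n)\rightarrow\drfn(\varphi)$ for $\varphi$ of complexity $\Pi_n$ together with the monotonicity of $\drfn(\Pi_n)$ in $n$, and then make a syntactic compactness argument (which you spell out as ``extract the finite list of reflection axioms used in a hypothetical proof of $\bot$ and chain the implications''), exactly as the paper does when it invokes ``(syntactic) compactness''. Your remarks about formalising the bookkeeping in $\isigma_1$ are consistent with the paper's framing and correctly identify the only point that needs checking.
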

\begin{proof}
Let $\varphi$ be an arbitrary formula in the language of arithmetic. Choose $n$ such that $\varphi$ is $\isigma_1$-provably equivalent to a $\Pi_n$-formula. We have already shown
\begin{equation*}
 \pa\vdash\drfn(\Pi_n)\rightarrow\drfn(\varphi).
\end{equation*}
This means that $\pa+\drfn$ is contained in $\pa+\{\drfn(\Pi_n)\,|\,n\in\mathbb N\}$. Using (\ref{eq:slow-reflection-reflection-fragments}) we can also show that $m\leq n$ implies
\begin{equation*}
 \pa\vdash\drfn(\Pi_n)\rightarrow\drfn(\Pi_m).
\end{equation*}
By (syntactic) compactness it follows that $\pa+\{\drfn(\Pi_n)\,|\,n\in\mathbb N\}$ is consistent if $\pa+\drfn(\Pi_n)$ is consistent for arbitrarily large $n$.
\end{proof}

Somewhat converse to the proof of the lemma, assume that $\varphi(x)$ is the formula $\true_{\Pi_n}(x)$. Then we have
\begin{equation*}
 \pa\vdash\drfn(\varphi)\rightarrow\drfn(\Pi_n),
\end{equation*}
so that the theories
\begin{equation*}
 \pa+\drfn\,\equiv\,\pa+\{\drfn(\Pi_n)\,|\,n\in\mathbb N\}
\end{equation*}
are equal, as one would expect.\\
We have seen in (\ref{eq:slow-reflection-reflection-fragments}) how slow reflection relates to the usual reflection principles over the fragments of Peano Arithmetic. It is well known that reflection over these fragments corresponds to appropriate instances of transfinite induction (see \cite{kreisel68} for the general idea, and more specifically \cite{ono87} concerning fragments of arithmetic). Since we need to know that this correspondence is available in Peano Arithmetic --- and not only for each fixed fragment but rather uniformly in the fragment $\isigma_{x+1}$ --- we will repeat the arguments in some detail:\\
First, we adopt Sommer's \cite{sommer95} coding of ordinals below $\varepsilon_0$ (observe in particular the notational conventions in \cite[Section~3.4]{sommer95}, which help to distinguish actual ordinals and their numerical codes). Note that the ``stack of $\omega$'s''-function defined by
\begin{equation*}
 \omega_0^\alpha:=\alpha\qquad\omega_{x+1}^\alpha:=\omega^{\omega_x^\alpha}
\end{equation*}
is not part of Sommer's ordinal notation system (although it is part of his meta-theory). We can easily add $(\alpha,x)\mapsto\omega_x^\alpha$ as an $\isigma_1$-provably total function with $\Delta_0$-graph (cf. \cite[Section 2]{freund-proof-length}). As usual we abbreviate $\omega_x:=\omega^1_x$.\\
Next, let us formulate transfinite induction: For a formula $\psi\equiv\psi(\vec x,\gamma)$ with induction variable $\gamma$ and parameters $\vec x$ we set
\begin{align*}
\prog_{\gamma.\psi}(\vec x) & :\equiv\forall_\beta(\forall_{\gamma<\beta}\psi(\vec x,\gamma)\rightarrow\psi(\vec x,\beta)),\\
\ti_{\gamma.\psi}(\alpha) & :\equiv\forall_{\vec x}(\prog_{\gamma.\psi}(\vec x)\rightarrow\forall_{\gamma<\alpha}\psi(\vec x,\gamma)).
\end{align*}
This is similar to the notation used by Feferman \cite[Section~4.3]{feferman-reflections}, who would write $\ti(\alpha,\widehat\gamma\psi(\gamma))$ where we write $\ti_{\gamma.\psi}(\alpha)$. We have decided to move the induction formula to the subscript because we want to reserve the parenthesis for the free variables. Note in particular that $\gamma$ is bound in the formulas $\prog_{\gamma.\psi}(\vec x)$ and $\ti_{\gamma.\psi}(\alpha)$ (the reader may wish to think of $\gamma.\psi(\gamma)$ as a comprehension term with bound variable $\gamma$). Using the truth definition $\true_{\Pi_n}$ for $\Pi_n$-sentences we abbreviate
\begin{equation*}
 \ti_{\Pi_n}(\alpha):\equiv\ti_{\gamma.\true_{\Pi_n}(\varphi(\dot\gamma))}(\alpha).
\end{equation*}
Note that $\true_{\Pi_n}(\varphi(\dot\gamma))$ is a formula with two free variables: The variable $\varphi$, which stands for the code of a $\Pi_n$-formula with a single free variable, is the parameter of the transfinite induction. The induction variable $\gamma$ stands for the code of an ordinal which is substituted for the free variable of $\varphi$. This substitution results in the code of an instance $\varphi(\dot\gamma)$, which $\true_{\Pi_n}(\varphi(\dot\gamma))$ asserts to be true. Note that $\gamma$ becomes bound in the statement $\prog_{\gamma.\true_{\Pi_n}(\varphi(\dot\gamma))}(\varphi)$ (in spite of the dot). The parameter $\varphi$ is free in $\prog_{\gamma.\true_{\Pi_n}(\varphi(\dot\gamma))}(\varphi)$ but also becomes bound in $\ti_{\gamma.\true_{\Pi_n}(\varphi(\dot\gamma))}(\alpha)$. Using the ``It's snowing''-Lemma one establishes
\begin{equation*}
 \isigma_1\vdash\forall_\alpha(\ti_{\Pi_n}(\alpha)\rightarrow\ti_{\gamma.\psi}(\alpha))
\end{equation*}
for any formula $\psi$ which is $\Pi_n$ in $\isigma_1$. Let us connect reflection and transfinite induction:

\begin{proposition}[$\isigma_1$]\label{prop:reflection-transfinite-induction}
 For any number $n\geq 1$ we have
\begin{equation*}
 \isigma_1\vdash\forall_{x\geq n}(\ti_{\Pi_n}(\omega_{x-n+2})\rightarrow\rfn_{\isigma_x}(\Pi_n)).
\end{equation*}
\end{proposition}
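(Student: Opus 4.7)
My plan is to prove the proposition by a formalised Gentzen-style ordinal analysis of the fragments $\isigma_x$, carried out uniformly in $x$ inside $\isigma_1$ using Sommer's coding and arithmetisation. The three standard ingredients are an infinitary one-sided Tait calculus (call it $\mathbf{PA}_\infty$) with ordinal heights and cut ranks, an embedding lemma converting $\isigma_x$-derivations into $\mathbf{PA}_\infty$-derivations of small height and finite cut rank, and a cut-elimination lemma lowering the cut rank at the cost of exponentiating the height.

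First I would invoke the embedding lemma: $\isigma_1$ proves that any $\isigma_x$-proof of a sentence $\psi$ is converted into a $\mathbf{PA}_\infty$-derivation of $\psi$ of height $<\omega\cdot k$ (for some $k$ depending on the finitary proof) whose cut formulas have complexity at most $x+c$ for a fixed constant $c$. Second, the one-step cut-elimination lemma, an $\isigma_1$-theorem, transforms a derivation of height $\alpha$ and cut rank $r+1$ into one of height $2^\alpha$ and cut rank $r$. Iterating this reduction $x-n+c$ times as a primitive recursion on $x$ --- available in $\isigma_1$ because each step is a uniformly given local transformation on codes of derivations --- yields a derivation of $\psi$ with cut rank $\leq n$ and height $<\omega_{x-n+2}$ (after absorbing the finite constants using monotonicity of the $\omega_{(\cdot)}$ tower).

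The final step is a truth lemma: by transfinite induction on $\gamma<\omega_{x-n+2}$ with a suitable $\Pi_n$ induction formula (roughly, ``every cut-rank-$n$ derivation of height $\gamma$ has a true endsequent under $\true_{\Pi_n}$''), one shows that the endsequent of every such derivation is true. Cuts on formulas of complexity $<n$ are decided by the partial truth definition $\true_{\Pi_n}$, and $\omega$-rule instances are handled by the progressiveness of the induction formula; this is exactly the reason the ordinal bound $\omega_{x-n+2}$ is tied to the complexity $n$. Combining the three steps, $\ti_{\Pi_n}(\omega_{x-n+2})$ implies that every $\Pi_n$-sentence provable in $\isigma_x$ is true, which is $\rfn_{\isigma_x}(\Pi_n)$.

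The main obstacle I anticipate is the uniformity in $x$: packaging the iterated cut-elimination as one $\isigma_1$-provable implication whose recursion depth $x-n+c$ and target height $\omega_{x-n+2}$ are both unbounded in $x$. Here one must rely on Sommer's $\Delta_0$-graph arithmetisation of ordinal computation below $\varepsilon_0$ and the associated recursion schemes, rather than a separate argument for each fixed $x$. A secondary care point is verifying that the truth-lemma induction formula is genuinely $\Pi_n$ (rather than $\Pi_{n+1}$); this forces the cut-elimination to be pushed down to exactly the threshold $n$ before the partial truth predicate $\true_{\Pi_n}$ is applied.
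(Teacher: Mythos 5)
Your proposal is in the same spirit as the paper's proof --- both formalise a Gentzen-style ordinal analysis of $\isigma_x$ (embedding into an infinitary system, cut elimination, a soundness lemma by transfinite induction) --- but it leaves a genuine gap precisely at the point you flag as the ``main obstacle''. You observe that iterated cut-elimination must be packaged as one $\isigma_1$-provable implication with recursion depth and target height both unbounded in $x$, and you suggest relying on Sommer's $\Delta_0$-arithmetisation of ordinal computation. That arithmetisation handles the ordinals, but it does not tell you how to represent infinite derivations, nor how to express the cut-elimination operator on them, inside $\isigma_1$. You cannot simply iterate a ``uniformly given local transformation on codes of derivations'' because the objects being transformed are infinite trees, not codes in the usual sense. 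The paper resolves this with a specific device you do not mention: Buchholz' finitary term system $\mathbf{Z}^*$ of notations for infinite proofs, in which embedding $[d]$, inversion, cut reduction and cut elimination $E$ are function symbols, while end-sequent, ordinal height, cut rank and immediate subterms are primitive recursive on the terms. One then never manipulates infinite objects at all; the transfinite induction is applied to the formula asserting soundness of all $h\in\mathbf{Z}^*$ of a given ordinal height. Without $\mathbf{Z}^*$ or an equivalent notation system your ``iterate cut-elimination $x-n+c$ times'' step does not compile into an $\isigma_1$-proof.

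There is a second omission: the proposition is stated relative to the meta-theory $\isigma_1$ and is a $\Pi_2$-statement (``for every $n$ there exists a certain $\isigma_1$-proof''). So it is not enough to describe, for each fixed $n$, an $\isigma_1$-proof of $\forall_{x\geq n}(\ti_{\Pi_n}(\omega_{x-n+2})\rightarrow\rfn_{\isigma_x}(\Pi_n))$; one must also verify, inside $\isigma_1$, that the map $n\mapsto$ (code of that proof) is total. The paper handles this by isolating a fixed common core --- the $\isigma_1$-proof that $\mathbf{Z}^*$ is locally correct, which does not depend on $n$ and therefore exists by $\Sigma_1$-completeness --- and observing that the $n$-dependent remainder amounts to the $\isigma_1$-proofs of the Tarski conditions for $\Sigma_m\cup\Pi_m$, whose construction is formalised via \cite[Theorem~I.1.75]{hajek91}. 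Your proposal does not address this level of uniformity at all. Finally, a small point worth tightening: to make the soundness formula genuinely $\Pi_n$, the cut rank has to be pushed down to $m=n-1$ (not $n$), and the end-sequents tracked are in $\Sigma_m\cup\Pi_m$; your phrase ``cut-rank-$n$ derivation \ldots true under $\true_{\Pi_n}$'' blurs exactly the threshold that you (rightly) warn must be chosen with care.
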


A similar result is shown by Ono in \cite[Theorem 4.1]{ono87}. There, however, the quantification over $x$ takes place in the meta-theory, which is not sufficient for our purpose. The following proof (somewhat similar to the proof in \cite{ono87}, but using Buchholz' notations for infinite derivations instead of direct ordinal assignments to finite proofs) shows that the quantification can be internalized.

\begin{proof}
Observe that the proposition itself is a $\Pi_2$-statement: It claims that for each $n\geq 1$ there exists a certain $\isigma_1$-proof. To prove the proposition in the meta-theory $\isigma_1$ we must thus (i) construct the required $\isigma_1$-proofs and (ii)~show that this construction can itself be carried out in $\isigma_1$. Let us focus on task (i) in the first instance. Task (ii) will be discussed below.\\
Fix a number $n\geq 1$ and write it as $n=m+1$. The following proof can be formalized in $\isigma_1$: It is well known that $\rfn_{\isigma_x}(\Pi_n)$ follows from $\rfn_{\isigma_x}(\Sigma_m)$. Consider an arbitrary $x\geq n$. Aiming at $\rfn_{\isigma_x}(\Sigma_m)$, assume that we have $\isigma_x\vdash\varphi$ for some $\Sigma_m$-formula $\varphi$. We suppose that $\isigma_x\vdash\varphi$ is proved in a Tait-style sequent calculus, with induction implemented as a rule
\begin{equation*}
\begin{bprooftree}
\AxiomC{$\Gamma,\psi(0)$}
\AxiomC{$\Gamma,\neg \psi(x),\psi(Sx)$}
\LeftLabel{(Ind)}
\RightLabel{($x$ not free in $\Gamma$).}
\BinaryInfC{$\Gamma,\psi(t)$}
\end{bprooftree}
\end{equation*}
Since we allow arbitrary side formulas the usual induction axioms can be deduced. Partial cut elimination transforms $\isigma_x\vdash\varphi$ into a ``free-cut free'' proof, all cut formulas of which lie in the class $\bigcup_{y\leq x}\Sigma_y\cup\Pi_y$ (see e.g.\ \cite[Section 2.4.6]{buss-introduction-98} for more information).\\
Next, we embed the free-cut free proof $\isigma_x\vdash\varphi$ into an infinite proof system with the $\omega$-rule. To formalize this in the theory $\isigma_1$ we adopt the finitary term system $\mathbf Z^*$ of notations for infinite proofs, developed by Buchholz in \cite{buchholz91} (the reader who is not familiar with these notations will hopefully find enough hints to reconstruct the unformalized argument): Basic terms (constants) of $\mathbf Z^*$ have the form $[d]$ where $d$ is a finite derivation with closed end-sequent. Complex terms are built by the function symbols $I_{k,A}$ (inversion), $R_C$ (cut reduction) and $E$ (cut elimination). Intuitively $[d]$ stands for the infinite proof-tree that results by embedding $d$ into the infinite system, and the function symbols denote the well-known operators from infinite proof theory. Crucially, however, one can work with the term system $\mathbf{Z}^*$ without making the semantics official. Rather, \cite{buchholz91} describes primitive recursive functions which compute the end sequent, the ordinal height, the cut rank, the last rule, and terms denoting the immediate subtrees of an (infinite tree denoted by an) element of $\mathbf{Z}^*$. It is shown that these functions satisfy local correctness conditions, demanding e.g.\ that the immediate subtrees have smaller ordinal height than the whole tree and provide the premises required by the last rule. Let us write $\text{Ord}(h)$, $\text{End}(h)$ and $d_\text{cut}(h)$ for the ordinal height, the end sequent and the cut rank of $h\in\mathbf{Z}^*$. The crucial clauses for us are
\begin{align*}
 \text{Ord}(Eh) & =\exp(\text{Ord}(h)),\\
 \text{End}(Eh) & =\text{End}(h),\\
 d_\text{cut}(Eh) & =d_\text{cut}(h)\dotminus 1.
\end{align*}
To understand the first clause, recall that the ordinal height of an infinite proof grows exponentially when we reduce its cut rank. The most common choice is exponentiation to the base $\omega$. To get better bounds for small ordinals we instead use
\begin{equation*}
 \exp(\alpha):=\begin{cases}
                3^\alpha & \text{if $\alpha<\omega^2$},\\
                \omega^\alpha & \text{otherwise}.
               \end{cases}
\end{equation*}
Semantically one could take $\exp(\alpha)=3^\alpha$ throughout, but then one has to arithmetize ordinal exponentiation to the base $3$. We also remark that exponentiation to the base $2$ would not grow fast enough: The cut elimination operator of \cite[Definition 2.11]{buchholz91} contains an additional step (a repetition rule) to ``call'' the result of cut reduction. A second minor change to \cite{buchholz91} arises from the fact that our finite proofs are formulated with induction rules rather than axioms. Since rules can be nested the embedding lemma now produces ordinals $\text{Ord}([d])<\omega^2$ (instead of $\text{Ord}([d])<\omega\cdot 2$ in the case of induction axioms). Observe that embedding an induction rule produces cuts over the induction formula, but not over formulas of higher complexity. Finally, one attributes cut rank $y$ to formulas in $\Sigma_y\cup\Pi_y$ and cut rank $\infty$ to formulas of a different form. Similar modifications of \cite{buchholz91} can be found in Buchholz' lecture notes \cite{buchholz-lecture-03}.\\
To put this machinery into use, consider the $\Pi_n$-formula
\begin{multline*}
 \text{Sound}_m(\gamma):\equiv\forall_{h\in\mathbf{Z}^*}(\text{Ord}(h)=\gamma\land d_\text{cut}(h)\leq m\land\text{End}(h)\subseteq\Sigma_m\cup\Pi_m\rightarrow\\
\rightarrow\text{``$\text{End}(h)$ contains a true formula (in $\Sigma_m\cup\Pi_m$)''}).
\end{multline*}
It is easy to see that this formula is progressive: The immediate subtrees of $h$ satisfy the assumption of $\text{Sound}_m(\cdot)$ for ordinals $\gamma_n<\gamma$ and they provide the premises to the last rule of $h$ (local correctness of $h\in\mathbf{Z}^*$). Then the induction hypothesis tells us that all the end sequents of the subtrees are true. By Tarski's truth conditions the rules of the infinite proof system are sound for formulas in $\Sigma_m\cup\Pi_m$. It follows that the end sequent of $h$ is true. Having established this, the assumption $\ti_{\Pi_n}(\omega_{x-n+2})$ yields $\forall_{\gamma<\omega_{x-n+2}}\text{Sound}_m(\gamma)$. On the other hand, let $d$ be the above proof $\isigma_x\vdash\varphi$ with cut rank at most $x$. Then the term
\begin{equation*}
 h:=\underbrace{E\cdots\cdots E}_{x-m\text{ symbols}}[d]\in\mathbf{Z}^*
\end{equation*}
 has cut rank at most $m$. Since $\text{Ord}([d])<\omega^2$ implies $\text{Ord}(E[d])=3^{\text{Ord}([d])}<\omega^\omega$ we get $\text{Ord}(h)<\omega_{x-n+2}$. Also, we have $\text{End}(h)=\text{End}([d])=\{\varphi\}$. Thus from $\text{Sound}_m(\text{Ord}(h))$ we can infer that $\varphi$ is true, as required for $\rfn_{\isigma_x}(\Pi_n)$.\\
Recall the tasks (i) and (ii) from the beginning of this proof. So far we have accomplished task (i), i.e.\ for each $n\geq 1$ we have constructed an $\isigma_1$-proof which shows that transfinite $\Pi_n$-induction implies $\Pi_n$-reflection. To settle task (ii) we have to show that the construction of these proofs can itself be formalized in $\isigma_1$. The crucial observation is that the proofs constructed above contain a common core which does not depend on $n$: This is the proof
\begin{equation*}
 \isigma_1\vdash\text{``the term system $\mathbf{Z}^*$ is locally correct''}.
\end{equation*}
Since this core proof is fixed $\isigma_1$ shows that it exists, by $\Sigma_1$-completeness. In the part that does depend on $n$ the main task was to show that the formula $\text{Sound}_m(\gamma)$ is progressive. Besides the core proof, this depended on the fact that $\isigma_1$ proves the Tarski conditions for truth in $\Sigma_m\cup\Pi_m$. A straightforward formalization of \cite[Theorem~I.1.75]{hajek91} shows that these $\isigma_1$-proofs can be constructed in the meta theory~$\isigma_1$.
\end{proof}

Guided by this proposition we introduce the following notion:

\begin{definition}
 For each number $n$ the principle of slow transfinite $\Pi_n$-induction is defined by the formula
\begin{equation*}
 \ti_{\Pi_n}^\diamond:\equiv\forall_{x\geq n\dotminus 1}(\feps(x)\!\downarrow\,\rightarrow\ti_{\Pi_n}(\omega_{x+3-n})).
\end{equation*}
\end{definition}

In the following we are concerned with two goals: We want to connect slow transfinite induction with slow reflection. And we want to show that slow transfinite $\Pi_n$-induction becomes stronger as $n$ grows. First, we need two auxiliary results:

\begin{lemma}[$\isigma_1$]\label{lem:fragments-prove-feps-defined}
 For any number $n$ we have
\begin{equation*}
 \isigma_{n+1}\vdash\feps(n)\!\downarrow.
\end{equation*}
\end{lemma}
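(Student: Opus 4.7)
The plan is to exploit the classical Wainer--Schwichtenberg calibration of provably total recursive functions, in the explicit formalised form developed by Sommer \cite[Section 5]{sommer95}: for every ordinal $\alpha<\omega_{n+1}$ (with $\omega_{n+1}$ in the notation of this paper) one obtains an $\isigma_{n+1}$-proof of $\forall_x F_\alpha(x)\!\downarrow$, constructed by a procedure that is primitive recursive in the Sommer code of $\alpha$. I will apply this to an ordinal $\beta_n<\omega_{n+1}$ extracted by unfolding $F_{\varepsilon_0}(n)$ through the limit clause of the fast-growing hierarchy, so that totality of $F_{\beta_n}$ is enough to conclude $F_{\varepsilon_0}(n)\!\downarrow$.

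Concretely, since $\{\varepsilon_0\}(n)=\omega_{n+1}=\omega^{\omega_n}$ is itself a limit ordinal for every $n\geq 0$, two applications of the limit clause give
\begin{equation*}
F_{\varepsilon_0}(n) \;=\; F_{\omega_{n+1}}(n) \;=\; F_{\{\omega_{n+1}\}(n)}(n).
\end{equation*}
Setting $\beta_n:=\{\omega_{n+1}\}(n)$, the ordinal $\beta_n$ lies strictly below $\omega_{n+1}$ by definition of a fundamental sequence, and its Sommer code is primitive recursive in $n$. Instantiating Sommer's uniform construction at $\alpha=\beta_n$ produces an $\isigma_{n+1}$-proof of $\forall_x F_{\beta_n}(x)\!\downarrow$; specialising to $x=n$ and adjoining the $\Delta_0$-provable identity $F_{\varepsilon_0}(n)=F_{\beta_n}(n)$ yields the desired $\isigma_{n+1}$-proof of $F_{\varepsilon_0}(n)\!\downarrow$.

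The step I expect to require the most care is not the idea above but its \emph{uniform} formalisation in the meta-theory $\isigma_1$: I need to exhibit a primitive recursive function $n\mapsto d_n$ together with an $\isigma_1$-verification that each $d_n$ really codes an $\isigma_{n+1}$-proof of $F_{\varepsilon_0}(n)\!\downarrow$. I will inspect Sommer's construction to confirm that all of its sub-steps (computing the Sommer code of $\beta_n$, running the uniform construction, and attaching the $\Delta_0$-witness for the identity above) are manifestly primitive recursive, using that $\isigma_1$ proves the basic properties of the arithmetised fast-growing hierarchy recorded in the paragraph after equation (\ref{eq:def-slow-consistency}). Beyond this bookkeeping I anticipate no further difficulty: the lemma is essentially a packaging of the standard proof-theoretic calibration, evaluated at the single input $n$ of $F_{\varepsilon_0}$.
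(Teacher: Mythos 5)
Your proposal is correct and takes essentially the same approach as the paper: both unfold $\feps(n)=F_{\{\omega_{n+1}\}(n)}(n)$ via the limit clause and then invoke, uniformly in the meta-theory $\isigma_1$, the calibration that $\isigma_{n+1}$ proves totality of $F_\alpha$ for every $\alpha<\omega_{n+1}$. Where you cite Sommer's uniform construction as a black box, the paper spells this step out by formalising Gentzen's lifting construction to obtain $\isigma_{n+1}\vdash\ti_{\Pi_2}(\alpha)$ for $\alpha<\omega_{n+1}$ and combining it with the $\isigma_1$-provable progressivity of ``$F_\gamma$ is total'', but the underlying argument is the same.
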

\begin{proof}
Since the totality of $\feps$ is not available in the meta-theory $\isigma_1$ (nor even in $\pa$), we cannot simply invoke $\Sigma_1$-completeness. However, $\Sigma_1$-completeness does settle the case $n=0$ (or any finite number of cases). For $n\geq 1$ we recall the following well-known argument: In a weak meta-theory one can formalize the lifting construction for ordinal induction due to Gentzen \cite{gentzen43} (see also \cite[\mbox{Section 4}]{sommer95} concerning fragments of arithmetic). It tells us that $\isigma_{n+1}$ proves transfinite $\Pi_2$-induction up to any ordinal below $\omega_{n+1}$. By \cite[Section 5.2]{sommer95} basic properties of the fast-growing hierarchy are provable in the theory $\isigma_1$. In particular the theory $\isigma_1$ shows that the statement ``$F_\gamma$ is total'' is progressive in $\gamma$. Using ordinal induction $\isigma_{n+1}$ thus proves that $F_{\omega_n^{n+1}}$ is total. Now $\feps(x)\simeq F_{\omega_{x+1}}(x)\simeq F_{\omega_x^{x+1}}(x)$ allows us to conclude the claim. 
\end{proof}

We can deduce the following strengthening of (\ref{eq:slow-reflection-reflection-fragments}):

\begin{lemma}[$\isigma_1$]\label{lem:slow-reflection-fragments-above-bound}
 For any numbers $n$ and $k$ we have
\begin{equation*}
 \isigma_{k+1}\vdash\drfn(\Pi_n)\leftrightarrow\forall_{x\geq k}(\feps(x)\!\downarrow\,\rightarrow\rfn_{\isigma_{x+1}}(\Pi_n)).
\end{equation*}
\end{lemma}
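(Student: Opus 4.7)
The plan is to reduce the lemma to the unrestricted version~(\ref{eq:slow-reflection-reflection-fragments}) by filling in the missing values $x<k$ using Lemma~\ref{lem:fragments-prove-feps-defined} and monotonicity of the reflection principles in the fragment. The forward direction is immediate: the right-hand side of the claimed equivalence is a literal weakening of the right-hand side of (\ref{eq:slow-reflection-reflection-fragments}), so the existing $\isigma_1$-proof of that equivalence suffices.

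For the converse I would work inside $\isigma_{k+1}$ and assume $\forall_{x\geq k}(\feps(x)\!\downarrow\rightarrow\rfn_{\isigma_{x+1}}(\Pi_n))$. By Lemma~\ref{lem:fragments-prove-feps-defined} the statement $\feps(k)\!\downarrow$ is available in $\isigma_{k+1}$, so instantiating at $x=k$ yields $\rfn_{\isigma_{k+1}}(\Pi_n)$. For any $x<k$ the axioms of $\isigma_{x+1}$ form a subset of those of $\isigma_{k+1}$, and this inclusion is $\isigma_1$-provable, so $\pr_{\isigma_{x+1}}(\psi)\rightarrow\pr_{\isigma_{k+1}}(\psi)$ holds for all $\psi$; in particular $\rfn_{\isigma_{k+1}}(\Pi_n)\rightarrow\rfn_{\isigma_{x+1}}(\Pi_n)$. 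Gluing together the cases $x<k$ and $x\geq k$ gives the full $\forall_x(\feps(x)\!\downarrow\rightarrow\rfn_{\isigma_{x+1}}(\Pi_n))$, and a final appeal to (\ref{eq:slow-reflection-reflection-fragments}) delivers $\drfn(\Pi_n)$.

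The only genuine subtlety is uniformity in $k$: the lemma is advertised as $\isigma_1$-provable, which requires that the $\isigma_{k+1}$-proof sketched above be constructible inside $\isigma_1$ from the numeral for $k$. I expect this to be routine, because each ingredient is already stated uniformly. Lemma~\ref{lem:fragments-prove-feps-defined} is itself asserted under $[\isigma_1]$, so the construction of an $\isigma_{k+1}$-proof of $\feps(k)\!\downarrow$ is $\isigma_1$-definable in $k$; the inclusion $\isigma_{x+1}\subseteq\isigma_{k+1}$ for $x<k$ is a schematic fact verified once and for all; and (\ref{eq:slow-reflection-reflection-fragments}) is likewise uniform in $n$. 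Assembling these into a single $\isigma_{k+1}$-proof template parametric in $n$ and $k$, and appealing to $\Sigma_1$-completeness for the assertion that such a proof exists, completes task~(ii) in the same style as the corresponding argument at the end of Proposition~\ref{prop:reflection-transfinite-induction}.
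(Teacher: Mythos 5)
Correct, and essentially the same as the paper's proof: both directions are reduced to (\ref{eq:slow-reflection-reflection-fragments}), with the $x<k$ cases supplied via $\isigma_{k+1}\vdash\feps(k)\!\downarrow$ (Lemma~\ref{lem:fragments-prove-feps-defined}) together with the $\isigma_1$-provable monotonicity $\rfn_{\isigma_{k+1}}(\Pi_n)\rightarrow\rfn_{\isigma_{x+1}}(\Pi_n)$ for $x<k$. Your extra remarks on uniformity in $k$ are sound and consistent with how the paper treats meta-level formalization elsewhere.
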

\begin{proof}
Invoking (\ref{eq:slow-reflection-reflection-fragments}) it suffices to show
\begin{equation*}
 \isigma_{k+1}\vdash\forall_{x\geq k}(\feps(x)\!\downarrow\,\rightarrow\rfn_{\isigma_{x+1}}(\Pi_n))\rightarrow\forall_x(\feps(x)\!\downarrow\,\rightarrow\rfn_{\isigma_{x+1}}(\Pi_n)),
\end{equation*}
or also
\begin{equation*}
 \isigma_{k+1}\vdash(\feps(k)\!\downarrow\,\rightarrow\rfn_{\isigma_{k+1}}(\Pi_n))\rightarrow\forall_{x<k}(\feps(x)\!\downarrow\,\rightarrow\rfn_{\isigma_{x+1}}(\Pi_n)).
\end{equation*}
Indeed Lemma \ref{lem:fragments-prove-feps-defined} tells us that we have $\isigma_{k+1}\vdash\feps(k)\!\downarrow$. Now we only need to observe $\isigma_1\vdash \rfn_{\isigma_{k+1}}(\Pi_n)\rightarrow\forall_{x<k}\rfn_{\isigma_{x+1}}(\Pi_n)$.
\end{proof}

Putting pieces together we get the following bound on slow reflection:

\begin{proposition}[$\isigma_1$]\label{prop:slow-induction-implies-sloow-reflection}
 For any number $n\geq 1$ we have
\begin{equation*}
 \isigma_{n+1}\vdash\ti_{\Pi_n}^\diamond\rightarrow\drfn(\Pi_n).
\end{equation*}
\end{proposition}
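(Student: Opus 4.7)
The plan is to chain together Lemma~\ref{lem:slow-reflection-fragments-above-bound}, the definition of $\ti_{\Pi_n}^\diamond$, and Proposition~\ref{prop:reflection-transfinite-induction}. All three ingredients fit together cleanly once we align the bounds on the parameter $x$.

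First I would invoke Lemma~\ref{lem:slow-reflection-fragments-above-bound} with $k:=n$, which gives, inside $\isigma_{n+1}$, the equivalence
\begin{equation*}
 \drfn(\Pi_n)\leftrightarrow\forall_{x\geq n}(\feps(x)\!\downarrow\,\rightarrow\rfn_{\isigma_{x+1}}(\Pi_n)).
\end{equation*}
This is the form of slow reflection which I will derive from $\ti_{\Pi_n}^\diamond$. Notice that the bound $k=n$ is exactly matched to the hypothesis $\isigma_{n+1}$ of the proposition, since Lemma~\ref{lem:slow-reflection-fragments-above-bound} relies on Lemma~\ref{lem:fragments-prove-feps-defined} to prove $\feps(n)\!\downarrow$.

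Next, assuming $\ti_{\Pi_n}^\diamond$, I would fix an arbitrary $x\geq n$ with $\feps(x)\!\downarrow$. Since $n\geq 1$ implies $x\geq n\geq n\dotminus 1$, the definition of $\ti_{\Pi_n}^\diamond$ yields $\ti_{\Pi_n}(\omega_{x+3-n})$. Now I apply Proposition~\ref{prop:reflection-transfinite-induction}, but with the running variable instantiated to $x+1$ rather than $x$: the bound $x+1\geq n$ is satisfied, and $(x+1)-n+2=x+3-n$, so the proposition hands us
\begin{equation*}
 \ti_{\Pi_n}(\omega_{x+3-n})\rightarrow\rfn_{\isigma_{x+1}}(\Pi_n).
\end{equation*}
Together with the previous step this gives $\rfn_{\isigma_{x+1}}(\Pi_n)$, and since $x\geq n$ was arbitrary we obtain $\forall_{x\geq n}(\feps(x)\!\downarrow\,\rightarrow\rfn_{\isigma_{x+1}}(\Pi_n))$, which by the equivalence above is $\drfn(\Pi_n)$.

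There is no real obstacle here; the argument is essentially bookkeeping on the shift between the indexing of $\ti_{\Pi_n}^\diamond$ (which carries $\omega_{x+3-n}$) and that of Proposition~\ref{prop:reflection-transfinite-induction} (which carries $\omega_{x-n+2}$). The only thing to be careful about is choosing the cutoff $k$ in Lemma~\ref{lem:slow-reflection-fragments-above-bound} large enough that both the hypothesis of $\ti_{\Pi_n}^\diamond$ and the hypothesis of Proposition~\ref{prop:reflection-transfinite-induction} are automatic; taking $k=n$ achieves this while simultaneously explaining the strength $\isigma_{n+1}$ in the meta-theory.
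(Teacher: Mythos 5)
Your proof is correct and follows exactly the same route as the paper: reduce via Lemma~\ref{lem:slow-reflection-fragments-above-bound} (with $k=n$) to showing $\forall_{x\geq n}(\feps(x)\!\downarrow\,\rightarrow\rfn_{\isigma_{x+1}}(\Pi_n))$, then derive this from $\ti_{\Pi_n}^\diamond$ and Proposition~\ref{prop:reflection-transfinite-induction}. The paper compresses the index bookkeeping into a single phrase (``This follows from Proposition~\ref{prop:reflection-transfinite-induction}''), while you spell out the instantiation at $x+1$ and the arithmetic $(x+1)-n+2=x+3-n$; the content is identical.
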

\begin{proof}
 In view of Lemma \ref{lem:slow-reflection-fragments-above-bound} it suffices to show
\begin{equation*}
 \isigma_1\vdash\ti_{\Pi_n}^\diamond\rightarrow\forall_{x\geq n}(\feps(x)\!\downarrow\,\rightarrow\rfn_{\isigma_{x+1}}(\Pi_n)).
\end{equation*}
This follows from Proposition \ref{prop:reflection-transfinite-induction}.
\end{proof}

The task is now to bound the consistency strength of the theories $\pa+\ti_{\Pi_n}^\diamond$. To prepare this we need yet another auxiliary result:

\begin{lemma}[$\isigma_1$]\label{lem:lifting-ordinal-induction-gentzen}
 For any number $n\geq 1$ we have
\begin{equation*}
 \isigma_1\vdash\forall_{x\geq 1}(\ti_{\Pi_{n+1}}(\omega_x)\rightarrow\ti_{\Pi_n}(\omega_{x+1})).
\end{equation*}
\end{lemma}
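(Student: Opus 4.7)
The plan is to adapt Gentzen's classical ``jump'' for transfinite induction, using the hypothesis $\ti_{\Pi_{n+1}}(\omega_x)$ not only to bound the ordinal level but also to supply a natural-number induction which $\isigma_1$ alone does not provide. Working inside $\isigma_1$, I fix a code $\varphi$ of a $\Pi_n$-formula with one free variable, assume $\prog_{\gamma.\true_{\Pi_n}(\varphi(\dot\gamma))}(\varphi)$, abbreviate $\psi(\gamma):\equiv\true_{\Pi_n}(\varphi(\dot\gamma))$ and $T(\alpha):\equiv\forall_{\gamma<\alpha}\psi(\gamma)$, and introduce the jump formula
\begin{equation*}
 \psi^\#(\beta):\equiv\forall_\alpha(T(\alpha)\rightarrow T(\alpha+\omega^\beta)),
\end{equation*}
which is $\Pi_{n+1}$ in $\isigma_1$.

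Once $\psi^\#$ is shown to be progressive, the closure of $\ti_{\Pi_{n+1}}$ under arbitrary $\Pi_{n+1}$-in-$\isigma_1$ formulas (recorded just before Proposition \ref{prop:reflection-transfinite-induction}) combined with $\ti_{\Pi_{n+1}}(\omega_x)$ yields $\forall_{\beta<\omega_x}\psi^\#(\beta)$. Instantiating $\alpha:=0$ and using $\omega_{x+1}=\omega^{\omega_x}=\sup_{\beta<\omega_x}\omega^\beta$ then delivers $\forall_{\gamma<\omega_{x+1}}\psi(\gamma)$, which is the conclusion $\ti_{\Pi_n}(\omega_{x+1})$. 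The remaining work is progressivity of $\psi^\#$, which I split by cases on the Cantor normal form of $\beta$: the case $\beta=0$ reduces to $\prog_{\gamma.\psi}$ applied at $\alpha$, and the limit case follows from the continuity of ordinal addition in the exponent, since any $\gamma<\alpha+\omega^\beta$ is already bounded by $\alpha+\omega^{\beta'}$ for some $\beta'<\beta$ at which $\psi^\#$ is available by the induction assumption.

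The main obstacle is the successor case $\beta=\beta'+1$, in which I must iterate $\psi^\#(\beta')$ through $\omega$ stages to pass from $T(\alpha)$ to $T(\alpha+\omega^{\beta'}\!\cdot\omega)$. The $k$-th iterate $\chi(k):\equiv T(\alpha+\omega^{\beta'}\!\cdot k)$ is $\Pi_n$ in $k$, so naive natural-number induction on it is not at my disposal inside $\isigma_1$. The point of the hypothesis $x\geq 1$ is precisely to circumvent this: since $\omega\leq\omega_x$, the assumption $\ti_{\Pi_{n+1}}(\omega_x)$ specialises to transfinite induction up to $\omega$ for the $\Pi_{n+1}$-in-$\isigma_1$ formula $\chi$, and progressivity of $\chi$ is immediate from $\psi^\#(\beta')$; this yields $\forall_k\chi(k)$ and closes the successor case via $\alpha+\omega^{\beta'+1}=\sup_k(\alpha+\omega^{\beta'}\!\cdot k)$. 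I expect the remaining tasks to be routine bookkeeping: checking that the ordinal arithmetic used is $\isigma_1$-provable in Sommer's Cantor-normal-form encoding, and that the construction of $\psi^\#$ and the surrounding derivation is uniform in $n$ and in the parameter $\varphi$, so that the enclosing meta-argument (task (ii) in the previous proposition) goes through without extra effort.
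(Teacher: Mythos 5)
Your proof is correct and follows essentially the same route as the paper: the jump formula $\psi^\#(\beta)\equiv\forall_\alpha(T(\alpha)\rightarrow T(\alpha+\omega^\beta))$ is precisely the paper's lifting formula $\text{lift}_n(\varphi,\gamma)$ modulo variable names, and your use of the hypothesis $x\geq 1$ (via $\omega\leq\omega_x$) to supply the $\Pi_n$-induction that $\isigma_1$ lacks in the successor case is exactly the observation the paper makes when it weakens $\isigma_n$ to $\isigma_1$ at the end of its argument. The one organizational difference is that you invoke $\ti_{\Pi_{n+1}}(\omega_x)$ inline in the successor case of the progressivity argument, whereas the paper first establishes progressivity over the stronger base theory $\isigma_n$ (citing Sommer) and only afterwards notes that $\ti_{\Pi_{n+1}}(\omega_x)$ with $x\geq 1$ recovers the missing induction; the two resolutions are equivalent.
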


This result is of course due to Gentzen \cite{gentzen43}, but again our formulation is somewhat unusual in the way it internalizes $x$. For this reason we recapitulate the proof.

\begin{proof}
We follow Gentzen's construction as presented in \cite[Lemma 4.4]{sommer95}: Consider the lifting formula
\begin{equation*}
 \text{lift}_n(\varphi,\gamma):\equiv\forall_\beta(\forall_{\delta<\beta}\true_{\Pi_n}(\varphi(\dot\delta))\rightarrow\forall_{\delta<\beta+\omega^\gamma}\true_{\Pi_n}(\varphi(\dot\delta))).
\end{equation*}
Note that $\varphi$ is a variable that ranges over codes of formulas, rather than a single fixed formula. Crucially, the form of $\text{lift}_n(\varphi,\gamma)$ depends on $n$ but not on the (non-standard) number $x$. Since $\text{lift}_n(\varphi,\gamma)$ is $\Pi_{n+1}$ in $\isigma_1$ we have
\begin{equation*}
 \isigma_1\vdash\forall_x(\ti_{\Pi_{n+1}}(\omega_x)\rightarrow\ti_{\gamma.\text{lift}_n(\varphi,\gamma)}(\omega_x)).
\end{equation*}
As in the proof of \cite[Lemma 4.4]{sommer95} we have
\begin{equation*}
 \isigma_n\vdash\forall_\varphi(\text{Prog}_{\gamma.\true_{\Pi_n}(\varphi(\dot\gamma))}(\varphi)\rightarrow\text{Prog}_{\gamma.\text{lift}_n(\varphi,\gamma)}(\varphi)).
\end{equation*}
Note that this makes no reference to $x$. Together we obtain
\begin{equation*}
 \isigma_n\vdash\forall_x(\ti_{\Pi_{n+1}}(\omega_x)\rightarrow\forall_\varphi(\text{Prog}_{\gamma.\true_{\Pi_n}(\varphi(\dot\gamma))}(\varphi)\rightarrow\forall_{\gamma<\omega_x}\text{lift}_n(\varphi,\gamma)).
\end{equation*}
Specializing $\beta$ to zero in $\text{lift}_n(\varphi,\gamma)$ we get
\begin{equation*}
 \isigma_n\vdash\forall_x(\ti_{\Pi_{n+1}}(\omega_x)\rightarrow\forall_\varphi(\text{Prog}_{\gamma.\true_{\Pi_n}(\varphi(\dot\gamma))}(\varphi)\rightarrow\forall_{\gamma<\omega_x}\forall_{\delta<\omega^\gamma}\true_{\Pi_n}(\varphi(\dot\delta))).
\end{equation*}
Arguing in $\isigma_1$, if we have $x\geq 1$ then $\omega_x$ is a limit ordinal, so any $\delta<\omega_{x+1}$ is smaller than $\omega^\gamma$ for some $\gamma<\omega_x$. We thus obtain
\begin{equation*}
 \isigma_n\vdash\forall_{x\geq 1}(\ti_{\Pi_{n+1}}(\omega_x)\rightarrow\forall_\varphi(\text{Prog}_{\gamma.\true_{\Pi_n}(\varphi(\dot\gamma))}(\varphi)\rightarrow\forall_{\gamma<\omega_{x+1}}\true_{\Pi_n}(\varphi(\dot\gamma))).
\end{equation*}
Unravelling the abbreviation $\ti_{\Pi_n}(\omega_{x+1})$ we see that this is exactly the same as
\begin{equation*}
 \isigma_n\vdash\forall_{x\geq 1}(\ti_{\Pi_{n+1}}(\omega_x)\rightarrow\ti_{\Pi_n}(\omega_{x+1})).
\end{equation*}
To get the claim of the lemma we need to weaken $\isigma_n$ to $\isigma_1$. This is easy, because the antecedent $\ti_{\Pi_{n+1}}(\omega_x)$ with $x\geq 1$ makes $\Sigma_n$-induction over the natural numbers available.
\end{proof}

Using the lemma we can show that the principle of slow transfinite $\Pi_n$-induction gets stronger as $n$ grows: 

\begin{proposition}[$\isigma_1$]\label{prop:transfinite-induction-higher-formula-complexity}
 For any numbers $0<m\leq n$ we have
\begin{equation*}
 \isigma_n+\ti_{\Pi_n}^\diamond\vdash\ti_{\Pi_m}^\diamond.
\end{equation*}
\end{proposition}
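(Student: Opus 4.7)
The plan is to iterate the Gentzen lifting of Lemma \ref{lem:lifting-ordinal-induction-gentzen} exactly $n - m$ times, trading $\Pi_n$-induction for $\Pi_m$-induction at the cost of raising the ordinal subscript by $n - m$. This is precisely the shift built into the definition of $\ti_{\Pi_n}^\diamond$ versus $\ti_{\Pi_m}^\diamond$: where the former delivers $\ti_{\Pi_n}(\omega_{x+3-n})$, the iterated lifting promotes this to $\ti_{\Pi_m}(\omega_{x+3-m})$, which is exactly what is required.

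First, I would dispose of the trivial case $m = n$ and assume $m < n$. In the meta-theory $\isigma_1$ I would iterate Lemma \ref{lem:lifting-ordinal-induction-gentzen} a standard number $n - m$ of times to obtain
$$\isigma_1 \vdash \forall_{y\geq 1}\bigl(\ti_{\Pi_n}(\omega_y) \rightarrow \ti_{\Pi_m}(\omega_{y+n-m})\bigr),$$
checking only that the intermediate exponents $y, y+1, \dots, y+n-m-1$ all remain $\geq 1$, so that each application of the lemma is legal.

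Then I would argue inside $\isigma_n + \ti_{\Pi_n}^\diamond$, fixing $x \geq m - 1$ with $\feps(x)\!\downarrow$ and splitting on whether $x \geq n - 1$. In the main case $x \geq n-1$, applying $\ti_{\Pi_n}^\diamond$ at $x$ yields $\ti_{\Pi_n}(\omega_{x+3-n})$ with $x+3-n \geq 2$, and the iterated lifting then gives $\ti_{\Pi_m}(\omega_{x+3-m})$. In the residual case $m - 1 \leq x < n - 1$, Lemma \ref{lem:fragments-prove-feps-defined} supplies $\isigma_n \vdash \feps(n-1)\!\downarrow$ (this is why the ambient theory must be $\isigma_n$ rather than $\isigma_1$), so $\ti_{\Pi_n}^\diamond$ applied at $n-1$ yields $\ti_{\Pi_n}(\omega_2)$, and the lifting produces $\ti_{\Pi_m}(\omega_{n+2-m})$, which implies the desired $\ti_{\Pi_m}(\omega_{x+3-m})$ by monotonicity in the ordinal bound, since $x + 3 - m \leq n + 1 - m < n + 2 - m$.

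I do not anticipate a genuine obstacle: the two substantial inputs (Gentzen lifting and fragment totality of $\feps$) are already established earlier in the section. What needs care is the arithmetic bookkeeping of the index shifts and the observation that, because $m$ and $n$ are standard, both the case split and the iterated lifting produce an $\isigma_1$-proof of fixed finite shape, so the meta-level $\isigma_1$-formalization indicated by the bracket $[\isigma_1]$ is automatic by $\Sigma_1$-completeness.
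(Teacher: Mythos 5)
Your core argument is correct and follows essentially the same route as the paper: reduce $\ti_{\Pi_n}^\diamond$ to $\ti_{\Pi_m}^\diamond$ by iterating the Gentzen lifting of Lemma \ref{lem:lifting-ordinal-induction-gentzen}, and handle the low range $m-1 \leq x < n-1$ by invoking Lemma \ref{lem:fragments-prove-feps-defined} to get $\feps(n-1)\!\downarrow$ from $\isigma_n$ and then monotonicity of $\ti$. The only organizational difference is that the paper composes the one-step reduction $\isigma_{k+1}+\ti_{\Pi_{k+1}}^\diamond\vdash\ti_{\Pi_k}^\diamond$, whereas you collapse the lifting into a single $(n-m)$-fold statement and do one case split; this is a cosmetic rearrangement of the same ingredients.

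One point in your final paragraph is not quite right and is worth flagging, because it is exactly what the bracket $[\isigma_1]$ is policing. You say that since $m,n$ are standard, the $\isigma_1$-formalization "is automatic by $\Sigma_1$-completeness." But the proposition as stated has $m$ and $n$ quantified \emph{inside} $\isigma_1$ (this is needed downstream: Corollary \ref{cor:consistency-slow-induction-suffices} applies the proposition to an $n$ obtained from an $\isigma_1$-internal "arbitrarily large" hypothesis). For internal, possibly non-standard $n$ and $m$, "iterate the lifting a standard number $n-m$ of times" is not a single finite construction, and $\Sigma_1$-completeness only certifies provability for fixed numerals, not the universal statement over $n,m$. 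What is needed is to package the iteration as a $\Sigma_1$-\emph{induction} on $n$ (or on $n-m$) carried out inside $\isigma_1$ — the induction formula asserts the existence of the requisite proof and is $\Sigma_1$, so this is legitimate. The paper makes exactly this move explicit ("We argue by induction on $n\geq m$. Note that the induction statement is a $\Sigma_1$-formula..."). Your construction adapts to this without difficulty, but the justification should invoke $\Sigma_1$-induction rather than $\Sigma_1$-completeness.
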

\begin{proof}
 We argue by induction on $n\geq m$. Note that the induction statement is a $\Sigma_1$-formula, as it asserts the existence of a certain proof. For the induction step it suffices to show
\begin{equation*}
 \isigma_{m+1}+\ti_{\Pi_{m+1}}^\diamond\vdash\ti_{\Pi_m}^\diamond,
\end{equation*}
with $m\geq 1$. We argue in $\isigma_{m+1}$: Aiming at $\ti_{\Pi_m}^\diamond$, consider an arbitrary $x\geq m-1$ and assume that $\feps(x)$ is defined. We distinguish two cases: First assume $x\geq m$. Then the assumption $\ti_{\Pi_{m+1}}^\diamond$ yields $\ti_{\Pi_{m+1}}(\omega_{x+3-m-1})$. Using Lemma \ref{lem:lifting-ordinal-induction-gentzen} we get $\ti_{\Pi_m}(\omega_{x+3-m})$, just as required for $\ti_{\Pi_m}^\diamond$. Now assume $x=m-1$. Then we cannot use the assumption $\feps(x)\!\downarrow$, as $\ti_{\Pi_{m+1}}^\diamond$ only speaks about $x\geq m$. Still, $\feps(m)\!\downarrow$ is available by Lemma \ref{lem:fragments-prove-feps-defined}, and we get $\ti_{\Pi_{m}}(\omega_{m+3-m})$ as above. A fortiori we have $\ti_{\Pi_m}(\omega_{x+3-m})$ for $x=m-1$.
\end{proof}

Finally, we obtain the following reformulation of claim (\ref{eq:equiconsistency-slow-reflection}) from the introduction:

\begin{corollary}[$\isigma_1$]\label{cor:consistency-slow-induction-suffices}
 If the theory $\isigma_n+\ti_{\Pi_n}^\diamond$ is consistent for arbitrarily large numbers $n$ then the theory $\pa+\drfn$ is consistent as well.
\end{corollary}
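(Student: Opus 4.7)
My plan is to chain together the three results established in this section: Lemma \ref{lem:slow-reflection-only-for-pin}, Proposition \ref{prop:slow-induction-implies-sloow-reflection}, and Proposition \ref{prop:transfinite-induction-higher-formula-complexity}. Since the final statement is a $\Pi_2$-claim meta-provable in $\isigma_1$, I will work entirely within $\isigma_1$, relying on the fact that each ingredient has already been formalized there.

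First, by Lemma \ref{lem:slow-reflection-only-for-pin} it suffices to show that $\pa+\drfn(\Pi_m)$ is consistent for arbitrarily large $m$. Fix such an $m\geq 1$. By syntactic compactness, the consistency of $\pa+\drfn(\Pi_m)$ reduces to the consistency of $\isigma_k+\drfn(\Pi_m)$ for every number $k$. So given $k$, I would appeal to the hypothesis to pick some $n\geq\max(k,m+1)$ such that $\isigma_n+\ti_{\Pi_n}^\diamond$ is consistent.

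The next step is to show that this choice of $n$ already yields the consistency of $\isigma_n+\drfn(\Pi_m)$. Since $0<m\leq n$, Proposition \ref{prop:transfinite-induction-higher-formula-complexity} gives $\isigma_n+\ti_{\Pi_n}^\diamond\vdash\ti_{\Pi_m}^\diamond$. On the other hand, Proposition \ref{prop:slow-induction-implies-sloow-reflection} gives $\isigma_{m+1}\vdash\ti_{\Pi_m}^\diamond\rightarrow\drfn(\Pi_m)$, and this derivation remains available in $\isigma_n$ because $n\geq m+1$. Combining these two facts shows $\isigma_n+\ti_{\Pi_n}^\diamond\vdash\drfn(\Pi_m)$, whence the consistency of $\isigma_n+\ti_{\Pi_n}^\diamond$ transfers to consistency of $\isigma_n+\drfn(\Pi_m)$. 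Since $n\geq k$, the weaker theory $\isigma_k+\drfn(\Pi_m)$ is consistent as well, which is what was needed.

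There is no real obstacle here; the argument is just bookkeeping about index shifts. The one point to be careful about is the side condition $m\geq 1$ in Proposition \ref{prop:transfinite-induction-higher-formula-complexity} and Proposition \ref{prop:slow-induction-implies-sloow-reflection}, but this is harmless because Lemma \ref{lem:slow-reflection-only-for-pin} only requires $\pa+\drfn(\Pi_m)$ to be consistent for \emph{arbitrarily large} $m$, so restricting to $m\geq 1$ costs nothing. Formalization in the meta-theory $\isigma_1$ is automatic: each of the three cited results is stated as being provable in $\isigma_1$, and the remaining glue — $\Sigma_1$-compactness for consistency of an $\isigma_k$-theory being deduced from consistency of a stronger $\isigma_n$-theory — is a standard $\isigma_1$-provable fact about formal derivations.
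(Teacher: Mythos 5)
Your proof is correct and uses exactly the same three ingredients as the paper's proof (Lemma \ref{lem:slow-reflection-only-for-pin}, Proposition \ref{prop:slow-induction-implies-sloow-reflection}, Proposition \ref{prop:transfinite-induction-higher-formula-complexity}, plus syntactic compactness); the only difference is a trivial reordering — the paper applies Proposition \ref{prop:slow-induction-implies-sloow-reflection} over $\pa$ before invoking compactness, whereas you invoke compactness first and then apply that proposition inside $\isigma_n$ — but the two arrangements are logically equivalent.
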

\begin{proof}
By Lemma \ref{lem:slow-reflection-only-for-pin} it is enough to prove that $\pa+\drfn(\Pi_m)$ is consistent for all $m\geq 1$. Proposition \ref{prop:slow-induction-implies-sloow-reflection} reduces this to the consistency of $\pa+\ti_{\Pi_m}^\diamond$. By (syntactic) compactness we only need to show that $\isigma_k+\ti_{\Pi_m}^\diamond$ is consistent for arbitrary $k$. The assumption provides an $n\geq\max\{k,m\}$ such that $\isigma_n+\ti_{\Pi_n}^\diamond$ is consistent. Then it suffices to invoke Proposition \ref{prop:transfinite-induction-higher-formula-complexity}.
\end{proof}

The formulas $\ti_{\Pi_n}^\diamond$ and $\feps(n-1)\!\downarrow$ together entail $\Pi_n$-induction over the natural numbers. It would thus be tempting to replace the theory $\isigma_n+\ti_{\Pi_n}^\diamond$ by $\isigma_1+\ti_{\Pi_n}^\diamond$. However, this is problematic in a weak meta-theory where we do not know that $\isigma_1\vdash\feps(n-1)\!\downarrow$ is true. Still, it will be convenient that the induction formulas of $\isigma_n$ and $\ti_{\Pi_n}^\diamond$ have the same complexity.

\section{Models of Slow Transfinite Induction}\label{sect:bound-slow-induction}

In view of Corollary \ref{cor:consistency-slow-induction-suffices} it remains to show that the theories $\isigma_n+\ti_{\Pi_n}^\diamond$ are consistent. Our approach is inspired by the proof of \cite[Theorem 4.1]{rathjen13}, where a model of Peano Arithmetic is transformed into a model of slow consistency. The main technical ingredient is the construction of models of transfinite induction due to Sommer in \cite[Theorem 5.25]{sommer95}, building on classical work such as \cite{paris80}. As proved originally, \cite[Theorem 5.25]{sommer95} only applies to a standard ordinal $\alpha$. This is most apparent in the proof of \cite[Lemma 5.24]{sommer95}, where $\alpha$ is part of the data encoded in the standard number $m$. Theorem \ref{thm:sommer-modified} below formulates the same result for non-standard ordinals. It turns out that Sommer's proof can be adapted with some modest modifications (see in particular the explanation after Definition \ref{def:alpha-admissible}). Unfortunately, we must review much of Sommer's original proof in order to describe the necessary changes.\\
To formalize the model theoretic arguments as directly as possible it is convenient to work in the subtheory $\aca$ of second-order arithmetic. Recall (e.g.\ from \cite[Theorem III.1.16]{hajek91}) that any first-order theorem of $\aca$ is already provable in Peano Arithmetic. We remark that Sommer in \cite[Section 6.4]{sommer95} formalizes his results in much weaker theories.\\
The general idea of the proof is to start with a model $\mathcal M$ of Peano Arithmetic and to construct an initial segment $\mathcal I\subseteq\mathcal M$ which satisfies some amount of transfinite induction. The segment $\mathcal I$ will be the limit point of a sequence $A$ of elements of $\mathcal M$. The sequence $A$ will be finite in the sense of $\mathcal M$ but it will have non-standard length from an external viewpoint. The satisfaction of $\Sigma_n$-formulas in $\mathcal I$ will be reduced to the satisfaction of corresponding bounded formulas in $\mathcal M$, which we introduce in the following definition. Let us point out that the concepts used in this section are implicit in the proofs from \cite{sommer95}. We find it convenient to extract and name them.

\begin{definition}[cf.\ {\cite[Section 5.5.3]{sommer95}}]\label{def:bounded-variant}
Consider a formula of the form
\begin{equation*}
\varphi(\vec y)\equiv Q^{n-1}_{x_{n-1}}\dots Q^0_{x_0}\theta(\vec x,\vec y),
\end{equation*}
where the $Q^i\in\{\forall,\exists\}$ are all unbounded quantifiers of $\varphi$. Let $z^\varphi=\langle z_0^\varphi,\dots ,z_{n-1}^\varphi\rangle$ be a list of (the first $n$) variables which do not appear in $\varphi$. Then the formula
\begin{equation*}
\varphi^*(\vec y;\vec z\, ^\varphi):\equiv Q^{n-1}_{x_{n-1}\leq z_{n-1}^\varphi}\dots Q^0_{x_0\leq z_0^\varphi}\theta(\vec x,\vec y)
\end{equation*}
is called the bounded variant of $\varphi$.
\end{definition} 

Next, let us characterize the actual bounds $d_0,\dots ,d_{n-1}$ that are to be substituted for the variables $z^\varphi$. They depend on the initial segment $\mathcal I$, or rather on the sequence $A\in\mathcal M$ which has $\mathcal I$ as a limit point. The following definition is to be formalized in Peano Arithmetic: It will be applied inside our model $\mathcal M\vDash\pa$. In particular (iii) refers to the $\pa$-provably total function which maps each code of a formula $\varphi$ (which will be an element of $\mathcal M$) to the code $\varphi^*$ of its bounded variant (as computed in $\mathcal M$).

\begin{definition}[cf.\ {\cite[Lemma 5.11]{sommer95}}]\label{def:n-inductive}
A pair $(A,d)$ is called $n$-inductive if $A=\langle A_0,\dots ,A_{\len(A)-1}\rangle$ codes a strictly increasing sequence, $d=\langle d_0,\dots ,d_{n-1}\rangle$ codes a sequence of length $n$, and the following holds:
\begin{enumerate}[label=(\roman*)]
\item For $i<\len(A)-2$ we have $A_i^2\leq A_{i+1}$.
\item We have $A_{\len(A)-1}\leq d_m$ for all $m<n$.
\item Consider an arbitrary $m<n$, an $i<\len(A)-1$ with $i\geq n-1$, and a $\Pi_m$-formula $\varphi(y_0,\dots ,y_{k-1},x)$. Let $\varphi^*(y_0,\dots ,y_{k-1},x;z_0^\varphi,\dots ,z_{m-1}^\varphi)$ be the bounded variant of $\varphi$, and let $p=\langle p_0,\dots ,p_{k-1}\rangle$ be a list of parameters. If the $4$-tuple $\langle 0,\varphi,p,0\rangle$ has code strictly below $A_i-1$ then the $\Delta_0$-formula
\begin{equation*}
\varphi^*(p_0,\dots ,p_{k-1},w;d_0,\dots ,d_{m-1})
\end{equation*}
is true for some $w\leq A_{i+1}$ if it is true for some $w\leq d_m$.
\end{enumerate}
We say that the $n$-inductive pair $(A,d)$ lies in the interval $[a,b]$ if we have $A_0+n+1\geq a$ and $A_{\len(A)-1}\leq b$.
\end{definition}

The first and last entry of $\langle 0,\varphi,p,0\rangle$ leave room for a future extension. Note that a truth predicate for $\Delta_0$-formulas suffices to formalize the definition. Concerning the encoding of sequences (and tuples, represented as sequences of fixed length), it will be convenient to (provably) have
\begin{gather*}
w\leq s*\langle w\rangle,\\
w\leq w'\rightarrow s*\langle w\rangle\leq s*\langle w'\rangle,\\
\text{``$s'$ is an initial segment of $s$"}\rightarrow s'\leq s.
\end{gather*}
Also, for sequences of any fixed length the code of the sequence should be bounded by a polynomial in its entries. All these requirements hold under the encoding of~\cite[Section 2.2]{sommer95}.\\
Now consider a model $\mathcal M\vDash\pa$ and assume that $A\in\mathcal M$ encodes a strictly increasing sequence (from the viewpoint of $\mathcal M$). Define
\begin{equation*}
A^{\mathcal M}:=\{m\in\mathcal M\, |\, \mathcal M\vDash\exists_{i<\len(A)}\,m=A_i\}.
\end{equation*}
A non-empty initial segment $\mathcal I$ of $\mathcal M$ is called a limit point of $A$ if the set $\mathcal I\cap A^{\mathcal M}$ is unbounded in $\mathcal I$. If the length of $A$ is a non-standard element of $\mathcal M$ then we can define a limit point (indeed the smallest limit point) as
\begin{equation*}
 \mathcal I:=\{m\in\mathcal M\,|\,\text{for some $n\in\mathbb N$ we have }\mathcal M\vDash m\leq A_n\}.
\end{equation*}
While this defines $\mathcal I\subseteq\mathcal M$ as a set, it is not clear how the satisfaction relation for $\mathcal M$ could be transformed into (an arithmetical definition of) a satisfaction relation for $\mathcal I$ (cf.\ \cite[Problem I.4.28]{hajek91}). To circumvent this difficulty, it suffices to read \cite[Lemma 5.11(b)]{sommer95} ``the other way around'', taking the established equivalence as a definition and deducing what is usually the definition of satisfaction in a model:

\begin{proposition}[$\aca$, cf.\ {\cite[Lemma 5.11(b)]{sommer95}}]\label{prop:sigma_n-satisfaction-I}
Consider a standard number~$n$, a model $\mathcal M\vDash\pa$, and a pair $(A,d)\in\mathcal M$ which is $n$-inductive from the viewpoint of $\mathcal M$. Assume that $\mathcal I$ is a limit point of $A$. If $t(\vec x)$ is a term and $\vec p$ are elements of $\mathcal I$ then the value $t(\vec p)^{\mathcal M}$ also lies in $\mathcal I$. Thus
\begin{equation*}
 t(\vec p)^{\mathcal I}:=t(\vec p)^{\mathcal M}
\end{equation*}
defines an interpretation of terms in $\mathcal I$. To interpret the relation symbols $=$ and $\leq$ in $\mathcal I$ one simply restricts their interpretations in $\mathcal M$. To obtain a partial satisfaction relation for $\mathcal I$, consider a formula $\varphi(\vec y)$ in $\bigcup_{m\leq n}\Sigma_m\cup\Pi_m$ and parameters $\vec p\in\mathcal I$, and set
\begin{equation}\label{eq:equivalence-i-m-bounded}
\mathcal I\vDash\varphi(\vec p)\quad:\Leftrightarrow\quad\mathcal M\vDash\varphi^*(\vec p;d_0,\dots ,d_{m-1}).
\end{equation}
This is indeed a partial satisfaction relation for $\mathcal I$, i.e.\ Tarski's conditions hold wherever satisfaction is defined.
\end{proposition}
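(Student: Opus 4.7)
The plan is to verify, in order, (a) closure of $\mathcal I$ under term evaluation, (b) well-definedness of the proposed satisfaction clause, and (c) Tarski's conditions for (\ref{eq:equivalence-i-m-bounded}).

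For (a), every term $t(\vec x)$ of arithmetic admits a polynomial bound $t(\vec x)^{\mathcal M}\leq q(\max\vec x)$ whose degree depends only on the (standard) G\"odel number of $t$. If $\vec p\in\mathcal I$, then since the list $\vec p$ has standard length and $\mathcal I$ is a limit point of $A$, there is a standard $j$ with $\vec p\leq A_j$. Iterating condition (i) of Definition~\ref{def:n-inductive} a standard number of times yields some standard $c$ with $A_{j+c}\geq q(A_j)\geq t(\vec p)^{\mathcal M}$, hence $t(\vec p)^{\mathcal M}\in\mathcal I$. For (b), the map $\varphi\mapsto\varphi^*$ is $\pa$-provably total and the $d_i$ live in $\mathcal M$, so the right-hand side of (\ref{eq:equivalence-i-m-bounded}) is a genuine closed statement in the language of $\mathcal M$.

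The bulk of (c) is the quantifier step. Atomic formulas and Boolean combinations reduce directly to the corresponding facts in $\mathcal M$, because the bounded variant of a Boolean combination is, after pushing negations inwards, the same Boolean combination of bounded variants with the same bounds $d_0,\dots,d_{m-1}$. For the existential case, consider $\exists x\,\psi(x,\vec p)$ with $\psi$ a $\Pi_m$-formula and $m<n$; by definition
\begin{equation*}
 \mathcal I\vDash\exists x\,\psi(x,\vec p)\quad\Leftrightarrow\quad\mathcal M\vDash\exists x\leq d_m\,\psi^*(x,\vec p;d_0,\dots,d_{m-1}),
\end{equation*}
and I must show this is equivalent to the existence of $q\in\mathcal I$ with $\mathcal M\vDash\psi^*(q,\vec p;d_0,\dots,d_{m-1})$. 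The direction $(\Leftarrow)$ follows from (ii): any $q\in\mathcal I$ satisfies $q\leq A_j$ for some standard $j<\len(A)$, so $q\leq A_{\len(A)-1}\leq d_m$. For $(\Rightarrow)$, condition (iii) is the key. Using (a) together with the monotonicity and polynomial-bound conventions adopted for the coding of tuples, I can find a standard $i$ with $n-1\leq i<\len(A)-1$ and $\langle 0,\psi,\vec p,0\rangle<A_i-1$; the upper bound on $i$ is automatic since $\len(A)$ is non-standard. Condition (iii) then produces a witness $w\leq A_{i+1}$ with $\mathcal M\vDash\psi^*(w,\vec p;d_0,\dots,d_{m-1})$, and $w\in\mathcal I$ because $i+1$ is standard. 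The universal case is handled dually, by contraposition, applying (iii) to the $\Pi_m$-formula $\neg\psi$.

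The main obstacle I anticipate is the bookkeeping needed to arrange $\langle 0,\psi,\vec p,0\rangle<A_i-1$ for a standard $i$: the encoding conventions listed between Definition~\ref{def:n-inductive} and Proposition~\ref{prop:sigma_n-satisfaction-I} are precisely tailored so that this inequality is secured by choosing $i$ only a standard number of steps above the index that bounds $\vec p$, with the extra polynomial factors absorbed into $A_i$ by iterated application of (i). Everything else amounts to a routine unravelling of the definitions in $\mathcal M$.
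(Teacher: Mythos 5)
Your proposal is correct and follows essentially the same outline as the paper: closure under terms via Definition~\ref{def:n-inductive}(i), absoluteness of bounded formulas, and the quantifier step via clauses~(ii) and~(iii). The paper is terser — it reduces closure under terms to closure under squaring rather than invoking polynomial growth, and for the quantifier step it just points to clauses~(ii),~(iii) and refers to Sommer — while you spell out the choice of index $i$ and the pigeonholing, so the proposal is a useful expansion rather than a different route.

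Two small imprecisions are worth flagging, though neither is a genuine gap. First, in the Boolean/propositional case you speak of the $*$-operator commuting with Boolean combinations; since the domain of the satisfaction relation is the prenex class $\bigcup_{m\le n}\Sigma_m\cup\Pi_m$, a formula whose principal connective is propositional (or a bounded quantifier) is automatically $\Delta_0$, where $\varphi^*=\varphi$; this is the observation the paper makes explicitly and it makes your remark about ``pushing negations inwards'' unnecessary. Second, you twice rely on an index $j$ being ``standard'' (so that $q\le A_j$ for $q\in\mathcal I$, and so that $A_{i+1}\in\mathcal I$). This is correct if $\mathcal I$ is the smallest limit point of $A$, but the proposition is stated for an arbitrary limit point, where such indices can be non-standard. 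The right invariant is simply $A_j\in\mathcal I$: unboundedness of $\mathcal I\cap A^{\mathcal M}$ in $\mathcal I$ guarantees that whenever $A_i\in\mathcal I$ there is a later $A_{i'}\in\mathcal I$, hence $A_{i+1}\in\mathcal I$, and whenever $q\in\mathcal I$ there is $A_j\in\mathcal I$ with $q\le A_j\le A_{\len(A)-1}\le d_m$. With this substitution your argument goes through verbatim.
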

\begin{proof}
As a limit point, $\mathcal I$ contains all standard elements, and in particular the elements $0,1,2\in\mathcal M$. This reduces closure under successor, addition and multiplication to closure under squaring. Now $\mathcal I$ is closed under squaring by condition (i) of Definition \ref{def:n-inductive}.\\
Next, note that $\varphi^*$ and $\varphi$ are the same formula if $\varphi$ is bounded. Thus we have
\begin{equation}\label{eq:equivalence-i-bounded-m-bounded}
\mathcal I\vDash\varphi(\vec p)\quad\Leftrightarrow\quad\mathcal M\vDash\varphi(\vec p)\qquad\text{for any bounded formula $\varphi$}.
\end{equation}
If the principal connective of a formula $\varphi\in\Sigma_m\cup\Pi_m$ is a propositional connective or a bounded quantifier then $\varphi$ must be a $\Delta_0$-formula. Thus Tarski's conditions for these connectives carry over from $\mathcal M$ (note that, by the first claim, any witness $m\leq t(\vec p)^{\mathcal M}$ to a bounded quantifier lies in $\mathcal I$). The case of an unbounded quantifier relies on the fact that, by clauses (ii) and (iii) of Definition \ref{def:n-inductive}, the quantifier has a witness in $\mathcal I$ if and only if it has a witness below $d_{m-1}$. We refer to the proof of \cite[Lemma 5.11(b)]{sommer95} for more details.
\end{proof}

Let us once more stress the important point that, due to (\ref{eq:equivalence-i-m-bounded}), the complexity of the partial satisfaction relation for $\mathcal I$ does not depend on $n$. It will be convenient to extend the partial satisfaction relation to a fixed number of additional quantifiers: Let $\Pi_3(\Sigma_n)$ be the class of formulas $\forall_{\vec x}\exists_{\vec y}\forall_{\vec z}\psi$ where $\psi$ is a propositional combination of formulas in $\bigcup_{m\leq n}(\Sigma_m\cup\Pi_m)$. Building on the given satisfaction relation for $\Sigma_n$-formulas one can give an arithmetical definition of satisfaction for formulas from the class $\Pi_3(\Sigma_n)$ (just as one usually defines truth for $\Pi_3$-formulas):

\begin{lemma}[$\aca$]\label{lem:extended-satisfaction-I}
 In the situation of Proposition \ref{prop:sigma_n-satisfaction-I}, the partial satisfaction relation over $\mathcal I$ can be extended to a satisfaction relation for $\Pi_3(\Sigma_n)$-formulas, still satisfying Tarski's conditions.
\end{lemma}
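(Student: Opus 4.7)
The plan is straightforward: prepend three unbounded quantifiers interpreted directly over $\mathcal I$ to the satisfaction relation supplied by Proposition \ref{prop:sigma_n-satisfaction-I}, and extend by the obvious Tarski clauses for propositional connectives. In detail, I would first define an auxiliary relation $\mathcal I\vDash\psi(\vec p)$ for propositional combinations $\psi$ of $\Sigma_m\cup\Pi_m$-formulas ($m\leq n$) by recursion on the propositional structure, using
\begin{equation*}
 \mathcal I\vDash\neg\chi(\vec p)\,\Leftrightarrow\,\mathcal I\nvDash\chi(\vec p),\qquad\mathcal I\vDash\chi_1\land\chi_2\,\Leftrightarrow\,\mathcal I\vDash\chi_1\text{ and }\mathcal I\vDash\chi_2,
\end{equation*}
and the analogous clauses for $\lor$ and $\to$, with the base case being the relation of Proposition \ref{prop:sigma_n-satisfaction-I}. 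Then, for a formula $\varphi(\vec y)\equiv\forall_{\vec x}\exists_{\vec u}\forall_{\vec z}\,\psi(\vec x,\vec u,\vec z,\vec y)$ whose matrix $\psi$ is such a propositional combination, set
\begin{equation*}
 \mathcal I\vDash\varphi(\vec p)\,:\Leftrightarrow\,\forall\vec a{\in}\mathcal I\,\exists\vec b{\in}\mathcal I\,\forall\vec c{\in}\mathcal I\,(\mathcal I\vDash\psi(\vec a,\vec b,\vec c,\vec p)).
\end{equation*}

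Tarski's conditions for the propositional layer hold by construction, as do those for the three outer quantifiers (the relevant witnesses and counter-witnesses are drawn directly from $\mathcal I$). For bounded quantifiers and connectives occurring within the $\Sigma_m\cup\Pi_m$-subformulas, Tarski's conditions were already verified in Proposition \ref{prop:sigma_n-satisfaction-I}; and the closure of $\mathcal I$ under term evaluation, established in the first paragraph of that proof, ensures that parameters plugged in from the outer quantifiers do not disturb the bounded-variant evaluation inside $\mathcal M$.

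The whole construction lives in $\aca$: the base satisfaction relation (\ref{eq:equivalence-i-m-bounded}) is arithmetical in $\mathcal M$, $A$ and $d$; recursion over the finite propositional structure of $\psi$ preserves arithmeticity; and the three quantifiers relativised to the arithmetically definable class $\mathcal I$ still yield an arithmetical formula, which arithmetical comprehension turns into a set. The step requiring the most care -- rather than presenting a genuine obstacle -- is the observation that the outer quantifiers are interpreted directly over $\mathcal I$ and not via any bounded variant inside $\mathcal M$. Accordingly, one should expect no uniform equation of the form $\mathcal I\vDash\varphi(\vec p)\Leftrightarrow\mathcal M\vDash\varphi^*(\vec p;\vec d)$ for arbitrary $\varphi\in\Pi_3(\Sigma_n)$, only on the $\Sigma_m/\Pi_m$ fragment where such an equation is precisely what makes the Tarski clauses cohere.
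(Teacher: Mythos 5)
The paper states this lemma without a proof, offering only the one-line hint that one should define satisfaction for $\Pi_3(\Sigma_n)$-formulas ``just as one usually defines truth for $\Pi_3$-formulas'' on top of the base relation (\ref{eq:equivalence-i-m-bounded}); your proposal fleshes out precisely this intended construction and is correct. Your closing observation — that the three outer quantifiers range directly over $\mathcal I$ and that one should not expect a uniform bounded-variant equation on the $\Pi_3(\Sigma_n)$ level — is exactly the point that makes the construction safe, since it is what keeps the complexity of the satisfaction relation independent of $n$ (which the paper emphasises right after Proposition~\ref{prop:sigma_n-satisfaction-I}).
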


Note that the induction axiom for a $\Sigma_n$-formula lies in the class $\Pi_3(\Sigma_n)$ (after prefixing quantifiers). We can thus formulate the following result:

\begin{lemma}[$\aca$]\label{lem:I-satisfies-isigman}
 In the situation of Proposition \ref{prop:sigma_n-satisfaction-I}, the initial segment $\mathcal I$ satisfies all axioms of $\isigma_n$.
\end{lemma}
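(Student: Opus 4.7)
The plan is to check the two classes of $\isigma_n$-axioms separately. The basic axioms of $\pa$ (universal closures of bounded formulas) transfer from $\mathcal M$ to $\mathcal I$ directly via (\ref{eq:equivalence-i-bounded-m-bounded}) and Tarski's conditions from Lemma \ref{lem:extended-satisfaction-I}: if $\forall\vec x\,\theta(\vec x)$ is such an axiom with $\theta\in\Delta_0$, then for every $\vec p\in\mathcal I$ the formula $\theta(\vec p)$ holds in $\mathcal M$ (since $\mathcal M\vDash\pa$), and hence in $\mathcal I$.

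The substantial part is $\Sigma_n$-induction. Fix $\varphi(x,\vec y)\in\Sigma_n$; after prefixing the outer universal closure its induction axiom lies in $\Pi_3(\Sigma_n)$, so Lemma \ref{lem:extended-satisfaction-I} makes its truth in $\mathcal I$ meaningful. Assuming parameters $\vec p\in\mathcal I$ together with $\mathcal I\vDash\varphi(0,\vec p)$ and $\mathcal I\vDash\forall x\,(\varphi(x,\vec p)\to\varphi(x+1,\vec p))$, I would unfold these via (\ref{eq:equivalence-i-m-bounded}) to the statements that $\mathcal M\vDash\varphi^*(0,\vec p;\vec d)$ and that $\mathcal M\vDash\varphi^*(q,\vec p;\vec d)\to\varphi^*(q+1,\vec p;\vec d)$ for every $q\in\mathcal I$. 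The goal then becomes $\mathcal M\vDash\varphi^*(q_0,\vec p;\vec d)$ for an arbitrary $q_0\in\mathcal I$.

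The key observation, carrying the whole argument, is that $\mathcal I$ is an initial segment of $\mathcal M$ closed under successor: if $q\in\mathcal I$ then $q\leq A_i$ for some standard $i$, so $q+1\leq A_{i+1}$ by the strict monotonicity of $A$. I would fix $q_0\in\mathcal I$ and perform bounded induction inside $\mathcal M$ on the $\Delta_0$-formula $\psi(x):\equiv x\leq q_0\to\varphi^*(x,\vec p;\vec d)$, which is legitimate because $\mathcal M\vDash\pa$ provides induction on all formulas. The base $\psi(0)$ is immediate from the translated base hypothesis. For the step, from $q+1\leq q_0$ one obtains $q<q_0$, hence $q\in\mathcal I$ because $\mathcal I$ is an initial segment, and the translated step hypothesis applies at $q$ to yield $\psi(q)\to\psi(q+1)$ in $\mathcal M$. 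Thus $\mathcal M\vDash\psi(q_0)$, i.e.\ $\mathcal M\vDash\varphi^*(q_0,\vec p;\vec d)$, which is exactly what is needed.

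The main conceptual obstacle is that the induction step, once translated, is only known to hold for $q\in\mathcal I$, whereas plain induction in $\mathcal M$ would require it for every $q\in\mathcal M$; bounding the induction by an element of $\mathcal I$ and exploiting closure of $\mathcal I$ under successor is precisely what bridges this gap. Beyond this, the argument is a mechanical unfolding of the partial satisfaction relation from Proposition \ref{prop:sigma_n-satisfaction-I} and its extension in Lemma \ref{lem:extended-satisfaction-I}.
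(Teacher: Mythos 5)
Your proof is correct and follows the same route the paper takes: transfer the non-induction axioms by absoluteness of bounded formulas, and transfer $\Sigma_n$-induction via the equivalence $\mathcal I\vDash\varphi(\vec p)\Leftrightarrow\mathcal M\vDash\varphi^*(\vec p;\vec d)$ to induction for the $\Delta_0$-formula $\varphi^*$ in $\mathcal M$. You spell out one detail that the paper leaves implicit but that is genuinely needed — the translated step hypothesis is only available for arguments in $\mathcal I$, so one bounds the induction by the target $q_0\in\mathcal I$ (or, equivalently, applies the least-number principle in $\mathcal M$ and observes the minimal counterexample lies below $q_0$, hence in $\mathcal I$) — and your bounded-induction formulation of this bridge is exactly the right fix.
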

\begin{proof}
 For all axioms other than induction it suffices to invoke Tarski's conditions and the absoluteness of atomic formulas, provided by (\ref{eq:equivalence-i-bounded-m-bounded}). Equivalence (\ref{eq:equivalence-i-m-bounded}) reduces induction for the $\Sigma_n$-formula $\varphi$ in $\mathcal I$ to induction for the formula $\varphi^*$ in the model $\mathcal M$.
\end{proof}

The usual proof of soundness relies on a full satisfaction relation, and is thus not available for $\mathcal I$. It is standard to fix this:

\begin{lemma}[$\aca$]\label{lem:I-satisfies-consequences}
 In the situation of Proposition \ref{prop:sigma_n-satisfaction-I}, consider two $\Pi_3(\Sigma_n)$-formulas $\varphi(\vec x)$ and $\psi(\vec x)$ and parameters $\vec p\in\mathcal I$. If we have $\isigma_n\vdash\forall_{\vec x}(\varphi\rightarrow\psi)$ then $\mathcal I\vDash\varphi(\vec p)$ implies $\mathcal I\vDash\psi(\vec p)$. In particular, a notion that is $\Delta_1$ in $\isigma_n$ is absolute between $\mathcal I$ and $\mathcal M$.
\end{lemma}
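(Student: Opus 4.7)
The plan is to bound the complexity of formulas appearing in a proof of $\forall_{\vec x}(\varphi\to\psi)$ by cut-elimination in pure first-order logic, and then extend the partial satisfaction relation far enough to cover them. First, I would observe that the proof of $\forall_{\vec x}(\varphi\to\psi)$ in $\isigma_n$ uses only finitely many non-logical axioms $\chi_1,\dots,\chi_k$, all of bounded (standard) complexity over $\bigcup_{m\le n}(\Sigma_m\cup\Pi_m)$ after prenexing. The implication $\bigwedge_i\chi_i\to\forall_{\vec x}(\varphi\to\psi)$ is then a theorem of pure predicate logic, so Gentzen's Hauptsatz yields a cut-free sequent calculus proof. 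The subformula property furnishes a standard bound $K$ on the quantifier alternation depth of every formula appearing in this cut-free proof.

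Second, I would iterate the construction of Lemma \ref{lem:extended-satisfaction-I} $K$ times, successively absorbing quantifier blocks into the partial satisfaction relation on $\mathcal I$. Each iteration is a straightforward arithmetical definition inside $\aca$, and Tarski's conditions are preserved by the inductive clauses. Since $K$ is a fixed standard number, the result is a partial satisfaction relation covering every formula occurring in the cut-free proof.

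Third, I would run the usual local soundness argument by induction on the cut-free proof. Lemma \ref{lem:I-satisfies-isigman} validates the axiom leaves $\chi_1,\dots,\chi_k$, and the inference steps go through using only Tarski's conditions for the extended satisfaction relation. Specialising to $\vec p\in\mathcal I$ then gives $\mathcal I\vDash\varphi(\vec p)\to\psi(\vec p)$, which yields the main claim. The addendum on $\Delta_1$-absoluteness follows immediately: if $\theta$ is provably equivalent in $\isigma_n$ to some $\theta_\Sigma\in\Sigma_1$ and to some $\theta_\Pi\in\Pi_1$, then by the main claim $\mathcal I$ also sees both equivalences, while $\theta_\Sigma$ and $\theta_\Pi$ are interpreted absolutely between $\mathcal M$ and $\mathcal I$ via their bounded variants by Proposition \ref{prop:sigma_n-satisfaction-I} and equation (\ref{eq:equivalence-i-bounded-m-bounded}).

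The main obstacle I expect is the bookkeeping in the second step, namely certifying that the iterated extension of the partial satisfaction relation really does respect Tarski's clauses for unbounded quantifiers at the additional levels. This ultimately rests on clauses (ii) and (iii) of Definition \ref{def:n-inductive} for the $n$-inductive pair $(A,d)$, which bound quantifier searches into $\mathcal I$ by the parameters $d_m$ in $\mathcal M$; once these absoluteness clauses are checked against the new layers of quantifiers, the remainder is a routine soundness induction.
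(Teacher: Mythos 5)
Your proof of the main claim is correct and follows essentially the paper's route: cut elimination, the subformula property, and local soundness for a partial satisfaction relation. The one detour you take is unnecessary. By packing everything into the single implication $\bigwedge_i\chi_i\to\forall_{\vec x}(\varphi\to\psi)$ you drive the quantifier alternation depth above $\Pi_3(\Sigma_n)$, which then forces the $K$-fold iteration of Lemma~\ref{lem:extended-satisfaction-I}. The paper instead works with the sequent $\Gamma,\varphi\Rightarrow\psi$ (with $\Gamma$ a multiset of $\isigma_n$-axioms). Since every formula in that sequent already lies in $\Pi_3(\Sigma_n)$, and subformulas of $\Pi_3(\Sigma_n)$-formulas remain in $\Pi_3(\Sigma_n)$, the subformula property of the cut-free proof keeps everything within the class covered by a single application of Lemma~\ref{lem:extended-satisfaction-I}. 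Both versions work, but the sequent formulation avoids the bookkeeping you flag as the main obstacle.

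The addendum on $\Delta_1$-absoluteness contains a genuine slip. You write that $\theta_\Sigma$ and $\theta_\Pi$ are each ``interpreted absolutely between $\mathcal M$ and $\mathcal I$,'' but this is false for the individual formulas: by clause (\ref{eq:equivalence-i-m-bounded}), $\mathcal I\vDash\exists_x\theta(x)$ means $\mathcal M\vDash\exists_{x\leq d_0}\theta(x)$, which gives only \emph{upward} absoluteness of $\Sigma_1$-formulas (from $\mathcal I$ to $\mathcal M$); a witness in $\mathcal M$ need not lie below $d_0$. Dually, $\Pi_1$-formulas are only \emph{downward} absolute. The correct argument combines these two one-directional absolutenesses with the $\isigma_n$-provable equivalence $\theta_\Sigma\leftrightarrow\theta_\Pi$ (transferred to $\mathcal I$ by the main claim): if $\mathcal M\vDash\theta_\Sigma(\vec p)$ then $\mathcal M\vDash\theta_\Pi(\vec p)$, downward absoluteness gives $\mathcal I\vDash\theta_\Pi(\vec p)$, hence $\mathcal I\vDash\theta_\Sigma(\vec p)$, and symmetrically in the other direction. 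As written your argument claims more than what holds, and without the one-directional refinement the step does not go through.
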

\begin{proof}
First, we transform a given proof of $\varphi\rightarrow\psi$ into a sequent calculus proof (see e.g.\ \cite[Definition 2.3.2]{buss-introduction-98}) of $\Gamma,\varphi\,\Rightarrow\,\psi$, where $\Gamma$ consists of axioms of $\isigma_n$. Next, we eliminate all occurrences of the cut rule (see e.g.\ \cite[Theorem 2.4.2]{buss-introduction-98}). By the subformula property all formulas which occur in the resulting proof belong to the class $\Pi_3(\Sigma_n)$. For this class we have a satisfaction relation, so we can deduce soundness as usual.\\
By Tarski's conditions and the absoluteness of bounded formulas any $\Sigma_1$-formula ($\Pi_1$-formula) is upwards (downwards) absolute. The desired absoluteness follows as, by the first claim, the two versions of a $\Delta_1$-formula are equivalent in the (partial) model $\mathcal I$.
\end{proof}

In particular, the lemma shows how a partial satisfaction relation can yield a consistency result. Before we move on to ordinal induction, let us observe how the initial segment $\mathcal I$ is located inside $\mathcal M$:

\begin{lemma}[$\aca$]\label{lem:location-limit-point}
Adding to the assumptions of Proposition \ref{prop:sigma_n-satisfaction-I}, assume that the $n$-inductive pair $(A,d)$ lies in the interval $[a,b]$ (from the viewpoint of~$\mathcal M$). Then the initial segment $\mathcal I$ contains $a$ but not $b$.
\end{lemma}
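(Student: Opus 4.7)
The plan is to unwind the definition of ``lies in the interval $[a,b]$'' from Definition 2.6 and then exploit two structural properties of $\mathcal I$ already obtained: that $\mathcal I$ is a non-empty initial segment of $\mathcal M$, and that (by the argument at the start of the proof of Proposition 2.9) $\mathcal I$ is closed under successor. Both halves will drop out essentially by chasing the hypotheses, so I do not expect any real obstacle; the only mildly subtle point is making sure one really uses that $n$ is a standard number, since $\mathcal I$ is closed under adding standard constants but certainly not under adding non-standard ones.

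To see that $a\in\mathcal I$, I would first observe that, as a limit point, $\mathcal I\cap A^{\mathcal M}$ is non-empty, so there is some $A_i$ in $\mathcal I$; since $A$ is strictly increasing in $\mathcal M$ and $\mathcal I$ is an initial segment, the smallest entry $A_0$ also lies in $\mathcal I$. Now $n$ is the fixed standard number from Proposition 2.9, so iterating closure of $\mathcal I$ under successor $n+1$ times yields $A_0+n+1\in\mathcal I$. The hypothesis $A_0+n+1\geq a$ (true in $\mathcal M$, hence true absolutely for elements of $\mathcal I$) together with the initial-segment property of $\mathcal I$ then forces $a\in\mathcal I$.

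For the second half, I would argue by contradiction. Suppose $b\in\mathcal I$. Since $\mathcal M\vDash A_{\len(A)-1}\leq b$ and $\mathcal I$ is an initial segment of $\mathcal M$, this gives $A_{\len(A)-1}\in\mathcal I$, and then closure under successor (Proposition 2.9) yields $A_{\len(A)-1}+1\in\mathcal I$. But every element of $A^{\mathcal M}$ is $\leq A_{\len(A)-1}$, so no element of $\mathcal I\cap A^{\mathcal M}$ exceeds $A_{\len(A)-1}+1$, contradicting the defining property of $\mathcal I$ as a limit point of $A$ (namely that $\mathcal I\cap A^{\mathcal M}$ be unbounded in $\mathcal I$). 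Hence $b\notin\mathcal I$, completing the proof.
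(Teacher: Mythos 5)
Your proof is correct and is precisely the unwinding of Definition~\ref{def:n-inductive} and the definition of a limit point that the paper dismisses as ``straightforward''; there is no genuinely different route. The only cosmetic remark is that in the second half it is cleanest to say that $A_{\len(A)-1}+1\in\mathcal I$ strictly dominates every element of $A^{\mathcal M}$, so $\mathcal I\cap A^{\mathcal M}$ is bounded in $\mathcal I$, contradicting the limit-point property.
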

\begin{proof}
Straightforward from Definition \ref{def:n-inductive}.
\end{proof}

To see how transfinite induction is accommodated we need some more notions concerning the ordinals below $\varepsilon_0$: Recall (e.g.\ from \cite{sommer95}) that any limit ordinal $\lambda$ is approximated by a strictly increasing ``fundamental'' sequence $(\{\lambda\}(n))_{n\in\mathbb N}$, to be computed from its Cantor normal form. This is extended to non-limit ordinals by the stipulations $\{\alpha+1\}(n)=\alpha$ and $\{0\}(n)=0$. Also recall the concept of an $\alpha$-large sequence: For an ordinal $\alpha$ and a sequence $s=\langle s_0,\dots ,s_{k-1}\rangle$ of natural numbers the ordinal $\{\alpha\}(s)$ is computed by first descending to $\{\alpha\}(s_0)$, then to the $s_1$-th element of the fundamental sequence of that ordinal, finally leading to
\begin{equation*}
 \{\alpha\}(s)=\{\cdots\{\{\alpha\}(s_0)\}(s_1)\cdots\}(s_{k-1}).
\end{equation*}
The sequence $s$ is called $\alpha$-large if we have $\{\alpha\}(s)=0$. It is called exactly $\alpha$-large if it is $\alpha$-large but no proper initial segment of it is $\alpha$-large. According to \cite[5.5.2]{sommer95} the relation $\{\alpha\}(s)=\beta$ is $\Delta_1$ in $\isigma_1$. We will write $s\in S_\alpha$ to express that $s$ is exactly $\omega^\alpha$-large. A finite set will be called (exactly) $\alpha$-large if the strictly increasing sequence which enumerates its elements has that property. The connection with ordinal induction is made by the following result:

\begin{lemma}[$\aca$]\label{lem:ordinals-large-sequences}
There is an $\isigma_1$-provably total function $H_\alpha(\beta)=s$ (in the variables $\alpha$ and $\beta$) such that $\isigma_1$ proves the following: For any $\alpha<\varepsilon_0$ the function $H_\alpha$ restricts to an order-preserving bijection 
\begin{equation*}
 H_\alpha:(\omega_2^\alpha,<)\xrightarrow{\cong} (S_\alpha,<_L),
\end{equation*}
where $<_L$ is the lexicographic ordering of sequences.
\end{lemma}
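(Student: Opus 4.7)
The plan is to build $H_\alpha(\beta)$ by a uniform recursive decomposition that mirrors the fundamental-sequence structure of ordinals below $\varepsilon_0$. I would introduce a general auxiliary function $\widetilde H_\gamma(\beta)$, defined for $\gamma\leq\varepsilon_0$ and $\beta<\omega^\gamma$, and put $H_\alpha(\beta):=\widetilde H_{\omega^\alpha}(\beta)$. The clauses are $\widetilde H_0(0)=\langle\rangle$; in the successor case $\gamma=\gamma'+1$, one decomposes $\beta$ uniquely as $\omega^{\gamma'}\cdot n+\beta'$ with $\beta'<\omega^{\gamma'}$ and sets $\widetilde H_\gamma(\beta)=\langle n\rangle*\widetilde H_{\gamma'}(\beta')$; in the limit case, one lets $n$ be the least index with $\beta<\omega^{\{\gamma\}(n)}$, lets $\delta$ be the unique ordinal satisfying $\beta=\omega^{\{\gamma\}(n-1)}+\delta$ (with the convention $\omega^{\{\gamma\}(-1)}:=0$), and sets $\widetilde H_\gamma(\beta)=\langle n\rangle*\widetilde H_{\{\gamma\}(n)}(\delta)$.

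Working internally in $\isigma_1$, I would verify by induction on $\gamma$ that $\widetilde H_\gamma$ is an order-preserving bijection from $(\omega^\gamma,<)$ onto $(S_\gamma,<_L)$. In the successor case, $\{\gamma'+1\}(n)=\gamma'$ for every $n$, so the exactly $(\gamma'+1)$-large sequences partition into blocks indexed by the first entry, each lex-isomorphic to $S_{\gamma'}$; stacking $\omega$ such blocks in $<_L$-order yields total type $\omega\cdot\omega^{\gamma'}=\omega^{\gamma'+1}$. In the limit case, the first-entry-$n$ block is isomorphic to $S_{\{\gamma\}(n)}$ of type $\omega^{\{\gamma\}(n)}$, and since $\{\gamma\}(n)$ strictly increases to $\gamma$ these blocks stack to $\sup_n\omega^{\{\gamma\}(n)}=\omega^\gamma$. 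The underlying arithmetic on ordinal codes, Cantor-normal-form decomposition and evaluation of the fundamental-sequence operation $\{\cdot\}(\cdot)$ is $\Delta_1$ in $\isigma_1$ by \cite[Section 5.2]{sommer95}, so each step of the induction is a finite combination of provable equivalences.

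The main obstacle is proving $\isigma_1$-provable totality uniformly in $\alpha$. The graph of $\widetilde H$ is $\Delta_1$, but I must bound the number of recursive calls by a function whose totality is available in $\isigma_1$. Each recursive call contributes exactly one entry to the output, so the recursion depth equals the length of the output sequence, and it suffices to give a primitive-recursive bound on this length in terms of $\alpha$ and $\beta$. The key point is that the first entry produced in each call is a $\Delta_1$-computable function of the current $\beta$, while the ordinal parameter strictly descends along the fundamental-sequence tree whose branching indices are bounded similarly; combining these with the finitary Cantor-normal-form encoding of $\omega^\alpha$ and $\beta$ yields such a bound. With the bound in hand, the existence of $\widetilde H_\gamma(\beta)$ is expressible by a $\Sigma_1$-formula, and $\isigma_1$-totality follows by $\Sigma_1$-induction on the recursion depth.
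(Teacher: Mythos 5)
The paper's own proof of this lemma is a single citation to Sommer's Theorem~5.12 together with the remark that the whole statement is $\Sigma_1$, so that provability in $\aca$ is unproblematic. Your proposal instead reconstructs the bijection from scratch, and the decomposition you describe --- peeling off the first entry and descending along fundamental sequences --- is essentially the right construction: the successor and limit clauses correctly partition the exactly $\gamma$-large sequences into blocks indexed by their first entry, and the limit-case order-type computation rests on additive absorption in the way you indicate. Two small points of hygiene: you write ``$S_\gamma$'' for the set of exactly $\gamma$-large sequences in your intermediate induction claim, whereas the paper reserves $S_\gamma$ for the exactly $\omega^\gamma$-large ones, so you are silently reusing the symbol at a shifted index (the end result $H_\alpha=\widetilde H_{\omega^\alpha}$ does land in the paper's $S_\alpha$, so this is only a notational hazard); and stacking $\omega$ copies of a block of type $\omega^{\gamma'}$ gives order type $\omega^{\gamma'}\cdot\omega$, not $\omega\cdot\omega^{\gamma'}$.

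The genuine soft spot is the formalization in $\isigma_1$, which is precisely the content the paper imports from Sommer. You propose to ``verify by induction on $\gamma$'' that $\widetilde H_\gamma$ is an order-preserving bijection; read literally this is transfinite induction up to $\varepsilon_0$, which is not available in $\isigma_1$ (nor in $\pa$). The fix is the one you gesture at in the last paragraph for totality, namely ordinary $\Sigma_1$-induction on the recursion depth, but then both the correctness proof \emph{and} the depth bound have to be recast in those terms, and the crucial primitive-recursive bound on the length of $\widetilde H_\gamma(\beta)$ in terms of the codes of $\gamma$ and $\beta$ is asserted (``combining these with the finitary Cantor-normal-form encoding \dots yields such a bound'') rather than argued. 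That bound is exactly the nontrivial technical content of Sommer's theorem; a self-contained proof would need to actually exhibit a measure on the pair of codes that strictly decreases at each recursive call. So your route is a plausible reconstruction of what Sommer does, but as written it has a real gap where the paper instead cites.
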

\begin{proof}
 This is \cite[Theorem 5.12]{sommer95}. Note that the whole statement is $\Sigma_1$, so provability in $\aca$ is no issue.
\end{proof}

Inspired by this correspondence one formulates principles of ``sequence induction'': For a formula $\varphi\equiv\varphi(\vec x,s)$ with induction variable $s$ and parameters $\vec x$ we put
 \begin{alignat*}{3}
& \progs_{s.\varphi}(\vec x,\beta) && :\equiv\forall_{s\in S_\beta}(\forall_{s'\in S_\beta}(s'<_L s\rightarrow\varphi(\vec x,s'))\rightarrow\varphi(\vec x,s)),\\
& \tis_{s.\varphi}(\alpha) && :\equiv\forall_{\vec x}(\progs_{s.\varphi}(\vec x,\alpha)\rightarrow\forall_{s\in S_\alpha}\varphi(\vec x,s)).
\end{alignat*}
Note that being progressive is now relative to an ordinal parameter $\beta$. This is necessary because the correspondence between ordinals and sequences of numbers is not absolute but rather depends on the initial segment of the ordinals in which we are interested. Let us reduce ordinal induction to induction over large sequences:

\begin{lemma}[$\aca$]\label{lem:ordinal-from-sequence-induction}
For any $n\geq 1$ and any $\Pi_n$-formula $\psi(\vec x,\gamma)$ there is a $\Pi_n$-formula $\varphi(\vec x,\delta,s)$ such that we have $\isigma_1\vdash\forall_\alpha(\tis_{s.\varphi}(\alpha)\rightarrow\ti_{\gamma.\psi}(\omega_2^\alpha))$.
\end{lemma}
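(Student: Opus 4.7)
The plan is to transport $\psi$ along the bijection $H_\alpha$ of Lemma \ref{lem:ordinals-large-sequences}. Explicitly, define
\begin{equation*}
 \varphi(\vec x,\delta,s) :\equiv \forall_\gamma\bigl(H_\delta(\gamma)=s\rightarrow\psi(\vec x,\gamma)\bigr).
\end{equation*}
Since the graph of $H_\delta$ is $\Delta_1$ in $\isigma_1$, and since prefixing a $\Delta_1$-premiss plus a universal quantifier to a $\Pi_n$-formula leaves it $\Pi_n$ (for $n\geq 1$), the formula $\varphi$ lies in $\Pi_n$ as required. Because $H_\delta$ is a bijection onto $S_\delta$, for $s\in S_\delta$ there is exactly one $\gamma<\omega_2^\delta$ verifying $H_\delta(\gamma)=s$, so $\varphi(\vec x,\delta,s)$ is equivalent to $\psi(\vec x,H_\delta^{-1}(s))$; for $s\notin S_\delta$ it is vacuously true, which is harmless since $\tis$ only speaks about $s\in S_\alpha$.

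The argument will then be carried out inside $\isigma_1$. Fix $\alpha$ and assume $\tis_{s.\varphi}(\alpha)$; take parameters $\vec x$ with $\prog_{\gamma.\psi}(\vec x)$ and aim to show $\forall_{\gamma<\omega_2^\alpha}\psi(\vec x,\gamma)$. By the order-preserving bijection $H_\alpha\colon(\omega_2^\alpha,<)\cong(S_\alpha,<_L)$, this is the same as $\forall_{s\in S_\alpha}\varphi(\vec x,\alpha,s)$, which in turn will follow from $\tis_{s.\varphi}(\alpha)$ once we verify $\progs_{s.\varphi}(\vec x,\alpha)$.

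To see the latter, fix $s\in S_\alpha$ and suppose $\varphi(\vec x,\alpha,s')$ holds for every $s'\in S_\alpha$ with $s'<_L s$. Set $\gamma:=H_\alpha^{-1}(s)<\omega_2^\alpha$. For any $\gamma'<\gamma$, order preservation of $H_\alpha$ gives $H_\alpha(\gamma')\in S_\alpha$ and $H_\alpha(\gamma')<_L s$, so by hypothesis $\varphi(\vec x,\alpha,H_\alpha(\gamma'))$ holds, i.e.\ $\psi(\vec x,\gamma')$. Thus $\forall_{\gamma'<\gamma}\psi(\vec x,\gamma')$; the assumption $\prog_{\gamma.\psi}(\vec x)$ then yields $\psi(\vec x,\gamma)$, which unfolds to $\varphi(\vec x,\alpha,s)$.

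I do not expect any serious obstacle here: the content is purely the bijection of Lemma \ref{lem:ordinals-large-sequences}, and the only point that deserves care is the complexity bookkeeping, namely that the $\Delta_1$-in-$\isigma_1$ graph of $H_\delta$ really does keep $\varphi$ inside $\Pi_n$ and that the whole translation is formalizable in $\isigma_1$ (which it is, because all manipulations of $H_\alpha$ used above reduce to its provably total totality and order-preservation, both of which are available there).
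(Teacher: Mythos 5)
Your approach is essentially that of the paper, which defines
\begin{equation*}
\varphi(\vec x,\delta,s):\equiv\forall_{\beta<\omega_2^\delta}(H_\delta(\beta)=s\rightarrow\psi(\vec x,\beta)),
\end{equation*}
and then verifies, via Lemma~\ref{lem:ordinals-large-sequences}, that $\prog_{\gamma.\psi}(\vec x)$ entails $\progs_{s.\varphi}(\vec x,\alpha,\alpha)$ for all $\alpha$. You fill in the progressivity verification explicitly, which is fine. The one substantive discrepancy is that your $\varphi$ uses an \emph{unbounded} quantifier $\forall_\gamma$ whereas the paper bounds it by $\omega_2^\delta$, and this is not merely cosmetic. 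In your last step, having derived $\psi(\vec x,\gamma)$ for the unique $\gamma:=H_\alpha^{-1}(s)<\omega_2^\alpha$, you assert this ``unfolds to $\varphi(\vec x,\alpha,s)$''. But your $\varphi(\vec x,\alpha,s)$ demands $\psi(\vec x,\gamma')$ for \emph{every} $\gamma'$ with $H_\alpha(\gamma')=s$, including any $\gamma'\geq\omega_2^\alpha$ that the arithmetized total function $H_\alpha$ may happen to send to $s$. Lemma~\ref{lem:ordinals-large-sequences} speaks only of the \emph{restriction} of $H_\alpha$ to $\omega_2^\alpha$ and gives you no control over out-of-range behaviour, so this step does not close under your definition of $\varphi$; whether it accidentally holds depends on implementation details of $H_\alpha$ you should not rely on. Bounding the quantifier by $\omega_2^\delta$ repairs this immediately (injectivity on the restricted domain makes $\gamma$ the only witness in scope), and as a side benefit it makes the $\Pi_n$-complexity count trivial, without needing the $\Delta_1$-collapse argument you invoke. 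Everything else in your write-up matches the intended proof.
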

The crucial point is that the quantification over $\alpha$ occurs in the object theory $\isigma_1$. This is required because we want to use the implication for a non-standard ordinal, as opposed to \cite{sommer95}.
\begin{proof}
Define
\begin{equation*}
\varphi(\vec x,\delta,s):\equiv\forall_{\beta<\omega_2^\delta}(H_\delta(\beta)=s\rightarrow\psi(\vec x,\beta)).
\end{equation*}
Using Lemma \ref{lem:ordinals-large-sequences} one can verify
\begin{equation*}
\isigma_1\vdash\forall_{\vec x}(\prog_{\gamma.\psi}(\vec x)\rightarrow\forall_\alpha\progs_{s.\varphi}(\vec x,\alpha,\alpha)).
\end{equation*}
The claim of the lemma is readily deduced.
\end{proof}

Inspired by these considerations we extend the notion of an $n$-inductive pair, to ensure that the initial segment $\mathcal I$ of $\mathcal M$ satisfies some amount of ordinal induction:

\begin{definition}[cf.\ {\cite[Lemma 5.24]{sommer95}}]\label{def:alpha-inductive}
We say that $A=\langle A_0,\dots ,A_{\len(A)-1}\rangle$ and $d=\langle d_0,\dots ,d_{n-1}\rangle$ form an $(n,\alpha)$-inductive pair if they form an $n$-inductive pair, with $n\geq 1$, and additionally the following holds: Consider an arbitrary number $i<\len(A)-1$, a $\Pi_{n-1}$-formula $\varphi(y_0,\dots ,y_{k-1},s,w)$ with bounded variant $\varphi^*(y_0,\dots ,y_{k-1},s,w;z_0^\varphi,\dots ,z_{n-2}^\varphi)$, and a parameter list $p=\langle p_0,\dots ,p_{k-1}\rangle$. Assume that there is an $s\in S_\alpha$ and a number $w$ such that we have $\langle1,\varphi,p,s*\langle w\rangle\,\rangle< A_i$ and such that $\varphi^*(p_0,\dots ,p_{k-1},s,w;d_0,\dots ,d_{n-2})$ is true. Then there is an $s^0$ which is $<_L$-minimal among the elements of $S_\alpha$ for which the following holds:
\begin{equation}\label{eq:condition-ordinal-induction-lexicographic}
\parbox{9cm}{There is a $w$ such that we have $\langle 1,\varphi,p,s^0*\langle w\rangle\,\rangle<d_{n-1}$ and such that $\varphi^*(p_0,\dots ,p_{k-1},s^0,w;d_0,\dots ,d_{n-2})$ is true.}
\end{equation}
Furthermore, $s^0$ has code below $A_{i+1}$ and the corresponding instance of (\ref{eq:condition-ordinal-induction-lexicographic}) holds with a witness $w$ that is smaller than $A_{i+1}$.
\end{definition}

Let us show that a limit point of an $(n,\alpha)$-inductive pair satisfies a certain amount of transfinite induction.

\begin{proposition}[$\aca$, cf.\ {\cite[Theorem 5.25]{sommer95}}]\label{prop:I-satisfies-trasnfinite-induction}
In the situation of Proposition \ref{prop:sigma_n-satisfaction-I}, assume that $(A,d)$ is $(n,\alpha)$-inductive for some ordinal $\alpha\in\mathcal M$ (all in the sense of $\mathcal M$), and with $n\geq 1$. If $\alpha$ lies in $\mathcal I$ then we have $\mathcal I\vDash\ti_{\Pi_n}(\omega_2^\alpha)$.
\end{proposition}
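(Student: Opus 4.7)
The plan is to reduce $\mathcal I \vDash \ti_{\Pi_n}(\omega_2^\alpha)$ to a sequence-induction statement and then to extract a minimal counterexample via the new $(n,\alpha)$-inductive clause of Definition \ref{def:alpha-inductive}. Working in $\aca$, a uniform version of Lemma \ref{lem:ordinal-from-sequence-induction} applied to the $\Pi_n$-truth predicate gives a single $\Pi_n$-formula
\[
\varphi(\xi,\delta,s) :\equiv \forall_{\beta<\omega_2^\delta}(H_\delta(\beta)=s \rightarrow \true_{\Pi_n}(\xi(\dot\beta)))
\]
(with $\xi$ ranging over codes of $\Pi_n$-formulas in one free variable) for which $\isigma_1 \vdash \forall_\alpha(\tis_{s.\varphi}(\alpha)\rightarrow\ti_{\Pi_n}(\omega_2^\alpha))$. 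Since $\mathcal I\vDash\isigma_n$ by Lemma \ref{lem:I-satisfies-isigman} and $\isigma_1$-consequences between $\Pi_3(\Sigma_n)$-formulas are absolute by Lemma \ref{lem:I-satisfies-consequences}, it suffices to show $\mathcal I\vDash\tis_{s.\varphi}(\alpha)$.

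To that end, I would argue by contradiction: suppose $\mathcal I\vDash\progs_{s.\varphi}(\vec p,\alpha)$ but $\mathcal I\vDash\neg\varphi(\vec p,s^*)$ for some $s^*\in S_\alpha\cap\mathcal I$. Decomposing $\neg\varphi$ into its $\Sigma_n$-form $\exists w\,\chi(\vec p,s,w)$ with $\chi\in\Pi_{n-1}$, equation (\ref{eq:equivalence-i-m-bounded}) translates this to $\mathcal M\vDash \exists w\leq d_{n-1}\,\chi^*(\vec p,s^*,w;d_0,\dots,d_{n-2})$. Applying clause (iii) of Definition \ref{def:n-inductive} to $\chi$ regarded as a formula in the single free variable $w$ with parameter tuple $(\vec p,s^*)$ shrinks the witness to some $w\leq A_{i+1}$ for any sufficiently large $i$; clause (i) together with the polynomial bound on sequence codes then places $\langle 1,\chi,(\vec p,s^*),s^*\!\ast\langle w\rangle\rangle$ below some $A_j$. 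This activates the hypothesis of the $(n,\alpha)$-inductive clause, which supplies an $s^0\in S_\alpha$ that is $<_L$-minimal among those admitting a witness $w$ with $\langle 1,\chi,(\vec p,s^0),s^0\!\ast\langle w\rangle\rangle<d_{n-1}$ and $\chi^*(\vec p,s^0,w;\vec d)$ true, and moreover locates $s^0$ and a corresponding witness $w^0$ below $A_{j+1}$, hence in $\mathcal I$.

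To finish, every $s'\in S_\alpha\cap\mathcal I$ with $s'<_L s^0$ and every $w\in\mathcal I$ yield a code $\langle 1,\chi,(\vec p,s'),s'\!\ast\langle w\rangle\rangle$ bounded by $A_{\len(A)-1}\leq d_{n-1}$, via clause (ii) of Definition \ref{def:n-inductive}, so the $<_L$-minimality of $s^0$ in $\mathcal M$ excludes $\chi^*(\vec p,s',w;\vec d)$; equation (\ref{eq:equivalence-i-m-bounded}) translates this back to $\mathcal I\vDash\varphi(\vec p,s')$, and progressiveness at $s^0$ then forces $\mathcal I\vDash\varphi(\vec p,s^0)$, contradicting the witness $w^0$. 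The main obstacle is the bookkeeping of bounds: the hypothesis of Definition \ref{def:alpha-inductive} demands a pair $(s^*,w)$ jointly confined below some $A_i$, which is why clause (iii) of Definition \ref{def:n-inductive} must first be used to pull $w$ down from $d_{n-1}$ into $\mathcal I$; dually, the minimality delivered by the conclusion is only against counterexamples whose code is below $d_{n-1}$, and one must verify that this weak external bound still captures every pair $(s',w)\in\mathcal I\times\mathcal I$ -- which rests precisely on $A_{\len(A)-1}\leq d_{n-1}$.
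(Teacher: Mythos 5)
Your proposal follows the same overall strategy as the paper: reduce $\ti_{\Pi_n}(\omega_2^\alpha)$ to a sequence-induction statement via Lemma \ref{lem:ordinal-from-sequence-induction} and Lemma \ref{lem:I-satisfies-consequences}, bound the data of a counterexample so as to activate the new clause of Definition \ref{def:alpha-inductive}, and transfer the resulting $<_L$-minimal $s^0$ together with its witness $w^0$ into $\mathcal I$. You also correctly pinpoint the crux: minimality of $s^0$ against codes below $d_{n-1}$ still excludes all counterexamples from $\mathcal I\times\mathcal I$, precisely because any tuple coded from elements of $\mathcal I$ falls below some $A_i\leq A_{\len(A)-1}\leq d_{n-1}$ (clause (ii) of Definition \ref{def:n-inductive}).

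Two remarks on the execution. First, the detour through clause (iii) of Definition \ref{def:n-inductive} to pull the witness $w$ down from $d_{n-1}$ into $\mathcal I$ is unnecessary: Proposition \ref{prop:sigma_n-satisfaction-I} already gives Tarski's conditions in $\mathcal I$, so from $\mathcal I\vDash\neg\varphi(\vec p,s^*)$ one extracts $w\in\mathcal I$ with $\mathcal I\vDash\chi(\vec p,s^*,w)$ directly, and the whole tuple then lies below some $A_i\in\mathcal I$ because $\mathcal I$ is a limit point of $A$. Second, be careful with the parameter slot in Definition \ref{def:alpha-inductive}: the activation and minimality conditions involve tuples $\langle 1,\chi,\vec p,s*\langle w\rangle\rangle$ with a \emph{fixed} parameter list $\vec p$, while $s$ is the variable being minimized. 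Your tuples $\langle 1,\chi,(\vec p,s^*),s^*\!\ast\langle w\rangle\rangle$ and $\langle 1,\chi,(\vec p,s^0),s^0\!\ast\langle w\rangle\rangle$ put the minimization variable into the parameter slot as well, which if taken literally makes the minimization ill-posed. The underlying argument is sound once these tuples are corrected to $\langle 1,\chi,\vec p,s*\langle w\rangle\rangle$.
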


Note that $\ti_{\Pi_n}(\omega_2^\alpha)$ is $\Pi_3(\Sigma_n)$ after prefixing quantifiers (cf.\ Lemma \ref{lem:extended-satisfaction-I}).

\begin{proof}
The proof is essentially that of \cite[Theorem 5.25]{sommer95}, but we repeat it to demonstrate the functioning of our terminology. The crucial difference to \cite{sommer95} is that $\alpha$ may now be non-standard. By Lemma \ref{lem:ordinal-from-sequence-induction} and Lemma \ref{lem:I-satisfies-consequences} it suffices to verify the sequence induction principle $\mathcal I\vDash\tis_{s.\psi}(\alpha)$ for an appropriate $\Pi_n$-formula $\psi$. Write $\psi\equiv\psi(\vec y,s)\equiv\forall_w\varphi(\vec y,s,w)$. By Tarski's conditions for $\mathcal I$ it suffices to show the following: Given arbitrary parameters $\vec p\in\mathcal I$, assume that for some $s\in\mathcal I$ we have (i) $\mathcal I\vDash s\in S_\alpha$ and (ii) $\mathcal I\nvDash\psi(\vec p,s)$. We must find a $<_L$-minimal $s^0\in\mathcal I$ which satisfies (i) and (ii).\\
From (ii) we infer that $\mathcal I\nvDash\varphi(\vec p,s,w)$ holds for some $w\in\mathcal I$. Write $\widetilde\varphi$ for the $\Pi_{n-1}$-formula which results from $\neg\varphi$ by pulling the negation under the unbounded quantifiers. Thus we have $\mathcal I\vDash\widetilde\varphi(\vec p,s,w)$, and Proposition \ref{prop:sigma_n-satisfaction-I} transforms this into $\mathcal M\vDash\widetilde\varphi^*(\vec p,s,w;d_0,\dots ,d_{n-2})$. In view of the ``It's snowing''-Lemma $\mathcal M$ also satisfies the statement ``$\widetilde\varphi^*(\vec p,s,w;d_0,\dots ,d_{n-2})$ is true''. Let us verify the other assumptions from Definition \ref{def:alpha-inductive}: Because of $n\geq 1$ any formula which is $\Delta_1$ in $\isigma_1$ is absolute between $\mathcal I$ and $\mathcal M$. Thus (i) yields $\mathcal M\vDash s\in S_\alpha$. Next, note that $\widetilde\varphi$ is a standard formula and that the parameter list $\vec p$ has standard length. Thus (the code of) $\widetilde\varphi$ lies in $\mathcal I$, and so does the tuple $\langle 1,\widetilde\varphi,\vec p,s*\langle w\rangle\,\rangle$. Since $\mathcal I$ is a limit point of $A$ we have $\langle 1,\widetilde\varphi,\vec p,s*\langle w\rangle\,\rangle<A_i$ for some $i<\len(A)-1$ with $A_i\in\mathcal I$. Let $s^0$ be provided by Definition~\ref{def:alpha-inductive}. From $s^0\leq A_{i+1}$ we infer $s^0\in\mathcal I$. Absoluteness of $\Delta_1$-formulas gives $\mathcal I\vDash s^0\in S_\alpha$, which is (i) above. Also, Definition \ref{def:alpha-inductive} tells us that $\mathcal M\vDash\widetilde\varphi^*(\vec p,s^0,w;d_0,\dots ,d_{n-2})$ holds for some $w\leq A_{i+1}$, i.e.\ $w$ lies in $\mathcal I$. We conclude $\mathcal I\vDash\widetilde\varphi(\vec p,s^0,w)$ and then $\mathcal I\nvDash\varphi(\vec p,s^0,w)$, as required for (ii) above. The $<_L$-minimality of $s^0$ in $\mathcal I$ is similarly deduced from the minimality provided by Definition \ref{def:alpha-inductive}.
\end{proof}

Next, we show that Peano Arithmetic proves the existence of $(n,\alpha)$-inductive pairs, under the assumption that certain large sets exist. This will allow us to apply Proposition \ref{prop:I-satisfies-trasnfinite-induction}. We need an auxiliary notion:

\begin{definition}\label{def:alpha-admissible}
For $n\geq 1$, an $n$-inductive pair $(A,d)$ is called $\alpha$-admissible if the following holds: Consider $i<\len(A)-1$, a $\Pi_{n-1}$-formula $\varphi(y_0,\dots ,y_{k-1},s,x)$ with bounded variant $\varphi^*(y_0,\dots ,y_{k-1},s,x;z_0^\varphi,\dots ,z_{n-2}^\varphi)$, a list $p=\langle p_0,\dots ,p_{k-1}\rangle$ of parameters, and a sequence $s'$. If we have $\langle 1,\varphi,p,s'\rangle<A_i-1$ then the statement
\begin{equation}\label{eq:alpha-admissible}
 \parbox{10cm}{``the sequence $s$ is exactly $\omega^\alpha$-large, $s'$ is an initial segment of $s$, and the $\Delta_0$-formula $\varphi^*(p_0,\dots ,p_{k-1},s,w;d_0,\dots ,d_{n-2})$ is true''}
\end{equation}
holds for some $s,w$ with $\langle 1,\varphi,p,s*\langle w\rangle\,\rangle<A_{i+1}$ if it holds for some $s,w$ with $\langle1,\varphi,p,s*\langle w\rangle\,\rangle<d_{n-1}$.
\end{definition}

We remark that the proof of \cite[Lemma 5.24]{sommer95} treats Definition \ref{def:alpha-admissible} as a special case of Definition \ref{def:n-inductive}(iii). To do so, one pulls the truth predicate around the whole statement (\ref{eq:alpha-admissible}), such that this whole statement becomes the formula $\varphi$ of Definition \ref{def:n-inductive}. Then, however, the ordinal $\alpha$ becomes part of the parameter list $p$, and the condition $\langle 0,\varphi,p,0\rangle<A_i-1$ forces us to consider bounds on its code. This is no problem if $\alpha$ is standard, as in \cite{sommer95}, but it becomes an issue when we consider non-standard ordinals. The notion of $\alpha$-admissibility disentangles $\alpha$ and $p$, and then \cite{sommer95} extends to non-standard ordinals:

\begin{proposition}[$\pa$, cf.\ {\cite[Lemma 5.10, 5.11(a)]{sommer95}}]\label{prop:construct-n-inductive}
Assume that we have $F_{\omega_{n-1}^\gamma}(a)=b$ for ordinals $\gamma\geq 1$ and $\alpha$ and numbers $n\geq 1$, $a\geq n+1$ and $b$. Then there is an $n$-inductive $\alpha$-admissible pair $(A,d)$ in $[a,b]$, such that $A$ is $\gamma$-large.
\end{proposition}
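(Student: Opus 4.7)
The plan is to adapt Sommer's \cite[Lemma 5.10]{sommer95} by induction on $\gamma\geq 1$ inside $\pa$. The claim is $\Sigma_1$ in its parameters (it asserts the existence of a pair $(A,d)$), so no difficulty arises from allowing $\gamma$ or $\alpha$ to be non-standard at the meta-level. The recursion follows the defining clauses of the fast-growing hierarchy: the successor step is driven by $F_{\omega_{n-1}^{\delta+1}}(a)=F_{\omega_{n-1}^{\delta}}^{a+1}(a)$, while the limit step uses $F_{\omega_{n-1}^\lambda}(a)=F_{\omega_{n-1}^{\{\lambda\}(a)}}(a)$. The base case $\gamma=1$ is treated by a direct construction in which the bounds $d_{n-1}>\dots>d_0$ are selected from the top down so that, at each level $m$, the bound $d_{m-1}$ dominates the witnesses demanded by the $\Pi_m$-formulas of Definition \ref{def:n-inductive}(iii) and Definition \ref{def:alpha-admissible}; the fast growth of $F_{\omega_{n-1}}(a)=b$ ensures that enough room above $a$ is available.

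For the successor step set $b_0:=a$ and $b_{i+1}:=F_{\omega_{n-1}^\delta}(b_i)$ for $0\leq i\leq a$; the inductive hypothesis furnishes a $\delta$-large $n$-inductive $\alpha$-admissible pair $(A^{(i)},d^{(i)})$ in $[b_i+n+1,b_{i+1}]$. One concatenates the $A^{(i)}$'s into a $(\delta+1)$-large sequence $A$ and takes the global bound $d$ to be dominant over all the $d^{(i)}$'s (for instance $d:=d^{(a)}$ works if the $d^{(i)}$'s have been chosen monotonically). Conditions (i) and (ii) of Definition \ref{def:n-inductive} are immediate; condition (iii) and the $\alpha$-admissibility clause propagate segment by segment, because the code threshold $A_i-1$ bounds the query so tightly that, together with the squaring condition $A_i^2\leq A_{i+1}$, a witness below $A_{i+1}$ (respectively below $d_{n-1}$) can be located within the segment in which the inductive hypothesis already provided it. The limit case is analogous, invoking the inductive hypothesis at $\{\gamma\}(a)$.

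The main obstacle is precisely the reason for introducing Definition \ref{def:alpha-admissible}: when $\alpha$ is non-standard, the strategy of \cite{sommer95} of folding statement (\ref{eq:alpha-admissible}) into condition (iii) of Definition \ref{def:n-inductive} fails, because the code of $\alpha$ would then contribute to the parameter list $p$ and would have to be bounded by the standard quantity $A_i-1$. In our reformulation $\alpha$ is a spectator that does not enter any code bound; with this disentanglement, Sommer's counting argument extends essentially verbatim, the sequences $s$ witnessing $\omega^\alpha$-largeness being controlled by descent through $\omega^\alpha$ and comfortably accommodated by the gap between consecutive $A_i$'s supplied by the fast-growing hierarchy. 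Formalization in $\pa$ is routine, since the induction on $\gamma$ is over the $\Sigma_1$ statement of the proposition, and all combinatorial manipulations (sequence concatenation, lookup of maxima, the $\Delta_1$-graph of $F_{\omega_{n-1}^\gamma}$) are primitive recursive.
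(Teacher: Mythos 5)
There is a genuine gap. Your proposal inducts on $\gamma$, producing $n$-inductive $\alpha$-admissible pairs $(A^{(i)},d^{(i)})$ on the sub-intervals $[b_i+n+1,b_{i+1}]$ and then concatenating the $A^{(i)}$ while taking a single ``dominant'' bound, say $d:=d^{(a)}$. This step does not go through. The trouble is that in condition (iii) of Definition~\ref{def:n-inductive} and in clause (\ref{eq:alpha-admissible}) of Definition~\ref{def:alpha-admissible}, the entries $d_0,\dots,d_{m-1}$ appear \emph{inside} the bounded variant $\varphi^*(\,\cdot\,;d_0,\dots,d_{m-1})$, i.e.\ as bounds on the inner alternating quantifiers. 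Changing these bounds changes the truth value of $\varphi^*$, and because the inner quantifiers alternate between $\forall$ and $\exists$, enlarging $d_j$ is not truth-monotone in either direction. Consequently the inductive hypothesis, which gives condition (iii) relative to the \emph{local} bounds $d^{(i)}$ on segment $i$, says nothing about the same formula evaluated with the \emph{global} bounds $d^{(a)}$. There is also no evident way to force the pairs produced by the inductive hypothesis at distinct stages to share the same inner bounds $d_0,\dots,d_{n-2}$, which is what you would really need. (A similar issue arises in your limit case, where the inductive hypothesis at $\{\gamma\}(a)$ is invoked on a single large interval; there the bound structure happens to be inherited, but the successor case is where the concatenation-of-pairs idea breaks.)

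The paper avoids this by structuring the induction on $n$ (the formula complexity), not on $\gamma$. For each fixed $n$, the sequence $A$ together with a companion sequence of upper bounds $B$ is built in a single pass over the interval $[a,b]$; at each step an interval from a Ketonen--Solovay-style decomposition is chosen by a pigeonhole argument on the at most $A_l$ relevant minimal witnesses, and the bound is set once, $d=\langle A_{\len(A)-1}\rangle$ in the case $n=1$. In the step $n\leadsto n+1$ the previous inner bounds $d_0,\dots,d_{n-1}$ stay frozen and only a new top bound is added, which is exactly why the sensitivity of $\varphi^*$ to its inner bounds is never an issue. The ordinal parameter $\gamma$ enters not as the induction variable but through the largeness of the starting interval: $F_{\omega_{n-1}^\gamma}(a)=b$ guarantees (via Sommer's Propositions~5.8 and~5.9) that $[a-2,b]$ is $(\omega^\gamma+1)$-large, which is what drives the decomposition and yields the exactly $\gamma$-large $A$. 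Your observation about disentangling $\alpha$ from the parameter list is correct and is indeed the key to handling non-standard $\alpha$, but it must be combined with the single-pass construction over a large interval, not with a recursion on $\gamma$ that would require stitching pairs with incompatible bounds.
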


\begin{proof}
 The proof is by induction on $n$, and very similar to the proofs of \cite[Lemma 5.10, 5.11(a)]{sommer95}. Let us review the case $n=1$. The idea is to build sequences $A$ and $B$ with the following properties:
\begin{enumerate}[label=(\roman*')]
\item For $i<\len(A)-1$ we have $F_{\{\gamma\}\langle A_0,\dots ,A_i\rangle}(A_i)\leq A_{i+1}$.
\item We have $a-2\leq A_0<\dots <A_{\len(A)-1}\leq B_{\len(B)-1}\leq\dots\leq B_0\leq b$, as well as $\len(A)=\len(B)$.
\item Consider $i<\len(A)-1$, a bounded formula $\varphi(y_0,\dots ,y_{k-1},x)$ and a parameter list $p=\langle p_0,\dots ,p_{k-1}\rangle$ with $\langle 0,\varphi,p,0\rangle<A_i-1$. Then the formula $\varphi(p_0,\dots ,p_{k-1},w)$ is true for some $w\leq A_{i+1}$ if it is true for some $w\leq B_{i+1}$.
\item Consider $i<\len(A)-1$, a bounded formula $\varphi(y_0,\dots ,y_{k-1},s,x)$, a parameter list $p=\langle p_0,\dots ,p_{k-1}\rangle$ and a sequence $s'$ with $\langle 1,\varphi,p,s'\rangle<A_i-1$. Then statement (\ref{eq:alpha-admissible}) holds for some $s,w$ with $\langle 1,\varphi,p,s*\langle w\rangle\,\rangle+1\leq A_{i+1}$ if it holds for some $s,w$ with $\langle 1,\varphi,p,s*\langle w\rangle\,\rangle+1\leq B_{i+1}$.
\item For any $i<\len(A)$ the set $[A_i,B_{i}]\cap [a-2,b]$ is $(\omega^{\{\gamma\}\langle A_0,\dots ,A_{i-1}\rangle}+1)$-large.
\end{enumerate}
By induction on $l$ we prove the following:
\begin{equation}\label{eq:claim-construct-step-inductive-sequences}
\parbox{11cm}{There are sequences $A$ and $B$, both of length $l+1$, such that the following holds for all $j\leq l$: Either $\langle A_0,\dots ,A_{j-1}\rangle$ is $\gamma$-large or the sequences $\langle A_0,\dots ,A_j\rangle$ and $\langle B_0,\dots ,B_j\rangle$ fulfill conditions (i') to (v').}
\end{equation}
In the base case $l=0$ we set $A:=\langle a-2\rangle$ and $B:=\langle b\rangle$. Condition (ii') is immediate, and conditions (i',iii',iv') are void because of $\len(A)-1=0$. For condition (v') we need to check that the set $[a-2,b]$ is $(\omega^\gamma+1)$-large. This follows from the assumption $F_\gamma(a)=b$ by \cite[Proposition 5.8]{sommer95}. We come to the induction step $l\leadsto l+1$: Let $A$ and $B$ be the sequences given by the induction hypothesis. We may assume that $A=\langle A_0,\dots ,A_l\rangle$ is not $\gamma$-large. In particular, $\langle A_0,\dots ,A_{l-1}\rangle$ is not $\gamma$-large, and the induction hypothesis tells us that $A$ and $B$ satisfy (i') to (v'). By condition (v') the set $[A_l,B_l]\subseteq [a-2,b]$ is $(\omega^{\{\gamma\}\langle A_0,\dots ,A_{l-1}\rangle}+1)$-large. Using \cite[Proposition 5.9]{sommer95} we can write
\begin{equation*}
 [A_l,B_l]=\{A_l,A_l +1\}\sqcup I^1\sqcup\dots\sqcup I^{A_l+1}
\end{equation*}
as a disjoint union, such that each interval $I^i$ is $\omega^{\{\gamma\}\langle A_0,\dots ,A_l\rangle}$-large (thus in particular non-empty). To extend $A$ and $B$ as required for the induction step, we would like to pick $A_{l+1}:=\max(I^i)$ and $B_{l+1}:=\max(I^{i+1})$ for some $1\leq i\leq A_l$. Conditions (iii') and (iv') will be satisfied if certain minimal witnesses do not lie in the interval $I^{i+1}$. The assumptions $\langle 0,\varphi,p,0\rangle<A_l-1$ and $\langle 1,\varphi,p,s'\rangle<A_l-1$ ensure that there are less than $A_l$ relevant witnesses. Thus we can pick a suitable interval by the pigeonhole principle. The construction also validates (ii') and~(v'). Condition (i') follows since $[A_l+1,A_{l+1}]$ is $\omega^{\{\gamma\}\langle A_0,\dots ,A_l\rangle}$-large, using \cite[Proposition 5.8, 5.9]{sommer95}.\\
Now let $A$ and $B$ be provided by (\ref{eq:claim-construct-step-inductive-sequences}), for $l=b-a+3$. Since $A$ cannot satisfy condition (ii') it must be $\gamma$-large. Shortening the sequences if necessary, we can assume that $A$ is exactly $\gamma$-large. Still by (\ref{eq:claim-construct-step-inductive-sequences}) it follows that (the shortened) $A$ and $B$ satisfy conditions (i') to (v'). We set $d:=\langle A_{\len(A)-1}\rangle$. It is immediate that $(A,d)$ has most properties of a $1$-inductive $\alpha$-admissible pair in $[a,b]$. What remains to be checked is that $A_i^2\leq A_{i+1}$ holds for all $i<\len(A)-2$. For such an $i$ we must have $\{\gamma\}\langle A_0,\dots ,A_i\rangle\geq 2$, and then (i') combined with \cite[Proposition~5.4]{sommer95} allows us to conclude.\\
For the induction step $n\leadsto n+1$ one argues similarly, replacing the interval $[a-2,b]$ by the set $\{a-(n+2),A_0,A_1,\dots ,A_{\len(A)-1}\}$ given by the induction hypothesis. We refer to the proof of \cite[Lemma 5.11(a)]{sommer95} for details. 
\end{proof}

Building on this, we can construct $(n,\alpha)$-inductive pairs:

\begin{proposition}[$\pa$]\label{prop:alpha-admissible-gives-alpha-inductive}
For numbers $n\geq 1$ and $c$ and an ordinal $\alpha$, consider an $(\omega^\alpha\cdot c)$-large sequence $A$ such that $(A,d)$ is an $n$-inductive $\alpha$-admissible pair in the interval $[a,b]$. Then there is an $(n,\alpha)$-inductive pair $(B,d)$ in $[a,b]$ with~$\len(B)=c$.
\end{proposition}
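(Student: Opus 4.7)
The plan is to partition $A$ into $c$ consecutive $\omega^\alpha$-large subsequences and let $B$ list their starting points. First, using Sommer's standard block decomposition of large sequences (cf.\ \cite[Propositions~5.8, 5.9]{sommer95}), I would write $A=A^{(0)}\frown A^{(1)}\frown\cdots\frown A^{(c-1)}$ with each $A^{(j)}$ exactly $\omega^\alpha$-large, and set $B_j:=\min A^{(j)}$. Then $B=\langle B_0,\dots,B_{c-1}\rangle$ is strictly increasing with $\len(B)=c$, and $(B,d)$ lies in the original interval $[a,b]$.

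Next I would verify that $(B,d)$ is still $n$-inductive. Conditions~(i) and~(ii) of Definition~\ref{def:n-inductive} are inherited from $A$, using that each block $A^{(j)}$ contains at least one squaring step between $B_j$ and $B_{j+1}$. For condition~(iii), fix $i$, a $\Pi_m$-formula $\varphi$, and parameters $p$ with $\langle 0,\varphi,p,0\rangle<B_i-1$. Letting $i_0$ be the index of $B_i$ in $A$, we have $\langle 0,\varphi,p,0\rangle<A_{i_0}-1$, so the $n$-inductivity of $(A,d)$ produces the required witness $w\leq A_{i_0+1}\leq B_{i+1}$.

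The heart of the proof is the extra $(n,\alpha)$-inductive clause from Definition~\ref{def:alpha-inductive}. Given $i<\len(B)-1$, a $\Pi_{n-1}$-formula $\varphi$, parameters $p$, and a witness $s\in S_\alpha$, $w$ with $\langle 1,\varphi,p,s*\langle w\rangle\rangle<B_i$ for which $\varphi^*(p,s,w;d_0,\dots,d_{n-2})$ is true, I would construct the $<_L$-minimal $s^0\in S_\alpha$ satisfying (\ref{eq:condition-ordinal-induction-lexicographic}) coordinate-by-coordinate, marching along the block $A^{(i)}$. At stage~$k$, with $\langle s^0_0,\dots,s^0_{k-1}\rangle$ already chosen, take $s^0_k$ to be the least value such that some extension to $s\in S_\alpha$ still witnesses (\ref{eq:condition-ordinal-induction-lexicographic}) below $d_{n-1}$. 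Applying $\alpha$-admissibility of $(A,d)$ with the prefix $s':=\langle s^0_0,\dots,s^0_k\rangle$ at the $k$-th entry of $A^{(i)}$ brings the witness down below the next entry, and monotonicity of the coding (a prefix has smaller code than its extension) keeps $\langle 1,\varphi,p,s'\rangle$ below the relevant $A_{i_0+k}-1$.

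The main obstacle I expect is making this iterative descent fit inside the single block $A^{(i)}$: the length of an $s\in S_\alpha$ can be large, so we need to match it against the length of $A^{(i)}$. I would handle this by interpreting both quantities as encoding the descent through the fundamental-sequence approximation of $\omega^\alpha$ used in Sommer's treatment, so that each additional coordinate of~$s^0$ consumes exactly one step of the $\omega^\alpha$-largeness of $A^{(i)}$. Once $s^0$ is fully determined, the required bounds $s^0,w\leq B_{i+1}$ follow from the construction, and lex-minimality is immediate from the greedy choice at each stage.
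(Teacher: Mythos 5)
Your architecture reproduces the paper's proof step for step: decompose $A$ into $c$ consecutive exactly $\omega^\alpha$-large blocks via \cite[Proposition~5.9]{sommer95}, let $B$ collect the block minima (so that $n$-inductivity is inherited since $B$ is a subsequence of $A$), and build $s^0$ greedily, invoking $\alpha$-admissibility coordinate by coordinate as you march along the block $A^{(i)}$.

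The gap is exactly where you flag the ``main obstacle.'' Your proposed resolution --- ``each additional coordinate of $s^0$ consumes exactly one step of the $\omega^\alpha$-largeness of $A^{(i)}$'' --- is a hand-wave: nothing in the construction literally ties a coordinate of $s^0$ to a unit of $A^{(i)}$'s largeness. What actually closes the argument is a quantitative bound that your own $\alpha$-admissibility steps implicitly produce but you never exploit: the induction driven by $\alpha$-admissibility and coding-monotonicity yields $\langle 1,\varphi,p,\langle s^0_0,\dots,s^0_k\rangle\rangle < A^{(i)}_k - 1$, and in particular $s^0_k\leq A^{(i)}_k$ for every $k$. Since $A^{(i)}$ is $\omega^\alpha$-large and $s^0$ is an elementwise-smaller sequence of the same length, the monotonicity of largeness (again \cite[Proposition~5.9]{sommer95}) shows that $s^0$ is $\omega^\alpha$-large, hence has an exactly $\omega^\alpha$-large prefix. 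You then need to truncate $s^0$ to that prefix, observe that the exactly $\omega^\alpha$-large end-extension $s$ produced by the last greedy step must \emph{equal} the truncated $s^0$ (no proper extension of an exactly $\omega^\alpha$-large sequence is again in $S_\alpha$), and finally apply $\alpha$-admissibility \emph{once more} across the block boundary between the last entry of $A^{(i)}$ and $B_{i+1}=A^{(i+1)}_0$ to obtain the witness $w$ with $\langle 1,\varphi,p,s^0*\langle w\rangle\rangle<B_{i+1}$. This last application is also missing from your sketch, which asserts that $s^0,w\leq B_{i+1}$ ``follows from the construction'' without identifying where that bound comes from.
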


The central idea of the following proof is due to \cite[Lemma 5.24]{sommer95}, but we must adjust the details in a non-trivial way: The original statement of the result assumes an inequality $a\geq m+n+1$ where $m$ depends on (the code of) $\alpha$. We have to avoid such a bound, since in our application $\alpha$ will itself depend on the (non-standard) number $a$.

\begin{proof}
By \cite[Proposition 5.9]{sommer95} the sequence $A$ can be written as a concatenation $A=A^0*\dots *A^{c-1}$ such that each sequence $A^j$ is $\omega^\alpha$-large. Let $B_j$ be the first element of the sequence $A^j$, and set $B:=\langle B_0,\dots ,B_{c-1}\rangle$. It is easy to see that the pair $(B,d)$ is $n$-inductive and lies in the interval $[a,b]$, as $B$ is a subsequence of $A$. To verify that it is $(n,\alpha)$-inductive, consider \mbox{$j<\len(B)-1$}, a $\Pi_{n-1}$-formula $\varphi(y_0,\dots ,y_{k-1},s,x)$, a parameter list $p=\langle p_0,\dots ,p_{k-1}\rangle$, a sequence $s\in S_\alpha$ and a number $w$ with $\langle 1,\varphi,p,s*\langle w\rangle\,\rangle<B_j$, such that $\varphi^*(p_0,\dots ,p_{k-1},s,w;d_0,\dots ,d_{n-2})$ is true. We construct a sequence $s^0$ with $\len(s^0)=\len(A^j)$ as follows:
\begin{equation}\label{eq:construct-minimal-sequence}
 \parbox{11cm}{For any $i<\len(s^0)$, either $\langle s^0_0,\dots ,s^0_{i-1}\rangle$ is $\omega^\alpha$-large or else $s^0_i$ is minimal with the following property: There is an end-extension $s\in S_\alpha$ of $\langle s^0_0,\dots ,s^0_i\rangle$ and a number $w$ with $(1,\varphi,p,s*\langle w\rangle )<d_{n-1}$ such that $\varphi^*(p_0,\dots ,p_{k-1},s,w;d_0,\dots ,d_{n-2})$ is true.}
\end{equation}
Indeed, if $\langle s^0_0,\dots ,s^0_{i-1}\rangle$ is not $\omega^\alpha$-large then the induction hypothesis or the above assumptions (in case $i=0$) provide us with an end-extension $s$ of $\langle s^0_0,\dots ,s^0_{i-1}\rangle$ as described in (\ref{eq:construct-minimal-sequence}). Since $s$ is $\omega^\alpha$-large it is even an end-extension of $\langle s^0_0,\dots ,s^0_{i-1},x\rangle$ for some $x$. It suffices to minimize over $x$ to get $s^0_i$. Now that the construction is complete, let us establish the following property of $s^0$:
\begin{equation*}
 \parbox{11cm}{If $\langle s^0_0,\dots ,s^0_{i-1}\rangle$ is not $\omega^\alpha$-large then $\langle 1,\varphi,p,\langle s^0_0,\dots ,s^0_i\rangle\,\rangle<A^j_i-1$ holds.}
\end{equation*}
We argue by induction on $i$. The base case $i=0$ follows from the above assumption $\langle 1,\varphi,p,s*\langle w\rangle\rangle<B_j=A^j_0$ and the inequalities listed after Definition \ref{def:n-inductive}. In the step the induction hypothesis tells us $\langle 1,\varphi,p,\langle s^0_0,\dots ,s^0_i\rangle\,\rangle<A^j_i-1$. We can combine this with (\ref{eq:construct-minimal-sequence}) and the fact that $A$ is $\alpha$-admissible, to learn that $\langle s^0_0,\dots ,s^0_i\rangle$ has an end-extension $s\in S_\alpha$ such that the formula $\varphi^*(p_0,\dots ,p_{k-1},s,w;d_0,\dots ,d_{n-2})$ is true for some $w$ with $\langle 1,\varphi,p,s*\langle w\rangle\,\rangle<A^j_{i+1}$. Minimality of $s^0_{i+1}$ yields the required
\begin{equation*}
 \langle 1,\varphi,p,\langle s^0_0,\dots ,s^0_{i+1}\rangle\,\rangle\leq\langle 1,\varphi,p,s\rangle<\langle 1,\varphi,p,s*\langle w\rangle\,\rangle\leq A^j_{i+1}-1.
\end{equation*}
In particular we can infer $s^0_i\leq A^j_i$. As $A^j$ is $\omega^\alpha$-large it follows that $s^0$ is $\omega^\alpha$-large, by \cite[Proposition 5.9]{sommer95}. Possibly after shortening the sequence $s^0$ we may assume that it is exactly $\omega^\alpha$-large. When we apply (\ref{eq:construct-minimal-sequence}) with $i=\len(s^0)-1$ we obtain a sequence $s$ which is exactly $\omega^\alpha$-large and has $s^0$ as an initial segment. This forces $s^0=s$, so that $s^0$ has the properties attributed to $s$ in (\ref{eq:construct-minimal-sequence}), i.e.\ it satisfies condition (\ref{eq:condition-ordinal-induction-lexicographic}) from the definition of an $(n,\alpha)$-inductive sequence. We have already seen $\langle 1,\varphi,p,s^0\rangle<A^j_{\len(A^j)-1}-1$. Note that $A^{j+1}_0$ follows $A^j_{\len(A^j)-1}$ in the list $A$. By $\alpha$-admissibility we obtain a $w$ such that $\varphi^*(p_0,\dots ,p_{k-1},s^0,w;d_0,\dots ,d_{n-2})$ is true and such that we have
\begin{equation*}
 \langle 1,\varphi,p,s^0*\langle w\rangle\,\rangle<A^{j+1}_0=B_{j+1}.
\end{equation*}
This implies $s^0<B_{j+1}$ and $w<B_{j+1}$, as required by Definition \ref{def:alpha-inductive}. The $<_L$-minimality of $s^0$ follows easily from its construction.
\end{proof}

Putting pieces together we obtain a non-standard version of Sommer's result:

\begin{theorem}[$\aca$; cf.\ {\cite[Theorem 5.25]{sommer95}}]\label{thm:sommer-modified}
Consider $n\geq 1$, a model $\mathcal M\vDash\pa$, a non-standard number $c\in\mathcal M$, and an ordinal $\alpha\in\mathcal M$ (possibly non-standard). Assume that we have
\begin{equation*}
\mathcal M\vDash F_{\omega_{n-1}^{\omega^\alpha\cdot c}}(a)=b
\end{equation*}
for some elements $a\geq n+1$ and $b$ of $\mathcal M$. Then there is an initial segment $\mathcal I\subseteq\mathcal M$ equipped with a satisfaction relation for $\Pi_3(\Sigma_n)$-formulas. We have $a\in\mathcal I$ and $b\notin\mathcal I$, and $\mathcal I$ satisfies all axioms of $\isigma_n$. If $\alpha$ lies in $\mathcal I$ then we have $\mathcal I\vDash\ti_{\Pi_n}(\omega_2^\alpha)$.
\end{theorem}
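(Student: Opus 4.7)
The plan is to chain together the preparatory results already at hand. Inside the model $\mathcal{M}$, set $\gamma := \omega^\alpha\cdot c$ and observe that $\mathcal{M}$ sees $F_{\omega_{n-1}^\gamma}(a)=b$. Since Proposition \ref{prop:construct-n-inductive} is a $\pa$-theorem, I would apply it inside $\mathcal{M}$ (with this $\gamma$) to obtain an $n$-inductive $\alpha$-admissible pair $(A,d)\in\mathcal{M}$ in the interval $[a,b]$ such that $A$ is $\gamma$-large, i.e.\ $(\omega^\alpha\cdot c)$-large. Then Proposition \ref{prop:alpha-admissible-gives-alpha-inductive}, again applied inside $\mathcal{M}$, upgrades this to an $(n,\alpha)$-inductive pair $(B,d)\in\mathcal{M}$ in $[a,b]$ with $\len(B)=c$. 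The crucial external observation is that $c$ is non-standard, so that $B$ is a sequence of non-standard length in $\mathcal{M}$; this is what will allow us to form a genuine limit point below $b$.

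Next I would define
\begin{equation*}
\mathcal{I}:=\{m\in\mathcal{M}\,|\,\text{for some }j\in\mathbb{N}\text{ we have }\mathcal{M}\vDash m\leq B_j\}
\end{equation*}
as the smallest limit point of $B$. Because $c$ is non-standard there are enough standard indices $j$ to make $\mathcal{I}\cap B^{\mathcal{M}}$ unbounded in $\mathcal{I}$, and because $B_{\len(B)-1}\leq b$ this $\mathcal{I}$ is a proper initial segment not containing $b$; the inequality $a\leq B_0+n+1$ together with Lemma \ref{lem:location-limit-point} gives $a\in\mathcal{I}$. Since $(B,d)$ is in particular $n$-inductive, Proposition \ref{prop:sigma_n-satisfaction-I} endows $\mathcal{I}$ with a partial satisfaction relation for $\bigcup_{m\leq n}(\Sigma_m\cup\Pi_m)$ defined by the bounded-variant equivalence, and Lemma \ref{lem:extended-satisfaction-I} extends this to a satisfaction relation for $\Pi_3(\Sigma_n)$-formulas satisfying Tarski's conditions. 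Then Lemma \ref{lem:I-satisfies-isigman} shows $\mathcal{I}\vDash\isigma_n$.

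For the final clause, assume additionally that $\alpha\in\mathcal{I}$. Since $(B,d)$ is $(n,\alpha)$-inductive in $\mathcal{M}$, Proposition \ref{prop:I-satisfies-trasnfinite-induction} directly yields $\mathcal{I}\vDash\ti_{\Pi_n}(\omega_2^\alpha)$; note that $\ti_{\Pi_n}(\omega_2^\alpha)$ is $\Pi_3(\Sigma_n)$ after prefixing quantifiers, so our satisfaction relation covers it.

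The main obstacle I expect is not in any single step but in the passage from the $\pa$-provable propositions to the model-theoretic statement: one must be careful that $\alpha$ (and $c$) are allowed to be non-standard, which is exactly why Definition \ref{def:alpha-admissible} was introduced (disentangling $\alpha$ from the parameter list $p$) and why Proposition \ref{prop:alpha-admissible-gives-alpha-inductive} had to be formulated without the code-of-$\alpha$ bound present in Sommer's original argument. Once those preparatory results are in place and interpreted inside $\mathcal{M}$, the theorem follows by assembly, with the non-standardness of $c=\len(B)$ supplying the limit point.
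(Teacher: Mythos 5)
Your proof is correct and follows essentially the same route as the paper: apply Proposition \ref{prop:construct-n-inductive} and then Proposition \ref{prop:alpha-admissible-gives-alpha-inductive} inside $\mathcal M$ to get an $(n,\alpha)$-inductive pair $(B,d)$ of length $c$, take the limit point supplied by the non-standardness of $c$, and then read off the conclusions from Proposition \ref{prop:sigma_n-satisfaction-I}, Lemma \ref{lem:extended-satisfaction-I}, Lemma \ref{lem:location-limit-point}, Lemma \ref{lem:I-satisfies-isigman} and Proposition \ref{prop:I-satisfies-trasnfinite-induction}. Your closing remark about why $\alpha$-admissibility was needed to handle non-standard $\alpha$ is exactly the point the paper emphasises.
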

\begin{proof}
 Proposition \ref{prop:construct-n-inductive} yields a pair $(A,d)\in\mathcal M$ in $[a,b]$ which is $n$-inductive and $\alpha$-admissible, and such that $A$ is $(\omega^\alpha\cdot c)$-large (all from the viewpoint of $\mathcal M$). By Proposition \ref{prop:alpha-admissible-gives-alpha-inductive} this can be transformed into an $(n,\alpha)$-inductive pair $(B,d)\in\mathcal M$ in $[a,b]$, with $B$ a sequence of length $c$. Since $c$ is non-standard the sequence $B$ has a limit point $\mathcal I$. Proposition \ref{prop:sigma_n-satisfaction-I} and Lemma \ref{lem:extended-satisfaction-I} equip $\mathcal I$ with a satisfaction relation for $\Pi_3(\Sigma_n)$-formulas. By Lemma \ref{lem:location-limit-point} we have $a\in\mathcal I$ and~$b\notin\mathcal I$. Lemma \ref{lem:I-satisfies-isigman} yields $\mathcal I\vDash\isigma_n$, and Proposition \ref{prop:I-satisfies-trasnfinite-induction} accounts for transfinite induction.
\end{proof}

To apply this theorem we need a preparatory result:

\begin{lemma}[$\pa$]\label{lem:smaller-than-feps-for-slow-induction-model}
 For any $y<x$, if $\feps(x)$ is defined then $F_{\omega_y^{\omega_{x-y}\cdot (x+1)}}(x)$ is defined and has value at most $\feps(x)$.
\end{lemma}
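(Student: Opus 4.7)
The plan is to sandwich $\omega_y^{\omega_{x-y}\cdot(x+1)}$ below $\{\omega_{x+1}\}(x)=\omega_x^{x+1}$, so that the desired bound on $F$ follows from monotonicity of the fast-growing hierarchy in its ordinal subscript (provable in $\isigma_1$, cf.\ \cite[Section~5.2]{sommer95}).

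First I would compute $\{\omega_{x+1}\}(x)$. By induction on $k\geq 1$ one verifies the identity $\{\omega_k\}(x)=\omega_{k-1}^{x+1}$ in Sommer's notation: the base case $\{\omega\}(x)=x+1=\omega_0^{x+1}$ is the definition, and the inductive step applies $\{\omega^\lambda\}(x)=\omega^{\{\lambda\}(x)}$ (valid because $\omega_{k-1}$ is a limit for $k\geq 2$) to $\omega_k=\omega^{\omega_{k-1}}$. Instantiating at $k=x+1$ and using the defining clause $F_\lambda(z)=F_{\{\lambda\}(z)}(z)$ gives $\feps(x)=F_{\omega_x^{x+1}}(x)$; in particular $F_{\omega_x^{x+1}}(x)$ is defined whenever $\feps(x)$ is.

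Second, I would verify the ordinal inequality $\omega_y^{\omega_{x-y}\cdot(x+1)}\leq\omega_x^{x+1}$. The identity $\omega_{a+b}^\gamma=\omega_a^{\omega_b^\gamma}$ (direct induction on $a$) rewrites $\omega_x^{x+1}=\omega_y^{\omega_{x-y}^{x+1}}$, so since $\omega_y^{(\cdot)}$ is strictly monotone it suffices to show $\omega_{x-y}\cdot(x+1)\leq\omega_{x-y}^{x+1}$. Setting $k:=x-y\geq 1$ and using $\omega_k=\omega^{\omega_{k-1}}$, this reduces to $\omega_{k-1}+1\leq\omega_{k-1}^{x+1}$: for $k=1$ this reads $2\leq x+1$, which holds because $y<x$ forces $x\geq 1$, and for $k\geq 2$ one has $\omega_{k-1}^{x+1}\geq\omega_{k-1}^2>\omega_{k-1}$ by an easy induction.

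Combining these two steps with monotonicity of $F_\alpha(x)$ in $\alpha\leq\varepsilon_0$ (which entails both preservation of definedness from $\beta$ down to any $\alpha\leq\beta$ and the value bound $F_\alpha(x)\leq F_\beta(x)$) yields $F_{\omega_y^{\omega_{x-y}\cdot(x+1)}}(x)\leq F_{\omega_x^{x+1}}(x)=\feps(x)$, as required. The only real subtlety is having monotonicity of $F_\alpha(x)$ in $\alpha$ in exactly this uniform form, provably in $\isigma_1$; if the statement is not cited directly, one derives it by induction on $\beta$ using the fundamental-sequence structure of Sommer's system, or alternatively traces an explicit descent from $\omega_x^{x+1}$ via successive applications of $\{\cdot\}(x)$, each step preserving the $F$-value by the clause for limits, until reaching an ordinal that dominates $\omega_y^{\omega_{x-y}\cdot(x+1)}$.
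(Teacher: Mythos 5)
Your argument breaks at the final step: $F_\alpha(x)$ is \emph{not} monotone in $\alpha$ for a fixed argument $x$, so the inequality $\omega_y^{\omega_{x-y}\cdot(x+1)}\leq\omega_x^{x+1}$ does not by itself yield the value bound $F_{\omega_y^{\omega_{x-y}\cdot(x+1)}}(x)\leq F_{\omega_x^{x+1}}(x)$, nor the transfer of definedness. A concrete counterexample: $3<\omega$, but $F_3(1)=2047$ whereas $F_\omega(1)=F_{\{\omega\}(1)}(1)=F_2(1)=7$. The point is that descending to the $x$-th member of the fundamental sequence can land far \emph{below} the ordinal one naively expects, so ``smaller ordinal'' is strictly weaker than ``reachable by stepping down at $x$''.

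What is needed, and what the paper uses, is the Solovay step-down relation $\beta\searrow_x\alpha$ (arithmetized in \cite{rathjen13}), which asserts that $\alpha$ is reached from $\beta$ by finitely many descents along fundamental sequences at $x$. The relevant fact (\cite[Lemma 2.3]{rathjen13}) is that $\beta\searrow_x\alpha$ implies both that $F_\alpha(x)$ inherits definedness from $F_\beta(x)$ and that $F_\alpha(x)\leq F_\beta(x)$. The lemma is then proved by exhibiting a chain $\omega_{x+1}\searrow_x\omega_y^{\omega_{x-y}\cdot(x+1)}$, which requires real combinatorial work (the paper's use of \cite[Lemma~2.10, 2.13]{rathjen13}) and is genuinely stronger than the ordinal inequality you established. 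Your closing sentence gestures at ``tracing an explicit descent ... until reaching an ordinal that dominates the target'', but that is not enough either: dominating the target in the order $\leq$ does not let you conclude anything about $F$ at the target, by the same failure of monotonicity. You must descend to the target itself (or onward below it in the step-down order), which is what the cited lemmas from \cite{rathjen13} deliver. The ordinal identity $\omega_x^{x+1}=\omega_y^{\omega_{x-y}^{x+1}}$ and the computation $\{\omega_{x+1}\}(x)=\omega_x^{x+1}$ from your first two paragraphs are correct and can be kept, but they feed into the wrong comparison.
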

\begin{proof}
 Recall the ``step down"-relation $\beta\! \searrow_x\! \alpha$ of \cite{solovay81}, arithmetized in \cite{rathjen13}: It expresses that $\alpha$ can be reached from $\beta$ by descending to the $x$-th member of the fundamental sequence a finite number of times. It is well known that, provably in $\pa$, we have $\alpha\!\searrow_x\! 0$ for any ordinal $\alpha<\varepsilon_0$ (cf.\ \cite[Proposition~2.9]{solovay81}). Also recall $\feps(x)\simeq F_{\omega_{x+1}}(x)$. Thus \cite[Lemma 2.3]{rathjen13} reduces the claim to \mbox{$\omega_{x+1}\! \searrow_x\! \omega_y^{\omega_{x-y}\cdot(x+1)}$}. To establish the latter, note that $\omega_{x-y}\! \searrow_x\! \omega_{x-y-1}+1$ holds by \cite[Lemma 2.13]{rathjen13}. Then \cite[Lemma 2.10]{rathjen13} gives $\omega_{x-y+1}\! \searrow_x\! \omega^{\omega_{x-y-1}+1}$. In view of $\{\omega^{\omega_{x-y-1}+1}\}(x)=\omega_{x-y}\cdot (x+1)$ we conclude $\omega_{x-y+1}\! \searrow_x\! \omega_{x-y}\cdot (x+1)$. Iterating \cite[Lemma 2.10]{rathjen13} yields $\omega_{x+1}\! \searrow_x\! \omega_y^{\omega_{x-y}\cdot(x+1)}$, as desired.
\end{proof}

Finally, we can construct a model of slow transfinite induction. Our proof is inspired by that of \cite[Theorem 4.1]{rathjen13}, which uses the standard version of Sommer's result.

\begin{proposition}[$\pa$]\label{prop:consistency-slow-induction}
If Peano Arithmetic is consistent then so is the theory $\isigma_n+\ti_{\Pi_n}^\diamond$, for each $n\geq 1$.
\end{proposition}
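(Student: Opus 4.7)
The plan is to realise $\isigma_n+\ti_{\Pi_n}^\diamond$ as an initial segment of a model $\mathcal{M}\vDash\pa$ by applying Theorem \ref{thm:sommer-modified}. The parameters $a,\alpha,c,b\in\mathcal{M}$ have to be chosen so that the resulting segment $\mathcal{I}$ both carries enough transfinite induction and sees $\feps$ as sufficiently slow.

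First, working in $\aca$, $\con(\pa)$ gives a countable $\mathcal{M}\vDash\pa$. By Lemma \ref{lem:fragments-prove-feps-defined} the $\Sigma_1$-formula $\feps(x)\!\downarrow$ holds in $\mathcal{M}$ for every standard $x$, so $\Sigma_1$-overspill produces a non-standard $a\in\mathcal{M}$ with $\mathcal{M}\vDash\feps(a)\!\downarrow$. Set $\alpha:=\omega_{a-n}$ (non-standard) and $c:=a+1$, so that $\omega^\alpha\cdot c=\omega_{a-n+1}\cdot(a+1)$. Applying Lemma \ref{lem:smaller-than-feps-for-slow-induction-model} with $y=n-1$ and $x=a$, the value $b:=F_{\omega_{n-1}^{\omega^\alpha\cdot c}}(a)$ is defined in $\mathcal{M}$ and satisfies $b\leq\feps(a)$.

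Now Theorem \ref{thm:sommer-modified} yields an initial segment $\mathcal{I}\subseteq\mathcal{M}$ with $a\in\mathcal{I}$, $b\notin\mathcal{I}$, $\mathcal{I}\vDash\isigma_n$, and $\mathcal{I}\vDash\ti_{\Pi_n}(\omega_2^\alpha)$ provided $\alpha\in\mathcal{I}$. Since $\mathcal{I}\vDash\isigma_1$ and the function $k\mapsto\omega_k$ is $\isigma_1$-provably total with $\Delta_1$ graph, Lemma \ref{lem:I-satisfies-consequences} gives $\alpha=\omega_{a-n}\in\mathcal{I}$. Unfolding $\omega_2^\alpha=\omega^{\omega^{\omega_{a-n}}}=\omega_{a-n+2}$, we conclude $\mathcal{I}\vDash\ti_{\Pi_n}(\omega_{a-n+2})$.

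To verify $\mathcal{I}\vDash\ti_{\Pi_n}^\diamond$, take $x\in\mathcal{I}$ with $x\geq n-1$ and $\mathcal{I}\vDash\feps(x)\!\downarrow$. Absoluteness of the $\Delta_1$-in-$\isigma_1$ graph of $\feps$ (Lemma \ref{lem:I-satisfies-consequences}) forces $\feps(x)\in\mathcal{I}$. But $b\leq\feps(a)$ together with $b\notin\mathcal{I}$ and the downward closure of $\mathcal{I}$ gives $\feps(a)\notin\mathcal{I}$; monotonicity of $\feps$ in $\mathcal{M}$ then forces $x<a$, so $\omega_{x+3-n}\leq\omega_{a-n+2}=\omega_2^\alpha$. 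Since $\isigma_n$ proves the monotonicity of $\ti_{\Pi_n}$ in its ordinal argument, Lemma \ref{lem:I-satisfies-consequences} transfers this to $\mathcal{I}$ and yields $\mathcal{I}\vDash\ti_{\Pi_n}(\omega_{x+3-n})$.

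The genuine obstacle is that the ordinal $\alpha=\omega_{a-n}$ to which we wish to climb is itself non-standard; this is exactly what the strengthening of Sommer's result in Theorem \ref{thm:sommer-modified} was tailored for, and without it the standard argument of \cite[Theorem 4.1]{rathjen13} would only deliver transfinite induction up to some standard ordinal. Everything else is bookkeeping, matching the bound $F_{\omega_{n-1}^{\omega^\alpha\cdot c}}(a)\leq\feps(a)$ from Lemma \ref{lem:smaller-than-feps-for-slow-induction-model} with the hypothesis shape required by Theorem \ref{thm:sommer-modified}.
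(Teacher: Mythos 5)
Your proposal is correct and follows essentially the same route as the paper's own proof: overspill to get $\mathcal M\vDash\feps(a)\!\downarrow$ for non-standard $a$, Lemma \ref{lem:smaller-than-feps-for-slow-induction-model} to fit the hypothesis of Theorem \ref{thm:sommer-modified} with $\alpha=\omega_{a-n}$ and $c=a+1$, absoluteness to put $\alpha$ into the cut $\mathcal I$, and then the observation that any $x\in\mathcal I$ with $\mathcal I\vDash\feps(x)\!\downarrow$ must satisfy $x<a$, so that $\mathcal I\vDash\ti_{\Pi_n}(\omega_{a-n+2})$ covers all instances of $\ti_{\Pi_n}^\diamond$. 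The only cosmetic difference is that you take $b:=F_{\omega_{n-1}^{\omega^\alpha\cdot c}}(a)$ directly, whereas the paper takes $b:=\feps(a)$ and uses the lemma to find a smaller witness (and where you invoke ``monotonicity of $\feps$'' the paper cites the corresponding \cite[Lemma 2.3]{rathjen13}).
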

\begin{proof}
 Since the claim is arithmetic we can work in $\aca$. The assumption that Peano Arithmetic is consistent provides us with a non-standard model $\mathcal M\vDash\pa$. By Lemma~\ref{lem:fragments-prove-feps-defined} we have $\mathcal M\vDash\feps(k)\!\downarrow$ for each standard number~$k$. It is worth noting that this step requires $\mathcal M$ to be a model of full Peano Arithmetic. Using overspill we get
 \begin{equation*}
 \mathcal M\vDash\feps(a)=b\qquad\text{for some non-standard numbers $a,b\in\mathcal M$}.
 \end{equation*}
Lemma \ref{lem:smaller-than-feps-for-slow-induction-model} tells us that $F_{\omega_{n-1}^{\omega_{a-n+1}\cdot (a+1)}}(a)$ is defined in $\mathcal M$, and that its value is at most $b$. Now apply Theorem \ref{thm:sommer-modified} with $\alpha=\omega_{a-n}$ (as computed in $\mathcal M$). This produces an initial segment $\mathcal I\subseteq\mathcal M$, equipped with a satisfaction relation for $\Pi_3(\Sigma_n)$-formulas, such that we have $a\in\mathcal I$, $b\notin\mathcal I$ and $\mathcal I\vDash\isigma_n$. Recall that $x\mapsto\omega_x$ (in terms of codes) is an $\isigma_1$-provably total function with $\Delta_0$-graph. Thus we may compute $\omega_{a-n}$ from the viewpoint of $\mathcal I$. By absoluteness the computations in $\mathcal I$ and $\mathcal M$ yield the same ordinal $\omega_{a-n}=\alpha$. Theorem \ref{thm:sommer-modified} thus also gives $\mathcal I\vDash\ti_{\Pi_n}(\omega_{a-n+2})$. Recall
\begin{equation*}
 \ti_{\Pi_n}^\diamond\equiv\forall_{x\geq n\dotminus 1}(\feps(x)\!\downarrow\,\rightarrow\ti_{\Pi_n}(\omega_{x+3-n})).
\end{equation*}
To verify $\mathcal I\vDash\ti_{\Pi_n}^\diamond$ we consider an arbitrary $p\in\mathcal I$ with $p\geq n-1$. Assume that we have $\mathcal I\vDash\feps(p)=q$ for some $q\in\mathcal I$, and observe that this implies $q<b$. Since $\feps(x)=y$ is a bounded formula we also have $\mathcal M\vDash\feps(p)=q$. Using \cite[Lemma 2.3]{rathjen13} we can conclude $p<a$. Thus $\mathcal I\vDash\ti_{\Pi_n}(\omega_{a-n+2})$ implies $\mathcal I\vDash\ti_{\Pi_n}(\omega_{p+3-n})$, as required for $\mathcal I\vDash\ti_{\Pi_n}^\diamond$. Even though $\mathcal I$ does not have a full satisfaction relation, the desired consistency result follows by Lemma \ref{lem:I-satisfies-consequences}.
\end{proof}

The promised result about slow uniform reflection follows immediately:

\begin{theorem}\label{thm:bound-consistency-slow-reflection}
 We have
\begin{equation*}
 \pa\vdash\con(\pa)\rightarrow\con(\pa+\drfn).
\end{equation*}
\end{theorem}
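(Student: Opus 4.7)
The theorem is a direct synthesis of the two main results established in this section. My plan is to reason inside $\pa$ under the assumption $\con(\pa)$ and feed Proposition \ref{prop:consistency-slow-induction} into Corollary \ref{cor:consistency-slow-induction-suffices}. Proposition \ref{prop:consistency-slow-induction} yields $\con(\pa)\rightarrow\con(\isigma_n+\ti_{\Pi_n}^\diamond)$ for every $n\geq 1$; crucially, its proof is uniform in $n$, since the underlying construction in Theorem \ref{thm:sommer-modified} treats $n$ as a first-order parameter, as does Lemma \ref{lem:smaller-than-feps-for-slow-induction-model}. So in $\pa$ we actually have the stronger statement $\con(\pa)\rightarrow\forall n\geq 1\,\con(\isigma_n+\ti_{\Pi_n}^\diamond)$. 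In particular, under $\con(\pa)$, the theory $\isigma_n+\ti_{\Pi_n}^\diamond$ is consistent for arbitrarily large $n$.

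This is precisely the hypothesis of Corollary \ref{cor:consistency-slow-induction-suffices}, which, provably in $\isigma_1\subseteq\pa$, delivers $\con(\pa+\drfn)$. The desired conclusion $\pa\vdash\con(\pa)\rightarrow\con(\pa+\drfn)$ follows by composing the two $\pa$-proofs.

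There is essentially no obstacle left at this stage: the technical heart of the argument, namely the construction of the initial segment $\mathcal I$ equipped with a satisfaction relation for $\Pi_3(\Sigma_n)$-formulas and satisfying enough transfinite induction, has been absorbed into Proposition \ref{prop:consistency-slow-induction}; and the reduction of slow uniform reflection to slow transfinite induction has been absorbed into Corollary \ref{cor:consistency-slow-induction-suffices} via Lemma \ref{lem:slow-reflection-only-for-pin} and Proposition \ref{prop:slow-induction-implies-sloow-reflection}. The only point worth double-checking while writing the proof is that the uniformity in $n$ of Proposition \ref{prop:consistency-slow-induction} is genuinely available, but this is immediate from inspection of its proof, so the write-up should be just a two-line citation chain.
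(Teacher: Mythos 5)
Your proof is correct and coincides with the paper's own argument, which is exactly the one-line combination of Corollary \ref{cor:consistency-slow-induction-suffices} and Proposition \ref{prop:consistency-slow-induction}. The point you flag about uniformity in $n$ is the right thing to check, and it does hold: Proposition \ref{prop:consistency-slow-induction} is proved in $\aca$ with $n$ as a first-order parameter (in particular the satisfaction relation for $\mathcal I$ has complexity independent of $n$), so the internally quantified hypothesis of Corollary \ref{cor:consistency-slow-induction-suffices} is available under $\con(\pa)$.
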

\begin{proof}
 Combine Corollary \ref{cor:consistency-slow-induction-suffices} and Proposition \ref{prop:consistency-slow-induction}. 
\end{proof}

\section{Iterated Slow Consistency}\label{sect:iterated-consistency}

Recall the slow consistency formula
\begin{equation*}
\dcon(\pa+\varphi)\equiv\forall_x(\feps(x)\!\downarrow\,\rightarrow\con(\isigma_{x+1}+\varphi))
\end{equation*}
with variable $\varphi$. Following \cite[Lemma 2.1]{beklemishev03}, transfinite iterations along the ordinals can be defined with the help of the fixed point theorem (see e.g.\ \cite[Theorem III.2.1]{hajek91}): It provides a formula $\dcon_\alpha(\pa)$ with variable $\alpha$ such that we have
\begin{equation}\label{eq:def-iterated-consistency}
 \isigma_1\vdash\dcon_\alpha(\pa)\leftrightarrow\forall_{\beta<\alpha}\dcon(\pa+\dcon_{\dot\beta}(\pa)).
\end{equation}
In particular the equivalence implies that $\dcon_\alpha(\pa)$ is $\Pi_1$ in $\isigma_1$. Even though $\varepsilon_0$ is not part of the ordinal notation system, the equivalence makes the statement $\dcon_{\varepsilon_0}(\pa)$ meaningful. To show that iterations of slow consistency become stronger as the ordinal parameter grows we need the following observation:

\begin{lemma}[$\Sigma_1$-completeness for slow provability]\label{lem:slow-sigma1-completeness}
 If $\varphi\equiv\varphi(x)$ is $\Sigma_1$ in $\isigma_n$, with $n\geq 1$, then we have
\begin{equation*}
 \isigma_n\vdash\varphi(x)\rightarrow\pr^\diamond_\pa(\varphi(\dot x)).
\end{equation*}
\end{lemma}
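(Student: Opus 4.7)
The plan is to reduce to standard formalized $\Sigma_1$-completeness for ordinary provability, and then convert ordinary provability in a low fragment into slow $\pa$-provability by choosing the constant witness $y := n - 1$ in the characterization (\ref{eq:slow-provability-fragments}).

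By hypothesis there is a genuine $\Sigma_1$-formula $\varphi'(x)$ with $\isigma_n\vdash\varphi(x)\leftrightarrow\varphi'(x)$. The standard formalized $\Sigma_1$-completeness theorem yields $\isigma_1\vdash\varphi'(x)\rightarrow\pr_{\isigma_1}(\varphi'(\dot x))$, which strengthens inside $\isigma_1$ to $\varphi'(x)\rightarrow\pr_{\isigma_n}(\varphi'(\dot x))$ since every $\isigma_1$-proof is an $\isigma_n$-proof. The meta-theoretic fact $\isigma_n\vdash\forall_x(\varphi'(x)\rightarrow\varphi(x))$ is itself a true $\Sigma_1$-sentence, so $\Sigma_1$-completeness gives $\isigma_1\vdash\pr_{\isigma_n}(\varphi'(\dot x)\rightarrow\varphi(\dot x))$ uniformly in $x$. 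Combining with provable distribution of $\pr_{\isigma_n}$ over implication, one obtains
$$\isigma_n\vdash\varphi(x)\rightarrow\pr_{\isigma_n}(\varphi(\dot x)).$$

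Next, Lemma \ref{lem:fragments-prove-feps-defined} supplies $\isigma_n\vdash\feps(n-1)\!\downarrow$ (the degenerate case $n=1$ is secured by plain $\Sigma_1$-completeness on the true closed statement $\feps(0)\!\downarrow$). Taking $y=n-1$ in (\ref{eq:slow-provability-fragments}), so that $\isigma_{y+1}$ is precisely $\isigma_n$, yields
$$\isigma_n\vdash\pr_{\isigma_n}(\varphi(\dot x))\land\feps(n-1)\!\downarrow\,\rightarrow\,\pr_\pa^\diamond(\varphi(\dot x)),$$
and composing with the implication of the previous paragraph delivers the lemma.

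The only subtlety is purely notational: the $\isigma_n$-provable equivalence $\varphi\leftrightarrow\varphi'$ must be pushed under numeral substitution uniformly in the free variable $x$, which amounts to formalizing universal instantiation of the provability predicate inside $\isigma_1$. Once that routine step is in place the rest is a direct chaining of implications.
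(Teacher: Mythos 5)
Your proof is correct and follows essentially the same route as the paper: establish $\isigma_n\vdash\varphi(x)\rightarrow\pr_{\isigma_n}(\varphi(\dot x))$ by formalized $\Sigma_1$-completeness, then combine $\isigma_n\vdash\feps(n-1)\!\downarrow$ (Lemma \ref{lem:fragments-prove-feps-defined}) with the characterization (\ref{eq:slow-provability-fragments}) to pass to $\pr_\pa^\diamond$. The paper compresses the first step into a single invocation of $\Sigma_1$-completeness for the usual provability predicate; your extra detail on transporting across the $\isigma_n$-provable equivalence $\varphi\leftrightarrow\varphi'$ is the routine justification of that same step.
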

\begin{proof}
 Write $n=k+1$. By assumption and $\Sigma_1$-completeness for the usual notion of proof we have
\begin{equation*}
 \isigma_n\vdash\varphi(x)\rightarrow\pr_{\isigma_{k+1}}(\varphi(\dot x))
\end{equation*}
In view of $\isigma_n\vdash\feps(k)\!\downarrow$ the result follows by (\ref{eq:slow-provability-fragments}).
\end{proof}

We can deduce that the hierarchy of slow consistency statements is strict:

\begin{proposition}\label{prop:strict-hierarchy-slow-consistency}
 We have
\begin{equation*}
 \isigma_1\vdash\forall_{\beta<\alpha}(\dcon_\alpha(\pa)\rightarrow\dcon_\beta(\pa)).
\end{equation*}
Given ordinals $\beta<\alpha\leq\varepsilon_0$ we have
\begin{equation*}
 \pa+\dcon_\beta(\pa)\nvdash\dcon_\alpha(\pa).
\end{equation*}
\end{proposition}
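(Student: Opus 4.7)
The plan is to handle the two claims separately. The first is a direct unfolding of the fixed-point equation (\ref{eq:def-iterated-consistency}); the second is a standard G\"odel argument exploiting the fact that slow consistency collapses to ordinary consistency over a sufficiently strong fragment.

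For the monotonicity claim I work inside $\isigma_1$, fix $\beta < \alpha$, and assume $\dcon_\alpha(\pa)$. By (\ref{eq:def-iterated-consistency}) this unfolds to $\forall_{\gamma<\alpha}\dcon(\pa+\dcon_{\dot\gamma}(\pa))$. Any $\gamma < \beta$ also satisfies $\gamma < \alpha$ by transitivity, so $\dcon(\pa+\dcon_{\dot\gamma}(\pa))$ holds for every such $\gamma$. Refolding through (\ref{eq:def-iterated-consistency}) in the other direction then gives $\dcon_\beta(\pa)$.

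For the strictness claim I argue by contradiction. Suppose $\pa + \dcon_\beta(\pa) \vdash \dcon_\alpha(\pa)$. By syntactic compactness pick a standard $k$ with $\isigma_{k+1} + \dcon_\beta(\pa) \vdash \dcon_\alpha(\pa)$. Since $\beta < \alpha$, the instance $\gamma := \beta$ of (\ref{eq:def-iterated-consistency}) gives $\isigma_1 \vdash \dcon_\alpha(\pa) \rightarrow \dcon(\pa+\dcon_{\dot\beta}(\pa))$, and unfolding the definition of $\dcon$ yields
\begin{equation*}
\isigma_{k+1} + \dcon_\beta(\pa) \vdash \forall_x(\feps(x)\!\downarrow\,\rightarrow \con(\isigma_{x+1} + \dcon_\beta(\pa))).
\end{equation*}
Lemma \ref{lem:fragments-prove-feps-defined} provides $\isigma_{k+1} \vdash \feps(k)\!\downarrow$, so specialising $x := k$ collapses the displayed formula to
\begin{equation*}
\isigma_{k+1} + \dcon_\beta(\pa) \vdash \con(\isigma_{k+1} + \dcon_\beta(\pa)),
\end{equation*}
which contradicts G\"odel's second incompleteness theorem, provided the theory on the left is consistent (the implicit standing assumption whenever a strict hierarchy of this form is asserted).

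The only genuine subtlety, rather than an obstacle, is bookkeeping: the $+1$ index shift in the definition of $\dcon$ (inherited from $\pr^\diamond_\pa$) is precisely what lines up the fragment $\isigma_{x+1}$ appearing inside $\dcon$ with the fragment $\isigma_{k+1}$ that provably satisfies $\feps(k)\!\downarrow$, so that the G\"odelian collapse goes through cleanly at the same index $k$ chosen by compactness.
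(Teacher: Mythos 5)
Your proof of the monotonicity claim is correct and is in fact a bit more direct than the paper's: you simply restrict the universal quantifier $\forall_{\gamma<\alpha}$ to $\forall_{\gamma<\beta}$ and refold, whereas the paper instantiates at $\gamma=\beta$ and then invokes the contrapositive of slow $\Sigma_1$-completeness (Lemma~\ref{lem:slow-sigma1-completeness}) to get from $\dcon(\pa+\dcon_{\dot\beta}(\pa))$ back to $\dcon_\beta(\pa)$. Both arguments work; the paper's detour pays for itself later, because it isolates the implication as equation~(\ref{eq:contrapositive-sigma1-completeness}) and reuses it in Proposition~\ref{prop:iterations-succ-limit}. For the strictness claim your mechanism is also a legitimate alternative to the paper's: you in effect inline the proof of \cite[Corollary~3.4]{rathjen13} (that $\pa+\varphi\vdash\dcon(\pa+\varphi)$ forces inconsistency) using compactness, Lemma~\ref{lem:fragments-prove-feps-defined}, and G\"odel's second theorem, whereas the paper cites that corollary as a black box.

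However, there is a genuine gap in the strictness argument, precisely at the point you flag parenthetically and then wave away. You need $\pa+\dcon_\beta(\pa)$ (hence $\isigma_{k+1}+\dcon_\beta(\pa)$) to be consistent in order to invoke G\"odel's second incompleteness theorem, and this is \emph{not} an ``implicit standing assumption''---it is part of what must be proved. If $\pa+\dcon_\beta(\pa)$ were inconsistent it would trivially prove $\dcon_\alpha(\pa)$, so the proposition's content includes the assertion that $\pa+\dcon_\beta(\pa)$ is consistent for every $\beta<\varepsilon_0$. The paper explicitly supplies this: either by soundness, proving $\dcon_\beta(\pa)$ in the standard model by transfinite induction on $\beta$ using~(\ref{eq:def-iterated-consistency}), or (to keep the meta-theory weak) by invoking Theorem~\ref{thm:lower-bound-con-in-slow-com}, which shows $\pa+\con(\pa)\vdash\con(\pa+\dcon_{\dot\beta}(\pa))$ and does not depend on the present proposition. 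Your argument needs one of these two closing steps; without it the contradiction does not materialize.
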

\begin{proof}
 Equivalence (\ref{eq:def-iterated-consistency}) reduces the first claim to
\begin{equation}\label{eq:contrapositive-sigma1-completeness}
 \isigma_1\vdash\dcon(\pa+\dcon_{\dot\beta}(\pa))\rightarrow\dcon_\beta(\pa).
\end{equation}
This is nothing but the contrapositive of slow $\Sigma_1$-completeness. Let us come to the second claim: Aiming at a contradiction, assume that $\dcon_\alpha(\pa)$ is provable in $\pa+\dcon_\beta(\pa)$. By equivalence (\ref{eq:def-iterated-consistency}) this implies
\begin{equation*}
 \pa+\dcon_\beta(\pa)\vdash\dcon(\pa+\dcon_\beta(\pa)).
\end{equation*}
According to \cite[Corollary 3.4]{rathjen13} this is only possible if $\pa+\dcon_\beta(\pa)$ is inconsistent. The consistency of $\pa+\dcon_\beta(\pa)$ is easily established in a strong meta-theory: By induction on $\beta$ one shows that $\dcon_\beta(\pa)$ holds in the standard model, using (\ref{eq:def-iterated-consistency}) for the induction step. To avoid a strong meta-theory we could invoke Theorem~\ref{thm:lower-bound-con-in-slow-com} below (which does not rely on the present claim): It shows that $\pa+\con(\pa)$ proves the consistency of $\pa+\dcon_\beta(\pa)$.
\end{proof}

The following result shows in particular that the finite part of the hierarchy coincides with the iterations considered in \cite[Section 4]{rathjen13}.

\begin{proposition}\label{prop:iterations-succ-limit}
 We have
\begin{align*}
 \isigma_1&\vdash\dcon_0(\pa),\\
 \isigma_1&\vdash\dcon_{\alpha+1}(\pa)\leftrightarrow\dcon(\pa+\dcon_{\dot\alpha}(\pa)),\\
 \isigma_1&\vdash\text{``$\lambda$ limit''}\rightarrow(\dcon_\lambda(\pa)\leftrightarrow\forall_{\gamma<\lambda}\dcon_\gamma(\pa)).
\end{align*}
\end{proposition}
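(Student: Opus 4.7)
The plan is to apply the fixed-point defining equivalence~(\ref{eq:def-iterated-consistency}) in each of the three cases and verify a purely combinatorial identity inside $\isigma_1$. For the first clause, instantiating~(\ref{eq:def-iterated-consistency}) at $\alpha=0$ yields $\dcon_0(\pa)\leftrightarrow\forall_{\beta<0}\dcon(\pa+\dcon_{\dot\beta}(\pa))$, whose right-hand side is vacuously true.

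For the successor clause, (\ref{eq:def-iterated-consistency}) reduces the claim to
\begin{equation*}
\isigma_1\vdash\forall_{\beta<\alpha+1}\dcon(\pa+\dcon_{\dot\beta}(\pa))\,\leftrightarrow\,\dcon(\pa+\dcon_{\dot\alpha}(\pa)).
\end{equation*}
The left-to-right direction is immediate by instantiating $\beta=\alpha$. For the converse, assume $\dcon(\pa+\dcon_{\dot\alpha}(\pa))$ and let $\beta<\alpha+1$; if $\beta=\alpha$ there is nothing to show, and if $\beta<\alpha$ then the first part of Proposition~\ref{prop:strict-hierarchy-slow-consistency} (which is established independently of the present proposition via slow $\Sigma_1$-completeness, so no circularity arises) supplies $\isigma_1\vdash\dcon_{\dot\alpha}(\pa)\rightarrow\dcon_{\dot\beta}(\pa)$. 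Since every fragment $\isigma_{x+1}$ contains $\isigma_1$, this implication is provable in each such fragment; hence any derivation of contradiction from $\isigma_{x+1}+\dcon_{\dot\beta}(\pa)$ transfers to $\isigma_{x+1}+\dcon_{\dot\alpha}(\pa)$, which by the very definition of slow consistency gives $\dcon(\pa+\dcon_{\dot\alpha}(\pa))\rightarrow\dcon(\pa+\dcon_{\dot\beta}(\pa))$.

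For the limit clause, applying~(\ref{eq:def-iterated-consistency}) once to $\dcon_\lambda(\pa)$ and once to each $\dcon_\gamma(\pa)$ with $\gamma<\lambda$ reduces the equivalence to the ordinal tautology
\begin{equation*}
\forall_{\beta<\lambda}\dcon(\pa+\dcon_{\dot\beta}(\pa))\,\leftrightarrow\,\forall_{\gamma<\lambda}\forall_{\beta<\gamma}\dcon(\pa+\dcon_{\dot\beta}(\pa)).
\end{equation*}
The $\rightarrow$ direction is trivial. For $\leftarrow$, given any $\beta<\lambda$, the hypothesis that $\lambda$ is a limit ordinal provides some $\gamma$ with $\beta<\gamma<\lambda$ (concretely $\gamma:=\beta+1$), and the assumption applied at this $\gamma$ and $\beta$ yields $\dcon(\pa+\dcon_{\dot\beta}(\pa))$. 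The whole argument is a routine unfolding of the fixed-point equivalence; the only mildly delicate point is the monotonicity step in the successor case, which is already handled by Proposition~\ref{prop:strict-hierarchy-slow-consistency}.
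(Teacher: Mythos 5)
Your proof is correct, and the base case and limit case follow the paper's route (unfold the fixed-point equivalence~(\ref{eq:def-iterated-consistency}); for limits, use $\beta+1<\lambda$). The successor case, however, takes a genuinely different path that is worth comparing.

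The paper proves the non-trivial direction by a single application of the contrapositive of slow $\Sigma_1$-completeness, equation~(\ref{eq:contrapositive-sigma1-completeness}): from $\dcon(\pa+\dcon_{\dot\alpha}(\pa))$ one obtains $\dcon_\alpha(\pa)$ outright, which then unfolds to $\forall_{\beta<\alpha}\dcon(\pa+\dcon_{\dot\beta}(\pa))$ by~(\ref{eq:def-iterated-consistency}). You instead derive $\dcon(\pa+\dcon_{\dot\beta}(\pa))$ for each $\beta<\alpha$ \emph{directly}, by transporting consistency downward along the provable implication $\dcon_\alpha(\pa)\rightarrow\dcon_\beta(\pa)$ inside each fragment $\isigma_{x+1}$. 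Both are sound, and both ultimately rest on eq.~(\ref{eq:contrapositive-sigma1-completeness}) (your invocation of Proposition~\ref{prop:strict-hierarchy-slow-consistency} routes through it). The paper's version is shorter because it uses eq.~(\ref{eq:contrapositive-sigma1-completeness}) as a black box and never reasons \emph{inside} the consistency predicate.

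One step in your version is glossed over and deserves flagging: you state that $\isigma_1\vdash\dcon_{\dot\alpha}(\pa)\rightarrow\dcon_{\dot\beta}(\pa)$ and that this ``transfers to $\isigma_{x+1}$''. But this whole argument is happening \emph{inside} $\isigma_1$ with $\alpha,\beta,x$ as free variables, so what you actually need is
\begin{equation*}
\isigma_1\vdash\beta<\alpha\rightarrow\pr_{\isigma_{x+1}}\bigl(\ulcorner\dcon_{\dot\alpha}(\pa)\rightarrow\dcon_{\dot\beta}(\pa)\urcorner\bigr),
\end{equation*}
a formalized provability claim, not just the meta-level theorem $\isigma_1\vdash\forall_{\beta<\alpha}(\dcon_\alpha(\pa)\rightarrow\dcon_\beta(\pa))$. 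Bridging these requires provable $\Sigma_1$-completeness together with formalized instantiation and monotonicity of provability across fragments. All of this is standard and does go through, so it is an omission rather than an error, but it is precisely the kind of extra overhead the paper's approach avoids by appealing to eq.~(\ref{eq:contrapositive-sigma1-completeness}) directly.
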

\begin{proof}
 The first claim follows immediately from (\ref{eq:def-iterated-consistency}), and so does direction ``$\rightarrow$'' of the second claim. For the other direction, assume $\dcon(\pa+\dcon_{\dot\alpha}(\pa))$. By (\ref{eq:contrapositive-sigma1-completeness}) we get $\dcon_\alpha(\pa)$, i.e.\ $\forall_{\beta<\alpha}\dcon(\pa+\dcon_{\dot\beta}(\pa))$. Together we have $\forall_{\beta<\alpha+1}\dcon(\pa+\dcon_{\dot\beta}(\pa))$, and thus $\dcon_{\alpha+1}(\pa)$ by (\ref{eq:def-iterated-consistency}). The last claim follows from similar considerations.
\end{proof}

We remark that all theories $\pa+\dcon_\alpha(\pa)$ in our hierarchy are finite extensions of Peano Arithmetic. In this respect our set-up differs from the iterations of (usual) consistency investigated by Schmerl \cite{schmerl79} and Beklemishev \cite{beklemishev03}: Their approach would suggest to consider the infinite extension
\begin{equation*}
 \pa+\{\dcon_\gamma(\pa)\,|\,\gamma<\lambda\}
\end{equation*}
at limit stage $\lambda$. We want to avoid infinite extensions because they make the notion of slow consistency somewhat less canonic: Assuming $\feps(x)\!\downarrow$, should we demand the consistency of $\isigma_{x+1}+\{\dcon_\gamma(\pa)\,|\,\gamma<\lambda\}$ or the consistency of, say, $\isigma_{x+1}+\{\dcon_\gamma(\pa)\,|\,\gamma<\{\lambda\}(x+1)\}$? Another reason is that we hope to reach the finite extension $\pa+\con(\pa)$ at limit stage $\varepsilon_0$.\\
We have seen that iterations of slow consistency generate a strict hierarchy. Now we prove a main result of this paper, relating this hierarchy to the usual consistency statement:

\begin{theorem}\label{thm:lower-bound-con-in-slow-com}
 We have
\begin{equation*}
 \pa+\con(\pa)\vdash\forall_{\alpha<\varepsilon_0}\con(\pa+\dcon_{\dot\alpha}(\pa)).
\end{equation*}
By $\Sigma_1$-completeness and Proposition \ref{prop:iterations-succ-limit} this implies
\begin{equation*}
 \pa+\con(\pa)\vdash\dcon_{\varepsilon_0}(\pa).
\end{equation*}
\end{theorem}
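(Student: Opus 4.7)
I plan to deduce the theorem from Theorem \ref{thm:bound-consistency-slow-reflection} by establishing
\begin{equation*}
\pa\vdash\forall_{\alpha<\varepsilon_0}\pr_{\pa+\drfn}(\dcon_{\dot\alpha}(\pa)).
\end{equation*}
Once this is secured the theorem is immediate: inside $\pa+\con(\pa)$, where $\con(\pa+\drfn)$ is available via Theorem \ref{thm:bound-consistency-slow-reflection}, an inconsistency of $\pa+\dcon_{\dot\alpha}(\pa)$ for some $\alpha$ would yield $\pa\vdash\neg\dcon_\alpha(\pa)$, hence $\pr_{\pa+\drfn}(\neg\dcon_{\dot\alpha}(\pa))$, and combining with the displayed statement contradicts $\con(\pa+\drfn)$.

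The first ingredient is that $\dcon_\gamma(\pa)$ is $\pa+\drfn$-provably progressive in $\gamma$. The instance $\drfn(\neg\dcon_{\dot\beta}(\pa))$ of slow reflection reads $\forall_\beta(\pr_\pa^\diamond(\neg\dcon_{\dot\beta}(\pa))\rightarrow\neg\dcon_\beta(\pa))$, whose contrapositive is $\forall_\beta(\dcon_\beta(\pa)\rightarrow\dcon(\pa+\dcon_{\dot\beta}(\pa)))$; Proposition \ref{prop:iterations-succ-limit} identifies the right-hand side with $\dcon_{\beta+1}(\pa)$, settling the successor step. The limit case is immediate from the limit clause of Proposition \ref{prop:iterations-succ-limit}, and the base case $\alpha=0$ is trivial. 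One thus obtains a single $\pa+\drfn$-proof, independent of $\alpha$, of $\forall_\alpha(\forall_{\gamma<\alpha}\dcon_\gamma(\pa)\rightarrow\dcon_\alpha(\pa))$.

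The second ingredient is the classical uniform Gentzen-Schmerl theorem: there is a primitive recursive map sending each $\alpha<\varepsilon_0$ to a $\pa$-proof of $\ti_{\Pi_1}(\alpha+1)$, and $\isigma_1$ verifies that this map always returns a valid proof, in line with the proof-theoretic formalism of Sommer \cite{sommer95} that is already employed throughout the paper. Instantiating $\ti_{\Pi_1}(\alpha+1)$ at the $\Pi_1$-formula $\dcon_\gamma(\pa)$ and combining it primitive recursively with the fixed progressivity proof yields a $\pa+\drfn$-proof of $\forall_{\gamma<\alpha+1}\dcon_\gamma(\pa)$, and hence of $\dcon_\alpha(\pa)$. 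Formalizing this $\isigma_1$-provably total construction inside $\pa$ delivers the displayed statement above, and the second assertion of the theorem follows from the first by $\Sigma_1$-completeness and Proposition \ref{prop:iterations-succ-limit}.

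The main obstacle is really the uniformity in $\alpha$: each individual instance $\pa+\drfn\vdash\dcon_\alpha(\pa)$ is a routine external transfinite-induction argument, but converting this schematic information into a single $\pa$-provable universal statement forces one to exhibit the entire pipeline as an $\isigma_1$-verifiable primitive recursion in $\alpha$. Fortunately this is precisely the kind of uniform formalism that was set up for the earlier results of the paper, so no genuinely new technology is required.
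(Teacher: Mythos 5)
Your proposal is correct and follows essentially the same route as the paper: reduce to $\pa\vdash\forall_{\alpha<\varepsilon_0}\pr_{\pa+\drfn}(\dcon_{\dot\alpha}(\pa))$ via Theorem~\ref{thm:bound-consistency-slow-reflection}, obtain progressivity of $\dcon_\gamma(\pa)$ in $\pa+\drfn$ from the contrapositive of the slow reflection instance $\drfn(\neg\dcon_{\dot\beta}(\pa))$ together with (\ref{eq:def-iterated-consistency}), and close by the $\pa$-internalized version of Gentzen's transfinite induction below $\varepsilon_0$. The only cosmetic difference is that you spell out the successor/limit/base bookkeeping via Proposition~\ref{prop:iterations-succ-limit}, while the paper derives progressivity in one step directly from the fixed-point equivalence (\ref{eq:def-iterated-consistency}).
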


It is worth noting that the proof uses slow reflection only for $\Sigma_1$-formulas.

\begin{proof}
 By Theorem \ref{thm:bound-consistency-slow-reflection} we have $\pa+\con(\pa)\vdash\con(\pa+\drfn)$, so it suffices to establish
\begin{equation*}
 \pa\vdash\forall_{\alpha<\varepsilon_0}\pr_{\pa+\drfn}(\dcon_{\dot\alpha}(\pa)).
\end{equation*}
As famously shown by Gentzen \cite{gentzen43}, $\pa$ proves ordinal induction for any proper initial segment of $\varepsilon_0$. This fact can itself be established in $\pa$ (cf.\ Lemma \ref{lem:lifting-ordinal-induction-gentzen}). The open claim is thus reduced to
\begin{equation*}
 \pa\vdash\pr_{\pa+\drfn}(\prog_{\alpha.\dcon_{\alpha}(\pa)}),
\end{equation*}
i.e.\ the theory $\pa+\drfn$ must prove that $\dcon_{\alpha}(\pa)$ is progressive in the ordinal parameter $\alpha$. Considering the contrapositive of $\drfn(\neg\dcon_{\dot\beta}(\pa))$ we have
\begin{equation*}
 \pa+\drfn\vdash\forall_\gamma(\dcon_\gamma(\pa)\rightarrow\dcon(\pa+\dcon_{\dot\gamma}(\pa))).
\end{equation*}
Together with (\ref{eq:def-iterated-consistency}) we get
\begin{equation*}
 \pa+\drfn\vdash\forall_\beta(\forall_{\gamma<\beta}\dcon_\gamma(\pa)\rightarrow\dcon_\beta(\pa)),
\end{equation*}
just as required.
\end{proof}

Our next goal is to prove the converse implication, namely that $\varepsilon_0$ iterations of slow consistency yield the usual consistency statement. This follows from a result of Schmerl \cite{schmerl79} and Beklemishev \cite{beklemishev03}, stating that $\con(\pa)$ is implied by $\varepsilon_0$ iterations of consistency over the elementary arithmetic $\ea$. To conclude one only needs to observe that slow consistency entails the consistency of $\ea$, and that this is preserved under iterations. Since $\feps(0)\!\downarrow$ is provable by $\Sigma_1$-completeness equivalence (\ref{eq:def-slow-consistency}) shows that slow consistency even implies $\con(\isigma_1)$. The same argument works for the variant $\con^*$ (because $\feps(1)\!\downarrow$ is provable as well, cf.\ the introduction) and other possible variations of slow consistency. Let us provide details: We have already mentioned that \cite{beklemishev03} works with infinite collections of consistency statements, taking
\begin{equation*}
 \ea_\alpha:=\ea+\{\con(\ea_\beta)\,|\,\beta<\alpha\}
\end{equation*}
for the theory at stage $\alpha$. Still, the fixed point theorem allows us to express the relation $\ea_\alpha\vdash\varphi$: Following \cite[Equation 3]{beklemishev03} we consider a formula $\Box(\alpha,\varphi)$ with
\begin{equation}\label{eq:def-iterations-beklemishev}
 \ea\vdash\Box(\alpha,\varphi)\leftrightarrow\pr_{\ea+\{\neg\Box(\dot\beta,0=1)\,|\,\beta<\alpha\}}(\varphi).
\end{equation}
By \cite[Proposition 7.3, Remark 7.4]{beklemishev03} we have
\begin{equation*}
 \isigma_1\vdash\text{``$\varphi$ a $\Pi_1$-formula''}\land\pr_\pa(\varphi)\rightarrow\Box(\varepsilon_0,\varphi).
\end{equation*}
Taking the contrapositive of the instance $\varphi\equiv (0=1)$ yields
\begin{equation}\label{eq:epsilon-beklemishev-implies-consistency}
 \isigma_1\vdash\neg\Box(\varepsilon_0,0=1)\rightarrow\con(\pa).
\end{equation}
Let us establish the connection with slow consistency:

\begin{lemma}\label{lem:slow-consistency-stronger-consistency-beklemishev}
 We have
\begin{equation*}
\isigma_1\vdash\forall_{\alpha<\varepsilon_0}(\dcon_{\alpha+1}(\pa)\rightarrow\neg\Box(\alpha,0=1)).
\end{equation*}
\end{lemma}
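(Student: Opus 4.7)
The plan is to proceed by induction on $\alpha$ inside $\isigma_1$, first unfolding both sides via their fixed-point definitions. By Proposition \ref{prop:iterations-succ-limit}, $\dcon_{\alpha+1}(\pa)$ is equivalent to $\dcon(\pa+\dcon_{\dot\alpha}(\pa))$; instantiating the $\dcon$-clause at $x=0$ and using the $\isigma_1$-provable $\Sigma_1$-sentence $\feps(0)\!\downarrow$ yields $\con(\isigma_1+\dcon_\alpha(\pa))$, and hence $\con(\ea+\dcon_\alpha(\pa))$. On the Beklemishev side, (\ref{eq:def-iterations-beklemishev}) identifies $\neg\Box(\alpha,0=1)$ with $\con(\ea+\{\neg\Box(\dot\beta,0=1):\beta<\alpha\})$.

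The task thereby reduces to showing that $\ea+\dcon_\alpha(\pa)$ dominates the Beklemishev-style theory $\ea+\{\neg\Box(\dot\beta,0=1):\beta<\alpha\}$ by proving each of its non-$\ea$ axioms uniformly in $\beta<\alpha$. Concretely, one aims at
\begin{equation*}
\isigma_1+\dcon_\alpha(\pa)\vdash\forall_\beta(\beta<\alpha\rightarrow\neg\Box(\beta,0=1)).
\end{equation*}
Granted this, any hypothetical $\ea$-proof of $\bigvee_i\Box(\dot\beta_i,0=1)$ (which would witness inconsistency of the Beklemishev theory) would lift to an inconsistency of $\ea+\dcon_\alpha(\pa)$, contradicting the consistency obtained in the previous paragraph. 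The proof of the internal statement draws on the induction hypothesis: for each $\beta<\alpha$ one has an $\isigma_1$-proof of $\dcon_{\beta+1}(\pa)\rightarrow\neg\Box(\beta,0=1)$, and combining this with Proposition \ref{prop:strict-hierarchy-slow-consistency} (to pass from $\dcon_\alpha$ to $\dcon_{\beta+1}$) and slow $\Sigma_1$-completeness (Lemma \ref{lem:slow-sigma1-completeness}) internalises the IH as a theorem of $\isigma_1+\dcon_\alpha(\pa)$.

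The main obstacle is implementing this transfinite induction in the weak base theory $\isigma_1$: the natural induction formula is a Boolean combination of $\Sigma_1$ and $\Pi_1$, and $\isigma_1$ does not afford transfinite induction of such complexity up to $\varepsilon_0$. The way around this is to recast the inductive step as the existence of a provability witness — a $\Sigma_1$-statement — by exploiting the recursive clauses of both fixed points $\dcon_\alpha(\pa)$ and $\Box(\alpha,0=1)$ to trade the ordinal induction for a finitary recursion on a putative proof, in the spirit of the Schmerl and Beklemishev analyses of iterated consistency cited in the surrounding text. Slow $\Sigma_1$-completeness is the critical tool that makes this trade possible.
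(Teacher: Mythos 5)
Your reduction step is correct and matches the paper: from $\dcon_{\alpha+1}(\pa)$ and $\isigma_1\vdash\feps(0)\!\downarrow$ one obtains $\con(\isigma_1+\dcon_{\dot\alpha}(\pa))$, and by~(\ref{eq:def-iterations-beklemishev}) the target $\neg\Box(\alpha,0=1)$ is the consistency of $\ea+\{\neg\Box(\dot\beta,0=1)\,|\,\beta<\alpha\}$. You also correctly identify the central obstacle: the needed universal statement in $\alpha$ cannot be obtained by transfinite induction, which is unavailable up to $\varepsilon_0$ even in $\pa$, let alone $\isigma_1$.

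The gap is that you never supply the device that actually overcomes this obstacle. The paper uses Schmerl's \emph{reflexive induction rule}: for $\psi(\alpha):\equiv\con(\isigma_1+\dcon_{\dot\alpha}(\pa))\rightarrow\neg\Box(\alpha,0=1)$, one proves $\isigma_1\vdash\pr_{\isigma_1}(\forall_{\beta<\dot\alpha}\psi(\beta))\rightarrow\psi(\alpha)$ and then concludes $\isigma_1\vdash\forall_\alpha\psi(\alpha)$ --- an admissible rule whose justification rests on L\"ob's theorem, not on any recursion over proofs. Inside the argument, the reflexive hypothesis $\pr_{\isigma_1}(\forall_{\beta<\dot\alpha}\psi(\beta))$ is exactly what lets one strengthen the assumption $\con(\isigma_1+\forall_{\beta<\dot\alpha}\con(\isigma_1+\dcon_{\dot\beta}(\pa)))$ to $\con(\isigma_1+\forall_{\beta<\dot\alpha}\neg\Box(\beta,0=1))$, after which (\ref{eq:def-iterations-beklemishev}) finishes. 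Your phrase ``trade the ordinal induction for a finitary recursion on a putative proof'' does not describe this mechanism, and your assertion that slow $\Sigma_1$-completeness (Lemma~\ref{lem:slow-sigma1-completeness}) is ``the critical tool'' is incorrect: that lemma is not used here at all --- the only appeal to $\Sigma_1$-completeness is the ordinary one, to get $\feps(0)\!\downarrow$. Your intermediate goal $\isigma_1+\dcon_\alpha(\pa)\vdash\forall_\beta(\beta<\alpha\rightarrow\neg\Box(\beta,0=1))$ is also not what the paper establishes (and asking for it as an actual theorem, rather than for the corresponding provability assertion, would re-import the transfinite induction you were trying to avoid). Without the reflexive induction rule spelled out, the key step of the proof remains missing.
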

\begin{proof}
Let us abbreviate
\begin{equation*}
 \psi(\alpha):\equiv\con(\isigma_1+\dcon_{\dot\alpha}(\pa))\rightarrow\neg\Box(\alpha,0=1).
\end{equation*}
In view of (\ref{eq:def-iterated-consistency}), (\ref{eq:def-slow-consistency}) and $\isigma_1\vdash\feps(0)\!\downarrow$ we have
\begin{equation*}
\isigma_1\vdash\forall_\alpha(\dcon_{\alpha+1}(\pa)\rightarrow\con(\isigma_1+\dcon_{\dot\alpha}(\pa))).
\end{equation*}
Thus it suffices to prove $\isigma_1\vdash\forall_{\alpha<\varepsilon_0}\psi(\alpha)$. We would like to argue by transfinite induction, but this does not yield a uniform proof for all ordinals below $\varepsilon_0$ (not even if we work in $\pa$ instead of $\isigma_1$). Instead we use a trick based on L\"ob's theorem, namely the ``reflexive induction rule'' introduced by Schmerl in \cite{schmerl79} (see also the proof of \cite[Lemma 3.2]{beklemishev03}). This admissible rule allows us to conclude $\isigma_1\vdash\forall_\alpha\psi(\alpha)$ once we have established
\begin{equation}\label{eq:for-reflexive-induction-rule-slow-implies-ea}
 \isigma_1\vdash\pr_{\isigma_1}(\forall_{\beta<\dot\alpha}\psi(\beta))\rightarrow\psi(\alpha).
\end{equation}
To prove the latter, let us work in $\isigma_1$: We assume $\pr_{\isigma_1}(\forall_{\beta<\dot\alpha}\psi(\beta))$ and, unravelling the definition of $\psi$, also $\con(\isigma_1+\dcon_{\dot\alpha}(\pa))$. By (\ref{eq:def-iterated-consistency}) this gives
\begin{equation*}
 \con(\isigma_1+\forall_{\beta<\dot\alpha}\dcon(\pa+\dcon_{\dot\beta}(\pa))).
\end{equation*}
Note how the dot-notation operates on two different levels. By $\Sigma_1$-completeness $\feps(0)\!\downarrow$ is available. Thus (\ref{eq:def-slow-consistency}) yields
\begin{equation*}
 \con(\isigma_1+\forall_{\beta<\dot\alpha}\con(\isigma_1+\dcon_{\dot\beta}(\pa))).
\end{equation*}
Using the assumption $\pr_{\isigma_1}(\forall_{\beta<\dot\alpha}\psi(\beta))$, i.e.\ the reflexive induction hypothesis, we conclude
\begin{equation*}
 \con(\isigma_1+\forall_{\beta<\dot\alpha}\neg\Box(\beta,0=1)).
\end{equation*}
A fortiori we have
\begin{equation*}
 \con(\ea+\{\neg\Box(\dot\beta,0=1)\,|\,\beta<\alpha\}).
\end{equation*}
By (\ref{eq:def-iterations-beklemishev}) we get $\neg\Box(\alpha,0=1)$, i.e.\ the conclusion of $\psi(\alpha)$, as required for (\ref{eq:for-reflexive-induction-rule-slow-implies-ea}).
\end{proof}

Now we can deduce the converse bound to Theorem \ref{thm:lower-bound-con-in-slow-com}:

\begin{corollary}
 We have
\begin{equation*}
 \isigma_1+\dcon_{\varepsilon_0}(\pa)\vdash\con(\pa).
\end{equation*}
\end{corollary}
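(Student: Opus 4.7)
The plan is to chain Lemma \ref{lem:slow-consistency-stronger-consistency-beklemishev} together with equation (\ref{eq:epsilon-beklemishev-implies-consistency}), bridging them via (\ref{eq:def-iterated-consistency}) and a standard continuity argument for $\Box(\cdot,0=1)$ at the limit $\varepsilon_0$. All steps are carried out in $\isigma_1$ under the assumption $\dcon_{\varepsilon_0}(\pa)$.

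First I would unfold the hypothesis: by (\ref{eq:def-iterated-consistency}) (read at $\alpha=\varepsilon_0$), $\dcon_{\varepsilon_0}(\pa)$ is literally $\forall_{\beta<\varepsilon_0}\dcon(\pa+\dcon_{\dot\beta}(\pa))$, and combining this with the contrapositive of slow $\Sigma_1$-completeness (\ref{eq:contrapositive-sigma1-completeness}) gives $\forall_{\gamma<\varepsilon_0}\dcon_\gamma(\pa)$. Because $\varepsilon_0$ is a limit it is closed under successor, so for each $\alpha<\varepsilon_0$ we have $\alpha+1<\varepsilon_0$ and hence $\dcon_{\alpha+1}(\pa)$; applying Lemma \ref{lem:slow-consistency-stronger-consistency-beklemishev} pointwise yields
\begin{equation*}
\forall_{\alpha<\varepsilon_0}\neg\Box(\alpha,0=1).
\end{equation*}

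It remains to upgrade this to $\neg\Box(\varepsilon_0,0=1)$, after which (\ref{eq:epsilon-beklemishev-implies-consistency}) immediately delivers $\con(\pa)$. Unravelling the fixed-point definition (\ref{eq:def-iterations-beklemishev}), any witness to $\Box(\varepsilon_0,0=1)$ is a finite $\ea$-proof of $0=1$ using only finitely many side axioms $\neg\Box(\dot\beta_1,0=1),\dots,\neg\Box(\dot\beta_k,0=1)$ with each $\beta_i<\varepsilon_0$. Taking $\beta:=\max_i\beta_i+1<\varepsilon_0$ (using limit-ness once more), the very same finite proof already witnesses $\Box(\beta,0=1)$, contradicting the pointwise statement instantiated at $\alpha=\beta$.

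The main obstacle is the continuity step in the last paragraph, but it is essentially the compactness argument already built into the Schmerl--Beklemishev framework and is straightforward to formalize in $\isigma_1$ by direct inspection of proof codes; everything else is a verbatim invocation of previously established results.
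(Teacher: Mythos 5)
Your proof is correct and follows essentially the same route as the paper: unfold $\dcon_{\varepsilon_0}(\pa)$ to get $\forall_{\alpha<\varepsilon_0}\dcon_{\alpha+1}(\pa)$, apply Lemma \ref{lem:slow-consistency-stronger-consistency-beklemishev} pointwise to obtain $\forall_{\alpha<\varepsilon_0}\neg\Box(\alpha,0=1)$, and then upgrade to $\neg\Box(\varepsilon_0,0=1)$ by the syntactic compactness argument on (\ref{eq:def-iterations-beklemishev}), from which (\ref{eq:epsilon-beklemishev-implies-consistency}) gives $\con(\pa)$. The only cosmetic differences are that the paper reaches $\forall_{\alpha<\varepsilon_0}\dcon_{\alpha+1}(\pa)$ directly via Proposition \ref{prop:iterations-succ-limit} rather than by first extracting $\forall_\gamma\dcon_\gamma(\pa)$ from (\ref{eq:contrapositive-sigma1-completeness}), and it routes the compactness step through the equivalent consistency formulation $\con(\ea+\{\neg\Box(\dot\beta,0=1)\mid\beta<\alpha\})$ rather than inspecting proof codes explicitly as you do.
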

\begin{proof}
We argue in $\isigma_1$: By (\ref{eq:def-iterated-consistency}) and Proposition \ref{prop:iterations-succ-limit} the assumption $\dcon_{\varepsilon_0}(\pa)$ implies $\forall_{\alpha<\varepsilon_0}\dcon_{\alpha+1}(\pa)$. Lemma \ref{lem:slow-consistency-stronger-consistency-beklemishev} yields $\forall_{\alpha<\varepsilon_0}\neg\Box(\alpha,0=1)$. Using (\ref{eq:def-iterations-beklemishev}) we arrive at $\forall_{\alpha<\varepsilon_0}\con(\ea+\{\neg\Box(\dot\beta,0=1)\,|\,\beta<\alpha\})$, and (the syntactic version of) compactness leads to $\con(\ea+\{\neg\Box(\dot\beta,0=1)\,|\,\beta<\varepsilon_0\})$. The other direction of (\ref{eq:def-iterations-beklemishev}) gives $\neg\Box(\varepsilon_0,0=1)$. Finally $\con(\pa)$ follows from (\ref{eq:epsilon-beklemishev-implies-consistency}).
\end{proof}

We come back to the topic of index shifts in the definition of slow provability, as discussed in the introduction. Recall the $\dagger$-variant with consistency statement
\begin{equation}\label{eq:consistency-dagger-variant}
 \isigma_1\vdash\con^\dagger(\pa+\varphi)\leftrightarrow\forall_x(\feps(x)\!\downarrow\,\rightarrow\con(\isigma_{x+2}+\varphi))
\end{equation}
and uniform reflection principles
\begin{equation*}
 \isigma_1\vdash\rfn_\pa^\dagger(\Pi_n)\leftrightarrow\forall_x(\feps(x)\!\downarrow\,\rightarrow\rfn_{\isigma_{x+2}}(\Pi_n)).
\end{equation*}
As already observed in \cite{freund-proof-length} this index shift makes slow reflection as powerful as the usual reflection principle:

\begin{lemma}
 For all $n\geq 2$ we have
\begin{equation*}
 \isigma_1\vdash\rfn_\pa^\dagger(\Pi_n)\rightarrow\rfn_\pa(\Pi_n).
\end{equation*}
\end{lemma}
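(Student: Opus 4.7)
I plan to work in $\isigma_1$ under the assumption $\rfn_\pa^\dagger(\Pi_n)$ with $n\geq 2$, first establishing the totality of $\feps$ and then reading off $\rfn_\pa(\Pi_n)$ as an immediate consequence. Once $\forall x\,\feps(x)\!\downarrow$ is available, the hypothesis $\rfn_\pa^\dagger(\Pi_n)$ collapses to $\forall x\,\rfn_{\isigma_{x+2}}(\Pi_n)$: given any $\Pi_n$-sentence $\psi$ with $\pr_\pa(\psi)$, we pick a fragment $\isigma_k$ in which $\psi$ is provable and take $x$ with $x+2\geq k$, whereupon $\rfn_{\isigma_{x+2}}(\Pi_n)$ yields $\true_{\Pi_n}(\psi)$, which is exactly what $\rfn_\pa(\Pi_n)$ demands.

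The core of the argument is thus the totality of $\feps$ inside $\isigma_1+\rfn_\pa^\dagger(\Pi_n)$. I would prove this by $\Sigma_1$-induction on $x$ applied to the formula $\feps(x)\!\downarrow$. The base case $\feps(0)\!\downarrow$ is a true $\Sigma_1$-sentence, hence $\isigma_1$-provable. For the induction step assume $\feps(x)\!\downarrow$; by $\rfn_\pa^\dagger(\Pi_n)$ this gives $\rfn_{\isigma_{x+2}}(\Pi_n)$. A standard application of the ``It's snowing''-Lemma (using that every $\Sigma_1$-formula is $\isigma_1$-provably equivalent to some $\Pi_2$-formula, hence to a $\Pi_n$-formula for $n\geq 2$) converts this into $\rfn_{\isigma_{x+2}}(\Sigma_1)$, uniformly in $x$. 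Lemma~\ref{lem:fragments-prove-feps-defined} applied at $x+1$ gives $\isigma_{x+2}\vdash\feps(\dot{x+1})\!\downarrow$, so $\pr_{\isigma_{x+2}}(\feps(\dot{x+1})\!\downarrow)$ is $\isigma_1$-provable, uniformly in $x$. Feeding this $\Sigma_1$-sentence into $\rfn_{\isigma_{x+2}}(\Sigma_1)$ produces the desired $\feps(x+1)\!\downarrow$, closing the induction.

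The main obstacle I expect is ensuring that every ingredient of the induction step is fully uniform in the free variable $x$ and provable in the meta-theory $\isigma_1$. Crucially, one needs the uniform version $\isigma_1\vdash\forall y\,\pr_{\isigma_{y+1}}(\feps(\dot y)\!\downarrow)$ of Lemma~\ref{lem:fragments-prove-feps-defined}, together with a uniform derivation of the implication $\rfn_T(\Pi_n)\to\rfn_T(\Sigma_1)$ as the parameter $T=\isigma_{x+2}$ varies. It is worth noting that the argument genuinely exploits the $+2$ shift of the dagger-variant: with the diamond shift $+1$ one would only obtain $\rfn_{\isigma_{x+1}}(\Sigma_1)$, which is insufficient to reflect the $\Sigma_1$-sentence $\feps(\dot{x+1})\!\downarrow$, since Lemma~\ref{lem:fragments-prove-feps-defined} yields only $\isigma_{x+2}\vdash\feps(\dot{x+1})\!\downarrow$ and not $\isigma_{x+1}\vdash\feps(\dot{x+1})\!\downarrow$. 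This matches the paper's remark that the diamond- and dagger-variants of slow reflection behave very differently.
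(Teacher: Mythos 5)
Your proof is correct and follows essentially the same route as the paper's: prove by $\Sigma_1$-induction that $\rfn_\pa^\dagger(\Pi_n)$ entails $\forall x\,\feps(x)\!\downarrow$, using $\feps(x)\!\downarrow\rightarrow\rfn_{\isigma_{x+2}}(\Pi_n)$ together with the uniform fact $\isigma_{x+2}\vdash\feps(x+1)\!\downarrow$ from Lemma~\ref{lem:fragments-prove-feps-defined} to close the induction step, and then observe that the dagger-reflection principle collapses to ordinary reflection once $\feps$ is total. You merely spell out the steps the paper leaves implicit (padding $\Sigma_1$ into $\Pi_n$, the uniformity in $x$, and the necessity of the $+2$ shift).
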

\begin{proof}
 It suffices to show that $\rfn_\pa^\dagger(\Pi_n)$ implies the totality of $\feps$. Let us prove the induction step
\begin{equation*}
 \isigma_1+\rfn_\pa^\dagger(\Pi_n)\vdash\feps(x)\!\downarrow\,\rightarrow\feps(x+1)\!\downarrow.
\end{equation*}
Working in $\isigma_1$, the assumptions $\rfn_\pa^\dagger(\Pi_n)$ and $\feps(x)\!\downarrow$ make $\Pi_n$-reflection over the theory $\isigma_{x+2}$ available. On the other hand, Lemma \ref{lem:fragments-prove-feps-defined} tells us that $\isigma_{x+2}$ proves the $\Sigma_1$-formula $\feps(x+1)\!\downarrow$. 
\end{proof}

The situation is somewhat different for $n=1$: Finitely many iterations of slow consistency are still weaker than the usual consistency statement. Of course, iterations of $\con^\dagger$ are defined parallel to (\ref{eq:def-iterated-consistency}). We will cite results for the $\diamond$-variant if the proofs carry over easily.

\begin{proposition}\label{prop:con-stronger-omega-daggers}
 We have
\begin{equation*}
 \pa\vdash\con(\pa)\rightarrow\con_\omega^\dagger(\pa).
\end{equation*}
\end{proposition}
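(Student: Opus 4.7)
The plan is to parallel the proof of Theorem~\ref{thm:lower-bound-con-in-slow-com} with ordinary induction along $\omega$ in place of Gentzen-style transfinite induction below $\varepsilon_0$. The first ingredient, unchanged, is Theorem~\ref{thm:bound-consistency-slow-reflection}, which yields $\pa+\con(\pa)\vdash\con(\pa+\drfn)$. Since $\con_\omega^\dagger(\pa)$---equivalently, by the analog of Proposition~\ref{prop:iterations-succ-limit} for the $\dagger$-variant, $\forall_n\con_n^\dagger(\pa)$---is a $\Pi_1$-sentence, the standard principle
\[
 \isigma_1\vdash\con(T)\wedge\pr_T(\varphi)\to\varphi\qquad(\varphi\in\Pi_1,\ T\supseteq\isigma_1\text{ recursive}),
\]
proved by applying $\Sigma_1$-completeness to $\neg\varphi$, reduces the task to establishing $\pa+\drfn\vdash\forall_n\con_n^\dagger(\pa)$.

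Inside $\pa+\drfn$ I would prove $\forall_n\con_n^\dagger(\pa)$ by induction on $n$; the induction formula is $\Pi_1$ in $n$, so this is available already in $\isigma_1$. The base case $\con_0^\dagger(\pa)$ is trivial, and by the $\dagger$-analog of Proposition~\ref{prop:iterations-succ-limit} the successor step reduces to the $\dagger$-progressiveness statement
\[
 \forall_n\bigl(\con_n^\dagger(\pa)\to\con^\dagger(\pa+\con_n^\dagger(\pa))\bigr),
\]
whose contrapositive is $\rfn_\pa^\dagger$ applied to the $\Sigma_1$-formula $\neg\con_{\dot n}^\dagger(\pa)$ (with $n$ as parameter). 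The main obstacle is that $\rfn_\pa^\dagger(\Sigma_1)$ is, in general, strictly stronger than $\drfn$ (cf.\ the preceding Lemma). To bypass this, I would exploit that $\pa+\drfn(\Pi_2)$ already proves the totality of the slow variant $F_{\varepsilon_0}^\diamond$ of \cite{freund-proof-length}, from which the totality of $\feps$ itself can be extracted. Once $\feps$ is total in the ambient theory, any slow $\dagger$-witness for a formula $\varphi$---an $\isigma_{x+2}$-proof of $\varphi$ together with $\feps(x)\downarrow$---reinterprets as a slow $\diamond$-witness by taking index $y=x+1$, since $\pr_{\isigma_{y+1}}(\varphi)$ is then literally $\pr_{\isigma_{x+2}}(\varphi)$ and $\feps(y)\downarrow$ is available. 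Thus $\pr_\pa^\dagger(\varphi)\to\pr_\pa^\diamond(\varphi)$ holds uniformly throughout $\pa+\drfn$, and $\drfn$ applied to $\varphi\equiv\neg\con_{\dot n}^\dagger(\pa)$ delivers the required progressiveness instance.

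The remainder is routine bookkeeping: $\Sigma_1$-completeness internalises $\pa+\drfn\vdash\con_\omega^\dagger(\pa)$ as $\pa\vdash\pr_{\pa+\drfn}(\con_\omega^\dagger(\pa))$, and combining this with $\pa+\con(\pa)\vdash\con(\pa+\drfn)$ through the $\Pi_1$-soundness principle displayed above produces $\pa\vdash\con(\pa)\to\con_\omega^\dagger(\pa)$. The critical non-routine step is the passage from $\drfn$ to the needed instance of $\rfn_\pa^\dagger$, which rests entirely on the availability of totality of $\feps$ inside $\pa+\drfn$; if that lemma cannot be imported cleanly from \cite{freund-proof-length}, the fallback is to replicate the relevant part of its proof in situ.
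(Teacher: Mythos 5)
Your proposal hinges on the claim that the totality of $\feps$ ``can be extracted'' from the totality of the slow variant $F_{\varepsilon_0}^\diamond$ inside $\pa+\drfn(\Pi_2)$. That step is not merely unproven but false, and the gap is fatal. The function $F_{\varepsilon_0}^\diamond$ is a \emph{slow} variant of $\feps$: it is designed to be total in a theory where $\feps$ itself need not be, so its totality does not entail $\forall_x\feps(x)\!\downarrow$. More decisively, if $\pa+\drfn$ did prove $\forall_x\feps(x)\!\downarrow$, then by (\ref{eq:slow-provability-fragments}) slow provability $\pr_\pa^\diamond$ would collapse, provably in $\pa+\drfn$, to ordinary $\pa$-provability, so $\pa+\drfn$ would contain $\pa+\rfn_\pa$ (full uniform reflection). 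Combining this with Theorem~\ref{thm:bound-consistency-slow-reflection} would give $\pa+\con(\pa)\vdash\con(\pa+\rfn_\pa)$, and then $\pa+\rfn_\pa\vdash\con(\pa+\rfn_\pa)$, contradicting G\"odel's second incompleteness theorem. So $\pa+\drfn$ definitely does not prove $\feps$ total, and consequently the implication $\pr_\pa^\dagger(\varphi)\to\pr_\pa^\diamond(\varphi)$ is \emph{not} available in $\pa+\drfn$. The $\dagger$-variant is, by design, strictly stronger than the $\diamond$-variant (the lemma just before the proposition shows $\rfn_\pa^\dagger(\Pi_n)$ is outright equivalent to $\rfn_\pa(\Pi_n)$ for $n\geq 2$), and your argument attempts to derive $\dagger$-reflection consequences from $\diamond$-reflection, which cannot work.

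The paper instead argues model-theoretically: after reducing to $\forall_n\con(\pa+\con^\dagger_{\dot n}(\pa))$ by induction on $n$ inside $\pa+\con(\pa)$, the successor step is subsumed under the single uniform statement $\pa\vdash\forall_\varphi\bigl(\con(\pa+\varphi)\rightarrow\con(\pa+\con^\dagger(\pa+\dot\varphi))\bigr)$, which is established by taking a nonstandard model of $\pa+\varphi$, using reflexivity and overspill to find a nonstandard $a$ with $\con(\isigma_{a+2}+\varphi)$, and then either cutting to an initial segment (when $\feps(a+1)\!\downarrow$ holds) or observing that the model itself already satisfies $\con^\dagger(\pa+\varphi)$ (when it fails). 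Slow reflection plays no role here; the entire argument lives at the level of consistency statements, and it is this model construction, not a collapse of $\dagger$ into $\diamond$, that does the real work. If you want to salvage a reflection-based proof, you would need a lemma establishing progressiveness of $\con_n^\dagger(\pa)$ directly from $\drfn$, but there is no route to that via totality of $\feps$.
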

\begin{proof}
 According to Proposition \ref{prop:iterations-succ-limit} we can replace $\con_\omega^\dagger(\pa)$ by $\forall_n\con_n^\dagger(\pa)$. Invoking $\Sigma_1$-completeness the claim can then be strengthened to
\begin{equation*}
 \pa+\con(\pa)\vdash\forall_n\con(\pa+\con_{\dot n}^\dagger(\pa)).
\end{equation*}
This is established by induction on $n$. Concerning the base case, Proposition \ref{prop:iterations-succ-limit} gives $\pa\vdash\con_0^\dagger(\pa)$. Thus $\con(\pa+\con_0^\dagger(\pa))$ follows from the assumption $\con(\pa)$. Again using Proposition \ref{prop:iterations-succ-limit} the induction step amounts to
\begin{equation*}
 \pa\vdash\con(\pa+\con_{\dot n}^\dagger(\pa))\rightarrow\con(\pa+\con^\dagger(\pa+\con_{\dot n}^\dagger(\pa))).
\end{equation*}
Note that, by $\Sigma_1$-completeness, it does not matter whether the code of the formula $\con_{\dot n}^\dagger(\pa)$ in the conclusion is computed in the object theory (i.e.\ inside the consistency statement) or in the meta-theory.  Then the induction step follows from the more general statement
\begin{equation*}
 \pa\vdash\forall_\varphi(\con(\pa+\varphi)\rightarrow\con(\pa+\con^\dagger(\pa+\dot\varphi))).
\end{equation*}
This is essentially \cite[Theorem 4.1]{rathjen13}. Let us repeat the argument given there, to show that one can accommodate the index shift: Working in $\pa$, the assumption $\con(\pa+\varphi)$ provides a non-standard model $\mathcal M\vDash\pa+\varphi$. Since $\pa+\varphi$ is reflexive we have $\mathcal M\vDash\con(\isigma_{k+2}+\varphi)$ for each number $k$. By overspill we get
\begin{equation*}
 \mathcal M\vDash\con(\isigma_{a+2}+\varphi)\qquad\text{for some non-standard $a\in\mathcal M$}.
\end{equation*}
Assume first that we have $\mathcal M\vDash\feps(a+1)\!\downarrow$, i.e.\ that $\mathcal M\vDash\feps(a+1)=b$ holds for some $b\in\mathcal M$. Let $n$ be a standard number. By \cite[Corollary 3.8]{rathjen13} (derived from \cite[Theorem 2.25]{sommer95}, see also our Theorem \ref{thm:sommer-modified}) there is an initial segment $\mathcal I\subseteq\mathcal M$ with $a\in\mathcal I$, $b\notin\mathcal I$ and $\mathcal I\vDash\isigma_{n+1}$. We verify that $\mathcal I$ satisfies $\con^\dagger(\pa+\varphi)$ as characterized by (\ref{eq:consistency-dagger-variant}): Assume that we have $\mathcal I\vDash\feps(p)=q$ for some $p,q\in\mathcal I$. This implies $\mathcal M\vDash\feps(p)=q$, and then $q<b$ yields $p\leq a$. Thus we have $\mathcal M\vDash\con(\isigma_{p+2}+\varphi)$. Since this is a $\Pi_1$-formula we get $\mathcal I\vDash\con(\isigma_{p+2}+\varphi)$, as required for $\mathcal I\vDash\con^\dagger(\pa+\varphi)$. Since $n$ was arbitrary we have indeed shown $\con(\pa+\con^\dagger(\pa+\varphi))$. In the remaining case, assume $\mathcal M\nvDash\feps(a+1)\!\downarrow$. Then $\mathcal M$ itself satisfies $\con^\dagger(\pa+\varphi)$: Consider $p\in\mathcal M$ with $\mathcal M\vDash\feps(p)\!\downarrow$. In view of \cite[Lemma~2.3]{rathjen13} we must have $p\leq a$. Thus $\mathcal M\vDash\con(\isigma_{a+2}+\varphi)$ implies $\mathcal M\vDash\con(\isigma_{p+2}+\varphi)$, as required for $\mathcal M\vDash\con^\dagger(\pa+\varphi)$. As $\mathcal M$ is a model of~$\pa$ we have again established the consistency of $\pa+\con^\dagger(\pa+\varphi)$.
\end{proof}

Conversely, the index shift makes $\omega$ iterations of slow consistency as strong as the usual consistency statement:

\begin{proposition}
 We have
\begin{equation*}
 \isigma_1\vdash\con^\dagger_\omega(\pa)\rightarrow\con(\pa).
\end{equation*}
\end{proposition}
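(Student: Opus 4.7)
The plan is to mimic the strategy behind the corollary to Theorem~\ref{thm:lower-bound-con-in-slow-com} (which yields $\isigma_1+\dcon_{\varepsilon_0}(\pa)\vdash\con(\pa)$), exploiting the fact that $\con^\dagger(\pa+\varphi)$ furnishes $\con(\isigma_{x+2}+\varphi)$ rather than just $\con(\isigma_{x+1}+\varphi)$. The extra fragment index will let me descend through every $\con(\isigma_n)$ in only finitely many steps, so that $\omega$ iterations suffice. Since $\isigma_1\vdash\con(\pa)\leftrightarrow\forall_n\con(\isigma_n)$ (any $\pa$-proof of $\bot$ uses only finitely many instances of induction) and the $\dagger$-analogue of Proposition~\ref{prop:iterations-succ-limit} gives $\isigma_1\vdash\con^\dagger_\omega(\pa)\leftrightarrow\forall_n\con^\dagger_n(\pa)$, it suffices to derive $\con(\isigma_{n+1})$ from $\forall_m\con^\dagger_m(\pa)$ inside $\isigma_1$, uniformly in~$n$.

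First I would establish two preparatory $\isigma_1$-provable implications. The \emph{base step} $\con^\dagger_n(\pa)\rightarrow\con(\isigma_2+\con^\dagger_{n\dotminus 1}(\pa))$ for $n\geq 1$ is immediate from $\con^\dagger_n(\pa)\equiv\forall_x(\feps(x)\!\downarrow\rightarrow\con(\isigma_{x+2}+\con^\dagger_{n-1}(\pa)))$ together with $\feps(0)\!\downarrow$. The \emph{climb step}
\begin{equation*}
 \isigma_1\vdash\forall_k\forall_n\bigl[n\geq k+2\rightarrow\bigl(\con(\isigma_{k+2}+\con^\dagger_{n-k-1}(\pa))\rightarrow\con(\isigma_{k+3}+\con^\dagger_{n-k-2}(\pa))\bigr)\bigr]
\end{equation*}
is obtained as follows: the uniform version of Lemma~\ref{lem:fragments-prove-feps-defined} (namely $\isigma_1\vdash\forall_k\,\pr_{\isigma_{k+2}}(\feps(\dot{k+1})\!\downarrow)$) lets one instantiate $x=k+1$ inside the theory $\isigma_{k+2}+\con^\dagger_{n-k-1}(\pa)$ to derive the $\Pi_1$-sentence $\con(\isigma_{k+3}+\con^\dagger_{n-k-2}(\pa))$, and one then passes this result to the outside via the standard principle $\isigma_1\vdash\pr_T(\pi)\rightarrow(\con(T)\rightarrow\pi)$ for $\pi\in\Pi_1$ and $T\supseteq\isigma_1$ (which is formalised $\Sigma_1$-completeness applied to $\neg\pi$).

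With these in hand I would work inside $\isigma_1+\con^\dagger_\omega(\pa)$ with the formula
\begin{equation*}
 \Phi(k)\equiv\forall_n[n>k\rightarrow\con(\isigma_{k+2}+\con^\dagger_{n-k-1}(\pa))].
\end{equation*}
Its matrix is a single consistency statement, so $\Phi(k)$ is $\Pi_1$ and $\isigma_1$-induction applies. The base case $\Phi(0)$ follows from the base step together with the hypothesis $\forall_n\con^\dagger_n(\pa)$, and the induction step $\Phi(k)\rightarrow\Phi(k+1)$ is precisely the climb step. Hence $\forall_k\Phi(k)$; specialising $\Phi(n\dotminus 1)$ at the quantifier witness $n$ itself yields $\con(\isigma_{n+1}+\con^\dagger_0(\pa))\equiv\con(\isigma_{n+1})$ for every $n\geq 1$, and thus $\con(\pa)$. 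The main obstacle the argument is designed to side-step is the complexity of the naive target $\forall_n(\con^\dagger_n(\pa)\rightarrow\con(\isigma_{n+1}))$, which is $\Pi_2$ and out of reach of $\isigma_1$'s induction; working globally under $\con^\dagger_\omega(\pa)$ is what lets me delete the antecedents and bring the induction formula $\Phi(k)$ down to~$\Pi_1$.
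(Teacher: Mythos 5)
Your proposal is correct and takes essentially the same route as the paper's proof: reduce $\con^\dagger_\omega(\pa)$ to $\forall_n\con^\dagger_n(\pa)$ via Proposition~\ref{prop:iterations-succ-limit}, then climb the fragments $\isigma_{k+2}\rightsquigarrow\isigma_{k+3}$ one step at a time using Lemma~\ref{lem:fragments-prove-feps-defined} inside the relevant theory and formalised $\Sigma_1$-completeness to pull the $\Pi_1$ consistency statement back out, landing on $\con(\isigma_{n+1})$ for every $n$. The only cosmetic difference is in how the induction is packaged: the paper fixes $n$ as a free parameter and inducts on $m\leq n$ with the $\Pi_1$ formula $\con(\isigma_{m+1}+\con^\dagger_{\overline{n-m}}(\pa))$, whereas you universally quantify $n$ inside your induction formula $\Phi(k)$ and induct on $k$; both are legitimate $\Pi_1$-inductions available in $\isigma_1$.
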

\begin{proof}
By Proposition \ref{prop:iterations-succ-limit} the assumption $\con^\dagger_\omega(\pa)$ implies $\forall_{n<\omega}\con^\dagger_n(\pa)$. Thus it suffices to prove
\begin{equation*}
 \isigma_1\vdash\forall_n(\con_{n+1}^\dagger(\pa)\rightarrow\forall_{m\leq n}\con(\isigma_{m+1}+\con_{\overline{n-m}}^\dagger(\pa))).
\end{equation*}
Note that $\overline{n-m}$ refers to the dot notation. Working in $\isigma_1$, consider an arbitrary $n$ and assume $\con_{n+1}^\dagger(\pa)$. We show the conclusion by induction on $m$. For the base case $m=0$, note that the assumption $\con_{n+1}^\dagger(\pa)$ implies $\con^\dagger(\pa+\con_{\dot n}^\dagger(\pa))$. In view of (\ref{eq:consistency-dagger-variant}) and $\feps(0)\!\downarrow$ we get the desired $\con(\isigma_1+\con_{\dot n}^\dagger(\pa))$. In the step $m\leadsto m+1$ the induction hypothesis provides $\con(\isigma_{m+1}+\con_{\overline{n-m}}^\dagger(\pa))$. Invoking (\ref{eq:def-iterated-consistency}) this implies
\begin{equation*}
 \con(\isigma_{m+1}+\con^\dagger(\pa+\con_{\overline{n-(m+1)}}^\dagger(\pa))).
\end{equation*}
Crucially, Lemma \ref{lem:fragments-prove-feps-defined} provides $\isigma_{m+1}\vdash\feps(m)\!\downarrow$. In view of (\ref{eq:consistency-dagger-variant}) we can conclude
\begin{equation*}
 \con(\isigma_{m+1}+\con(\isigma_{\dot m+2}+\con_{\overline{n-(m+1)}}^\dagger(\pa))).
\end{equation*}
The desired $\con(\isigma_{m+2}+\con_{\overline{n-(m+1)}}^\dagger(\pa))$ follows by $\Sigma_1$-completeness.
\end{proof}

Finally, we consider parameter-free slow reflection. Notationally, we switch back to the $\diamond$-variant, but it is easy to see that the same arguments apply to the other versions of slow consistency. Parameter-free slow reflection for a closed formula $\varphi$ is defined as
\begin{equation*}
 \prfn^\diamond_\pa(\varphi):\equiv\pr_\pa^\diamond(\varphi)\rightarrow\varphi.
\end{equation*}
It will be useful to know that slow provability behaves, in important respects, like the usual provability predicate:

\begin{lemma}[Slow Hilbert-Bernays conditions; {\cite[Lemma~5.3]{rathjen-miscellanea-slow-consistency}}]\label{lem:slow-hilbert-bernays}
If $\varphi$ is provable in Peano Arithmetic then so is $\pr_\pa^\diamond(\varphi)$. Furthermore the following holds:
\begin{align*}
\isigma_1 & \vdash\forall_\varphi(\pr_{\pa}^\diamond(\varphi)\rightarrow\pr_{\pa}^\diamond(\pr_{\pa}^\diamond(\dot\varphi))),\\
\isigma_1 & \vdash\forall_{\varphi,\psi}(\pr_{\pa}^\diamond(\varphi\rightarrow\psi)\land\pr_\pa^\diamond(\varphi)\rightarrow\pr_\pa^\diamond(\psi)).
\end{align*}
\end{lemma}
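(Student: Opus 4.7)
The plan is to deduce each of the three claims from the basic characterization (\ref{eq:slow-provability-fragments}) of slow provability, combined with ordinary $\Sigma_1$-completeness and Lemma \ref{lem:fragments-prove-feps-defined}. For the first claim (necessitation) I would invoke compactness of first-order logic to extract a standard $n$ with $\isigma_{n+1}\vdash\varphi$, and fix a standard $\isigma_{n+1}$-proof $q$. Both $\pro_{\isigma_{n+1}}(\overline q,\overline\varphi)$ and $\feps(\overline n)\!\downarrow$ are then true closed $\Sigma_1$-statements, hence provable already in $\isigma_1$ by $\Sigma_1$-completeness, so pairing the two witnesses yields $\pa\vdash\pr_\pa^\diamond(\varphi)$.

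For the second claim I would exploit the fact that $\pr_\pa^\diamond(\varphi)$ is $\Sigma_1$ in $\isigma_1$ by (\ref{eq:slow-provability-fragments}). Formalized $\Sigma_1$-completeness then supplies
\[
\isigma_1\vdash\forall_\varphi\bigl(\pr_\pa^\diamond(\varphi)\rightarrow\pr_{\isigma_1}(\pr_\pa^\diamond(\dot\varphi))\bigr),
\]
and the instance $\isigma_1\vdash\feps(0)\!\downarrow$ of Lemma \ref{lem:fragments-prove-feps-defined} together with (\ref{eq:slow-provability-fragments}) lets me upgrade the inner $\pr_{\isigma_1}$ to $\pr_\pa^\diamond$, giving the desired conclusion. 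This is essentially the argument that will later appear in the proof of Lemma \ref{lem:slow-sigma1-completeness}, specialised to the $\Sigma_1$-formula $\pr_\pa^\diamond(\varphi)$.

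For the third claim I would work inside $\isigma_1$ and unfold the two assumptions via (\ref{eq:slow-provability-fragments}), obtaining numbers $x,y$ with $\feps(x)\!\downarrow$, $\pr_{\isigma_{x+1}}(\varphi\rightarrow\psi)$, $\feps(y)\!\downarrow$ and $\pr_{\isigma_{y+1}}(\varphi)$. Setting $z:=\max(x,y)$, the fragments $\isigma_{x+1}$ and $\isigma_{y+1}$ are both contained in $\isigma_{z+1}$, so $\isigma_1$ proves $\pr_{\isigma_{z+1}}(\varphi\rightarrow\psi)$ and $\pr_{\isigma_{z+1}}(\varphi)$; the usual internal modus ponens for $\pr_{\isigma_{z+1}}$ then yields $\pr_{\isigma_{z+1}}(\psi)$. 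Since $\feps(z)$ coincides with either $\feps(x)$ or $\feps(y)$ it is also defined, and a final application of (\ref{eq:slow-provability-fragments}) delivers $\pr_\pa^\diamond(\psi)$.

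The main bookkeeping concern throughout is that slow provability mixes the index $x$ of the fragment $\isigma_{x+1}$ with the numerical \emph{value} $\feps(x)$ recorded in the slow-proof pair; but this is handled uniformly by the equivalence (\ref{eq:slow-provability-fragments}), so no serious obstacle arises. The only step that is at all conceptual is the second, where one must recognise that slow $\Sigma_1$-completeness follows instantly from ordinary $\Sigma_1$-completeness once a single witness $\feps(0)\!\downarrow$ is available in the meta-theory.
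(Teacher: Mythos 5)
Your proposal is correct and follows essentially the same route as the paper: all three conditions are reduced to the characterization (\ref{eq:slow-provability-fragments}), with ordinary $\Sigma_1$-completeness (resp.\ formalized $\Sigma_1$-completeness) supplying the fragment-level provability fact and $\feps(0)\!\downarrow$ (resp.\ $\feps(\max(x,y))\!\downarrow$) supplying the required totality witness. The only cosmetic difference is in the modus-ponens case, where you take $z:=\max(x,y)$ and the paper instead assumes $x\geq y$ by symmetry; these are the same argument.
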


Additionally, Rathjen in \cite[Lemma~5.3]{rathjen-miscellanea-slow-consistency} proves a slow version of the formalized L\"ob theorem.

\begin{proof}
If Peano Arithmetic proves $\varphi$ then we have $\isigma_{k+1}\vdash\varphi$ for some $k$. By the usual provability condition we get $\pa\vdash\pr_{k+1}(\varphi)$. Together with $\pa\vdash\feps(k)\!\downarrow$ we can conclude $\pa\vdash\pr_\pa^\diamond(\varphi)$ by (\ref{eq:slow-provability-fragments}). Next, work in $\isigma_1$ and assume $\pr_{\pa}^\diamond(\varphi)$. By $\Sigma_1$-completeness we obtain $\pr_{\isigma_1}(\pr_{\pa}^\diamond(\dot\varphi))$, and in view of $\feps(0)\!\downarrow$ equivalence (\ref{eq:slow-provability-fragments}) yields $\pr_\pa^\diamond(\pr_{\pa}^\diamond(\dot\varphi))$. For the last claim we again work in $\isigma_1$. Assume $\pr_{\pa}^\diamond(\varphi\rightarrow\psi)$ and $\pr_{\pa}^\diamond(\varphi)$. By (\ref{eq:slow-provability-fragments}) we get numbers $x$ and $y$ with $\pr_{\isigma_{x+1}}(\varphi\rightarrow\psi)$ and $\pr_{\isigma_{y+1}}(\varphi)$, as well as $\feps(x)\!\downarrow$ and $\feps(y)\!\downarrow$. We assume $x\geq y$, the other case being symmetric. Then the usual provability condition provides $\pr_{\isigma_{x+1}}(\psi)$, and $\pr_\pa^\diamond(\psi)$ follows by (\ref{eq:slow-provability-fragments}).
\end{proof}

The following consequence is due to a hint by Michael Rathjen:

\begin{proposition}[Slow Goryachev's Theorem]\label{prop:slow-goryachev}
We have
\begin{equation*}
 \pa+\{\prfn_\pa^\diamond(\varphi)\,|\,\text{$\varphi$ a closed formula}\}\quad\equiv_{\Pi_1}\quad\pa+\{\dcon_n(\pa)\,|\,n<\omega\},
\end{equation*}
i.e.\ the two theories prove the same $\Pi_1$-sentences.
\end{proposition}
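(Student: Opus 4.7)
My plan is to prove the $\Pi_1$-equivalence by establishing its two containments separately. The containment from right to left is straightforward since each $\dcon_n(\pa)$ is a $\Pi_1$-sentence, and it suffices to show that every such $\dcon_n(\pa)$ is actually provable in $\pa+\{\prfn_\pa^\diamond(\varphi):\varphi\text{ closed}\}$. I would verify this by induction on $n$: the base case $\dcon_0(\pa)$ holds in $\isigma_1$ by the first clause of Proposition \ref{prop:iterations-succ-limit}, while for the inductive step the reflection axiom $\prfn_\pa^\diamond(\neg\dcon_n(\pa))$ gives, by contraposition, the implication $\dcon_n(\pa)\to\neg\pr_\pa^\diamond(\neg\dcon_n(\pa))$, and by the successor clause of Proposition \ref{prop:iterations-succ-limit} the right-hand side is exactly $\dcon_{n+1}(\pa)$.

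The hard direction is the Goryachev-style $\Pi_1$-conservativity. Given $\psi\in\Pi_1$ with $\pa+\{\prfn_\pa^\diamond(\varphi)\}\vdash\psi$, compactness furnishes closed $\theta_1,\dots,\theta_k$ with $\pa\vdash\bigwedge_{i=1}^k\prfn_\pa^\diamond(\theta_i)\to\psi$, and I aim to exhibit an $n$ with $\pa+\dcon_n(\pa)\vdash\psi$ by induction on $k$. The heart of the argument is the single-axiom case $k=1$, which is the slow analogue of the classical Goryachev reduction. The hypothesis $\pa\vdash\prfn_\pa^\diamond(\theta)\to\psi$ splits, via the implications $\theta\to\prfn_\pa^\diamond(\theta)$ and $\neg\pr_\pa^\diamond(\theta)\to\prfn_\pa^\diamond(\theta)$, into $\pa\vdash\theta\to\psi$ and $\pa\vdash\neg\psi\to\pr_\pa^\diamond(\theta)$. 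Applying Lemma \ref{lem:slow-hilbert-bernays} to the first yields $\pa\vdash\pr_\pa^\diamond(\theta)\to\pr_\pa^\diamond(\psi)$, and combining with the second produces $\pa\vdash\neg\psi\to\pr_\pa^\diamond(\psi)$. Because $\neg\psi$ is $\Sigma_1$, Lemma \ref{lem:slow-sigma1-completeness} adds $\pa\vdash\neg\psi\to\pr_\pa^\diamond(\neg\psi)$; one more application of Lemma \ref{lem:slow-hilbert-bernays} to the $\pa$-theorem $\psi\wedge\neg\psi\to\bot$ delivers $\pa\vdash\neg\psi\to\pr_\pa^\diamond(\bot)$, i.e.\ $\pa\vdash\dcon(\pa)\to\psi$, so $\pa+\dcon_1(\pa)\vdash\psi$.

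The main obstacle is the inductive step for $k\geq 2$. Naively peeling off a single reflection axiom leaves an intermediate implication $\prfn_\pa^\diamond(\theta_k)\to\psi$ of complexity $\Sigma_2$, to which the induction hypothesis cannot be applied directly. I would handle this by running the single-axiom manipulation above the theory $\pa+\dcon_{n-1}(\pa)$ rather than $\pa$ itself: from the contrapositive $\pa\vdash\neg\psi\to\bigvee_i(\pr_\pa^\diamond(\theta_i)\wedge\neg\theta_i)$, one combines, case by case, the individual $\pr_\pa^\diamond(\theta_i)$-conjuncts with slow $\Sigma_1$-completeness applied to $\neg\psi$ and with the formalised slow L\"ob theorem of \cite{rathjen-miscellanea-slow-consistency} to extract a slow-provable contradiction $\pr_\pa^\diamond(\bot)$, thereby obtaining $\neg\dcon_k(\pa)$ and hence $\pa+\dcon_k(\pa)\vdash\psi$. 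The delicate point is the bookkeeping required to turn the disjunctive case analysis into a uniform derivation in a single $\pa+\dcon_n(\pa)$ and to verify that the growth in $n$ can be bounded in terms of $k$ alone; it is here that the full strength of the slow Hilbert-Bernays conditions in Lemma \ref{lem:slow-hilbert-bernays}, together with the fact that the $\dcon_n(\pa)$ form a descending chain (Proposition \ref{prop:strict-hierarchy-slow-consistency}), becomes indispensable.
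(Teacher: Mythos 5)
Your right-to-left inclusion and the $k=1$ case of the hard direction are correct and match the standard Goryachev argument transplanted to slow provability via Lemma~\ref{lem:slow-hilbert-bernays} and Lemma~\ref{lem:slow-sigma1-completeness}. The problem is the $k\geq 2$ case, which you yourself flag as ``delicate'' and which your sketch does not actually carry through.

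The concrete bug is this: you propose to ``extract a slow-provable contradiction $\pr_\pa^\diamond(\bot)$'' from $\neg\psi$ and the disjunction $\bigvee_i(\pr_\pa^\diamond(\theta_i)\wedge\neg\theta_i)$. If that were achievable, you would obtain $\pa\vdash\neg\psi\to\pr_\pa^\diamond(\bot)$, i.e.\ $\pa\vdash\dcon(\pa)\to\psi$, for \emph{every} $\Pi_1$-sentence $\psi$ provable from finitely many slow reflection instances. But take $\theta_1=\neg\dcon_0(\pa)$, $\theta_2=\neg\dcon_1(\pa)$ and $\psi=\dcon_2(\pa)$: your own right-to-left argument shows $\pa\vdash\prfn_\pa^\diamond(\theta_1)\wedge\prfn_\pa^\diamond(\theta_2)\to\dcon_2(\pa)$, so the claimed reduction would yield $\pa\vdash\dcon_1(\pa)\to\dcon_2(\pa)$, directly contradicting Proposition~\ref{prop:strict-hierarchy-slow-consistency}. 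The nesting depth in the conclusion genuinely must grow with $k$, and the correct target is a formula of the shape $\neg\dcon_{k+1}(\pa)$ (a $k{+}1$-fold nesting of slow provability), not the single box $\pr_\pa^\diamond(\bot)$. The underlying obstruction is that a case assumption $\pr_\pa^\diamond(\theta_i)\wedge\neg\theta_i$ is not itself a contradiction, and since $\neg\theta_i$ has unbounded complexity, slow $\Sigma_1$-completeness cannot be applied to it; so the ``case by case'' analysis does not give a uniform nested derivation in the way you suggest.

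The paper avoids this by isolating the key technical statement in its correct form, namely $\pa\vdash\dcon_{n+1}(\pa)\rightarrow\dcon\bigl(\pa+\bigwedge_{i=1,\dots,n}\prfn^\diamond_\pa(\varphi_i)\bigr)$, reducing to it via slow $\Sigma_1$-completeness, and then observing that the known proofs of this implication for ordinary provability --- the syntactic one in Franz\'en/Lindstr\"om, and the Kripke-semantic one in Beklemishev --- use nothing beyond the Hilbert--Bernays conditions (respectively GL-soundness), both of which Rathjen has established for $\pr_\pa^\diamond$. If you want a self-contained argument you should either reproduce one of those proofs in full, verifying the bookkeeping that gives the $n{+}1$ in the exponent, or cite them as the paper does; the current sketch is not a proof.
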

\begin{proof}
Concerning the inclusion $\supseteq$, the contrapositive of the local reflection principle $\prfn_\pa^\diamond(\neg\dcon_n(\pa))$ is the implication
\begin{equation*}
 \dcon_n(\pa)\rightarrow\dcon(\pa+\dcon_n(\pa)).
\end{equation*}
So inductively all iterations $\dcon_n(\pa)$ can be deduced from the local reflection principles. Using slow $\Sigma_1$-completeness (see Lemma \ref{lem:slow-sigma1-completeness}) the inclusion $\subseteq_{\Pi_1}$ is easily reduced to the following claim: For any closed formulas $\varphi_1,\dots,\varphi_n$ we have
\begin{equation*}
 \pa\vdash\dcon_{n+1}(\pa)\rightarrow\dcon\left(\pa+\bigwedge_{i=1,\dots ,n}\prfn^\diamond_\pa(\varphi_i)\right).
\end{equation*}
This claim is established as in the case of ordinary provability: A detailed argument can be found in \cite[Section 14.1]{franzen04}. Since it only uses the Hilbert-Bernays conditions it immediately applies to slow provability. Another syntactical proof of Goryachev's theorem can be found in \cite[Theorem~IV.5]{lindstroem97}. Alternatively, Goryachev's theorem can be established by a semantical argument, using the completeness of G\"odel-L\"ob provability logic for Kripke models: Such a proof can be found in \cite[Lemma~4.2]{beklemishev-iterated-local-reflection}. By \cite[Theorem~5.4]{rathjen-miscellanea-slow-consistency} slow provability models G\"odel-L\"ob logic, so the semantical proof of Goryachev's theorem applies to slow provability as well.
\end{proof}

It is easy to check that the same result holds for the $\dagger$-variant of slow provability. Together with Proposition \ref{prop:con-stronger-omega-daggers} it follows that the usual consistency statement for Peano Arithmetic is not provable in $\pa+\{\prfn_\pa^\dagger(\varphi)\,|\,\text{$\varphi$ a closed formula}\}$. Thus, as opposed to the situation for uniform reflection, the index shift does not make parameter-free slow reflection trivial.\\
The relationship between parameter-free (i.e.~local) reflection for formulas of different complexities has been investigated by Beklemishev \cite{beklemishev-notes-local-reflection} (I am grateful to the referee for this hint): First, it is important to observe that for $n\geq 2$ the theories
\begin{equation*}
 \pa+\prfn_\pa(\Pi_n):\equiv\pa+\{\prfn_\pa(\varphi)\,|\,\text{$\varphi$ a closed $\Pi_n$-formula}\}
\end{equation*}
are \emph{not} finite extensions of Peano Arithmetic (as opposed to the case of uniform reflection). Thus we use $\prfn_\pa(\Pi_n)$ to denote an infinite set of formulas, rather than a single instance of parameter-free reflection. Beklemishev shows that the theories $\pa+\prfn_\pa(\Pi_n)$ and $\pa+\prfn_\pa(\Sigma_n)$ are incomparable if $n\geq 2$. In particular the hierarchy of parameter-free reflection principles is strict. On the other hand, there are stronger conservation results than in the uniform case: For example, the theory $\pa+\prfn_\pa(\Pi_{n+1})$ is conservative over $\pa+\prfn_\pa(\Pi_n)$ for $\Pi_n$-formulas (in case $n\geq 2$). It is straightforward to check that the proofs from \cite{beklemishev-notes-local-reflection} apply to slow parameter-free reflection as well.

\bibliographystyle{alpha}
\bibliography{Slow-Reflection_Freund}

\end{document}